\newtheorem{theorem}{Theorem}[section]
\newtheorem{corollary}[theorem]{Corollary}
\newtheorem{proposition}[theorem]{Proposition}
\newtheorem{lemma}[theorem]{Lemma}
\newtheorem*{theorem*}{Theorem}
\theoremstyle{definition}
\theoremstyle{remark}
\newtheorem{remark}[theorem]{Remark}
\newcommand{\R}{{\mathbb R}}
\renewcommand{\H}{\mathrm{H}}
\newcommand{\Z}{\mathbb Z}
\newcommand{\beq}{\begin{equation}}
\newcommand{\eeq}{\end{equation}}
\renewcommand{\a}{\alpha}
\renewcommand{\b}{\beta}
\newcommand{\f}{\varphi}
\newcommand{\g}{\gamma}
\renewcommand{\l}{\lambda}
\newcommand{\m}{\mu}
\renewcommand{\o}{\omega}
\renewcommand{\t}{\tau}
\renewcommand{\L}{\Lambda}
\newcommand{\psip}{\psi_+}
\newcommand{\psim}{\psi_-}
\newcommand{\tpsi}{\widetilde{\psi}}
\newcommand{\tw}{\widetilde{w}}
\newcommand{\tf}{\widetilde{\varphi}}
\newcommand{\U}{{\mathrm U}}
\newcommand{\SU}{{\mathrm{SU}}}
\newcommand{\SO}{{\mathrm {SO}}}
\newcommand{\Sp}{{\mathrm {Sp}}}
\newcommand{\GL}{{\mathrm {GL}}}
\newcommand{\G}{{\mathrm G}}
\newcommand{\K}{{\mathrm K}}
\newcommand{\Spin}{\mathrm{Spin}}
\newcommand{\W}{\wedge}
\DeclareMathOperator\End{End}
\DeclareMathOperator\Ad{Ad}
\DeclareMathOperator\ad{ad}
\DeclareMathOperator\vol{vol}
\newcommand{\Scal}{{\rm Scal}}
\newcommand{\frg}{\mathfrak{g}}
\newcommand{\frh}{\mathfrak{h}}
\newcommand{\frm}{\mathfrak{m}}
\newcommand{\frn}{\mathfrak{n}}
\newcommand{\frsu}{\mathfrak{su}}
\newcommand{\st}{\ |\ }
\newcommand{\diag}{{\rm diag}}
\newcommand{\sst}{\scriptscriptstyle}
\newcommand{\la}{\langle}
\newcommand{\ra}{\rangle}
\numberwithin{equation}{section}
\title[The twisted G$_{2}$ equation for strong G$_{2}$-structures with torsion]{The twisted G$_{\mathbf2}$ equation 
for strong G$_{\mathbf2}$-structures with torsion}
\author{Anna Fino} 
\address{{\scriptsize Dipartimento di Matematica ``G.~Peano'' \\ Universit\`a degli Studi di Torino\\
Via Carlo Alberto 10\\
10123 Torino\\ Italy\\
 and Department of Mathematics and Statistics\\
 Florida International University\\
 Miami  FL 33199, USA}}
\email{annamaria.fino@unito.it, afino@fiu.edu}
\author{Luc\'ia  Mart\'in-Merch\'an}
\address{{\scriptsize Department of Pure Mathematics \\ University of Waterloo \\
200 University Avenue West\\
N2L 3G1 Waterloo, Ontario \\ Canada}}
\email{lucia.martinmerchan@uwaterloo.ca}
\author{Alberto Raffero}
\address{{\scriptsize Dipartimento di Matematica ``G.~Peano'' \\ Universit\`a degli Studi di Torino\\
Via Carlo Alberto 10\\
10123 Torino\\ Italy}}
\email{alberto.raffero@unito.it}
\subjclass[2020]{53C10, 53C30}
\keywords{$\G_2$-structures with torsion, metric connections with totally skew-symmetric torsion, closed torsion}
\begin{document}
\begin{abstract} 
We discuss general properties of strong G$_2$-structures with torsion and we investigate the 
twisted G$_2$ equation, which represents the G$_2$-analogue of the twisted Calabi-Yau equation for SU$(n)$-structures introduced in \cite{GRST}.
In particular, we show that invariant strong G$_2$-structures with torsion do not occur on compact non-flat solvmanifolds. 
This implies the non-existence of non-trivial solutions to the twisted Calabi-Yau equation on 
compact solvmanifolds of dimensions $4$ and $6$. 
More generally, we prove that a compact, connected homogeneous space admitting invariant strong G$_2$-structures  
with torsion is diffeomorphic either to $S^3 \times T^4$  or to $S^3 \times S^3 \times S^1$, up to a covering, 
and that in both cases solutions to the twisted G$_2$ equation exist. 
Finally, we discuss the behavior of the homogeneous Laplacian coflow for strong G$_2$-structures with torsion on these spaces. 
\end{abstract}
\maketitle

%
\section{Introduction}
On a Riemannian $n$-manifold $(M, g)$, a metric connection $\nabla$ is said to have totally skew-symmetric torsion if the $3$-covariant tensor 
$T$ obtained from its torsion tensor by lowering the contravariant index is skew-symmetric. 
In such a case, $\nabla$ is related to the Levi-Civita connection $\nabla^{g}$ of $g$ as follows
\[
\nabla_X Y  =   \nabla^{g}_X Y +  \frac 12\,g^{-1}T( X , Y), \quad X, Y \in \Gamma (TM). 
\]
Metric connections with totally skew-symmetric torsion are object of interest in 
complex non-K\"ahler geometry \cite{Bis,Gau} and generalized geometry \cite{Gar,FS,Str}, among others. 
Moreover, they naturally occur in theoretical and mathematical physics, e.g.~in Type II string theory and in supergravity theories \cite{Agr,FI,Iva,IP}. 

\smallskip

When $M$ is oriented and the structure group of its frame bundle admits a reduction to a closed subgroup $\G\subset\mathrm{SO}(n)$, 
the existence of a $\G$-connection with totally skew-symmetric torsion can be characterized 
in terms of the intrinsic torsion of the $\G$-structure, see \cite[Prop.~4.1]{FI}. 
In particular, by \cite[Thm.~4.7]{FI}, 
a 7-manifold $M$ with a $\G_2$-structure admits a $\G_2$-connection with totally skew-symmetric torsion $\nabla$ if and only if 
the definite $3$-form $\f$ defining the $\G_2$-structure satisfies the equation 
\begin{equation}\label{IntG2}
d \star \varphi = \theta \wedge \star \varphi,
\end{equation}
where $\theta\in\Omega^1(M)$ is the {\em Lee form} of the $\G_2$-structure 
and $\star$ is the Hodge operator defined by the Riemannian metric $g_{\varphi}$ 
and the orientation induced by $\varphi$. In this case, the connection $\nabla$ is unique.  

In the literature, $\G_2$-structures satisfying \eqref{IntG2} are called  
{\em$\G_2$-structures with torsion} ({\em$\G_2T$-structures} for short) or {\em integrable}, albeit the condition \eqref{IntG2} does not correspond 
to a reduction of the Riemannian holonomy.  
Since $\nabla\f=0$, one also has $\nabla g_\f=0,$ namely $\nabla$ is a metric connection with totally skew-symmetric torsion. 

The torsion $3$-form $T$ of $\nabla$ can be written in terms of $\f$ as follows
\begin{equation}\label{eqn:tor}
T= \frac{1}{6} {\star( d\varphi\W \varphi)}  \varphi - \star d\varphi + \star(\theta \wedge \varphi), 
\end{equation}
where $\theta = -\tfrac13\star(\star d\f\W\f)$. 
In particular, it vanishes identically if and only if the $\G_2$-structure is {\em parallel}, namely $\nabla^{g_\f}\f=0$ or, equivalently, 
$d\f=0$ and $d\star\f=0$ \cite{FerGray}. In analogy with strong K\"ahler with torsion Hermitian structures, 
$\G_2T$-structures with closed torsion $3$-form are called {\em strong} $\G_2T$-structures \cite{ChSw}. 

A special type of $\G_2T$-structures is given by coclosed ones, i.e., those satisfying the condition $d \star \varphi =0$ or, equivalently, 
$\theta=0$. Seven-dimensional manifolds admitting coclosed $\G_2$-structures with parallel torsion $3$-form were studied in \cite{Fr}.

\smallskip

In high-energy physics, $\G_2T$-structures are related to the Hull-Strominger system of equations in dimension seven \cite{CFT,FI,FI2,Iva}. 
Given a $7$-dimensional compact spin manifold $M$ and a principal $\G$-bundle $P \to M,$ 
the {\em Hull-Strominger system} for a $\G_2$-structure $\f$ on $M,$ 
a smooth function $f \in \mathcal{C}^\infty(M)$, a connection $\nabla$ on $TM$ and a connection $A$ on $P$ can be written as follows
\[
\begin{split}
d\varphi \wedge \varphi &= 0, \quad  d\star \varphi=\, -4 df \wedge \star \varphi,\\
F_A \wedge \star \varphi & = 0,\quad R_\nabla \wedge \star \varphi= 0,\\
d\star(- d\varphi -4df \wedge \varphi ) &= \frac{\alpha'}{4}\left(\mathrm{tr}(F_A\wedge F_A) - \mathrm{tr}({R_\nabla \wedge R_\nabla})\right), 
\end{split}
\]
where $F_A$ and $R_\nabla$ are the curvatures of $A$ and $\nabla$, respectively, and $\alpha'$ is a positive constant  
(the square of the string length). 
Thus, any definite $3$-form $\f$ solving this system is a $\G_2T$-structure, it satisfies the additional constraint $d\varphi \wedge \varphi=0$, and 
its Lee form $\theta = -4df$ is exact. Moreover, the torsion $3$-form $T= -\star(d\varphi +4df \wedge \varphi)$ is constrained by the last equation 
of the system, which is related to the anomaly cancellation condition in string theory. 

As shown in \cite{GMPW} (see also \cite[Prop.~3.1]{CFT}), in the special case where $M$ is compact and 
the torsion $3$-form $T$ is closed, any solution to the system
\[
d\varphi \wedge \varphi=0, \qquad
d\star \varphi= -4 df\wedge \star \varphi,  \qquad 
dT= 0, \quad 
\]
must have constant $f$ and $T=0$. Consequently, the $\G_2$-structure must be parallel.  
This is no longer true if one only requires the Lee form to be closed. This leads to the study of the following system of equations
\begin{equation} \label{eqn:strom7}
d\varphi \wedge \varphi=0, \quad
d\star \varphi= \theta \wedge \star \varphi, \quad 
dT= 0, \quad
d\theta=0. 
\end{equation} 
We shall refer to \eqref{eqn:strom7} as the {\em twisted $\G_2$ equation}, 
since it represents the $\G_2$-analogue of the {\em twisted Calabi-Yau equation} for $\SU(n)$-structures introduced in \cite[Def.~2.3]{GRST}.  
The latter is defined as follows for an $\SU(n)$-structure with fundamental $2$-form $\omega$, integrable complex structure $J$,  
complex volume form $\Psi$ and Lee form $\theta_\o = - J\delta\o$  on a $2n$-dimensional manifold
\[
d\Psi = \theta_\o\W\Psi,\quad dd^c\o=0,\quad d\theta_\o = 0. 
\]
By \cite[Prop.~2.10]{GRST}, a compact complex surface admitting solutions to the twisted Calabi-Yau equation must be one of the following:   
a flat torus, a $K3$ surface with a K\"ahler Ricci-flat metric (so that $d\o=0$ and $d\Psi=0$), or a quaternionic Hopf surface. 
Moreover, in \cite[Sect.~2.4]{GRST}, solutions to this equation are shown to exist on a class of non-K\"ahler complex threefolds 
diffeomorphic to $S^3 \times T^3$ whose universal covering is the product 
of the universal covering $\mathbb{C}^2\smallsetminus\{0\}$ of a quaternionic Hopf surface and the complex line $\mathbb{C}$.  
In the context of $(0,2)$ mirror symmetry for compact non-K\"ahler complex manifolds, 
solutions to the twisted Calabi-Yau equation on the homogeneous Hopf surface $\SU(2)\times\U(1)$  were recently used in \cite{AAF} to 
construct the first known examples of $(0,2)$ mirror pairs.

\smallskip 
 
In this paper, we discuss general properties of strong $\G_2T$-structures and we investigate their existence and the existence 
of solutions to the twisted $\G_2$ equation on compact $7$-manifolds. 

In Section \ref{sec:IntClosed}, we determine some constraints imposed by the existence of strong $\G_2T$-structures on a $7$-manifold. 
In particular, we use a constraint on the scalar curvature determined in Proposition \ref{prop:scalar} to prove that 
every left-invariant strong $\G_2T$-structure on a unimodular solvable Lie group is parallel and induces a flat metric  (Corollary \ref{G2TUnimod}). 
This implies the non-existence of compact solvmanifolds admitting invariant non-parallel strong $\G_2T$-structures (Corollary \ref{CorSolvG2T}), 
generalizing an analogous result for compact nilmanifolds previously obtained by Chiossi-Swann \cite{ChSw}.  
Moreover, we show that strong $\G_2T$-structures solving the twisted $\G_2$ equation \eqref{eqn:strom7} 
naturally exist on the product of the torus $T^{7-2n}$, $n=2,3$,  and 
a $2n$-dimensional manifold admitting an $\SU(n)$-structure solving the twisted Calabi-Yau equation (Proposition \ref{TCYG2link}). 
In this case, the Lee form of the strong $\G_2T$-structure coincides with the Lee form $\theta_\o$ of the $\SU(n)$-structure 
and the torsion $3$-form is $T=d^c\o$. 
As a consequence of these results, we get the non-existence of invariant non-trivial solutions to the twisted Calabi-Yau equation 
on compact $2n$-dimensional solvmanifolds, when $n=2,3$  (Proposition \ref{PropTCYSol}). 
This result in the case $n=2$ also follows from the classification result \cite[Prop.~2.10]{GRST} recalled above.

In Section \ref{SecProdS1}, we focus on the special case where $(M,g_\f)$ is the Riemannian product of a $6$-manifold $N$ and the circle $S^1$,   
so that $N$ is naturally endowed with an $\SU(3)$-structure $(\o,\Psi)$. 
In Proposition \ref{prop:product-closedT}, we characterize the strong $\G_2T$ condition \eqref{IntG2} and the twisted $\G_2$ equation \eqref{eqn:strom7} 
in terms of equations for $(\o,\Psi)$ in the case where the Lee form of the $\G_2T$-structure is proportional to a global closed $1$-form on $S^1$. 
Notice that this situation is complementary to the one considered in Proposition \ref{TCYG2link}, where the Lee form of the strong $\G_2T$-structure 
coincides with the Lee form of the $\SU(3)$-structure on the $6$-manifold.  
This characterization implies that the third Betti number of $N$ must be positive when $N$ is compact 
and $N\times S^1$ admits a strong $\G_2T$-structure fulfilling the previous requirements (Corollary \ref{Corb3N}).  
We also determine sufficient conditions on the $\SU(3)$-structure that ensure the existence of three different $\G_2$-structures on $N\times S^1$ sharing  
the same metric and torsion $3$-form: a strong $\G_2T$-structure, a strong $\G_2T$-structure solving the twisted $\G_2$ equation and a 
coclosed $\G_2$-structure (Proposition \ref{prop:product-str}). 
In this case, the $\G_2$-connection with totally skew-symmetric torsion associated with these $\G_2$-structures has zero Ricci tensor.

We then focus on the homogeneous setting. 
Recall that a $7$-manifold $M$ with a $\G_2$-structure $\f$ is said to be {\em homogeneous} if there exists a Lie group 
$\G$ acting transitively on $M$ and leaving $\f$ invariant. In such a case, $M$ is $\G$-equivariantly diffeomorphic to a quotient $\G/\H$, 
for some closed subgroup $\H\subset\G$.  
Compact homogeneous spaces admitting invariant $\G_2$-structures were classified in \cite{LM,Reid}. 
By \cite{Reid}, invariant coclosed $\G_2$-structures always occur on such spaces. 

Using the classification result of \cite{Reid} and our results on strong $\G_2T$-structures discussed in Sections \ref{sec:IntClosed} and \ref{SecProdS1}, 
we prove the following. 

\begin{theorem} 
Let $M$ be a seven-dimensional, compact, connected homogeneous space for the almost effective action of a connected Lie group $\G$.  
If $M$ admits invariant strong $\G_2T$-structures, then it is diffeomorphic either to $S^3 \times T^4$ or to $S^3 \times S^3 \times S^1$, 
up to a covering. 
\end{theorem}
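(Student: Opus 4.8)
The plan is to combine the algebraic/curvature obstructions for strong $\G_2T$-structures on homogeneous spaces established earlier with the explicit classification of compact homogeneous $7$-manifolds carrying invariant $\G_2$-structures from \cite{Reid,LM}. First I would reduce to the case where $\G$ is the full connected isometry group of the invariant metric $g_\f$ (or at least contains it up to cover), so that $M=\G/\H$ with $\H$ compact; invariance of $\f$ forces $g_\f$ invariant, and one may pass to a transitive subgroup acting almost effectively. Then I would invoke Proposition \ref{prop:scalar}: the scalar curvature of $g_\f$ for a strong $\G_2T$-structure is constrained (in the homogeneous case the torsion $3$-form $T$ is invariant, hence its $L^2$-norm and the integral of $\Scal$ are controlled), and in fact Corollary \ref{G2TUnimod} already kills the unimodular-solvable factors. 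The key structural input is that a compact homogeneous space is (up to finite cover) a product of a torus and a compact semisimple part, and the semisimple part is highly restricted in dimension $7$.

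The core of the argument is a case-by-case walk through Reid's list of compact, connected, almost effective homogeneous $7$-manifolds $\G/\H$ admitting an invariant $\G_2$-structure. For each entry I would compute (or read off) the space of invariant $3$-forms and the induced invariant metrics, then test whether any invariant definite $3$-form can satisfy \eqref{IntG2} together with $dT=0$. The obstructions to apply are: (i) the scalar-curvature constraint of Proposition \ref{prop:scalar}, which forces the metric to be ``balanced'' in a way incompatible with positive Einstein-type curvature of compact semisimple groups unless the semisimple part is exactly $\SU(2)$ (giving an $S^3$ factor) or $\SU(2)\times\SU(2)$; (ii) the unimodular-solvable vanishing result Corollary \ref{G2TUnimod}, which eliminates purely solvable models and flat-torus-only behaviour on solvable factors; and (iii) for the borderline cases, a direct check that the strong condition $dT=0$ is solvable. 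This should leave precisely the two diffeomorphism types $S^3\times T^4$ and $S^3\times S^3\times S^1$ (the latter realized e.g.\ by $\SU(2)^3/\Delta\SU(2)$-type or product presentations), each of which survives because $S^3=\SU(2)$ is the unique compact simple group of dimension $\le 6$ whose bi-invariant metric admits a compatible strong $\G_2T$ extension after crossing with a flat factor, consistent with Proposition \ref{TCYG2link} and Proposition \ref{prop:product-str}.

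More concretely, I would organize Reid's classification by the dimension of the semisimple part $\frak{s}$ of $\Lie(\G)$: if $\dim\frak s=0$ the group is solvable and Corollary \ref{CorSolvG2T} forces flatness, so $M$ is a torus quotient — but a compact flat $7$-manifold is parallel, not a genuine strong $\G_2T$-structure unless trivial, and more importantly a torus carries no transitive reductive non-abelian action, so the only homogeneous possibility with a compact semisimple factor present must be treated; if $\dim\frak s=3$, then $\frak s\cong\frak{su}(2)$ and the orbit is $S^3$ or $\SO(3)$, the remaining $4$ dimensions form an abelian/torus factor after using the curvature constraint to forbid mixing, giving $S^3\times T^4$; if $\dim\frak s=6$, then $\frak s\cong\frak{su}(2)\oplus\frak{su}(2)$ (the only $6$-dimensional compact semisimple Lie algebra), forcing $M$ to be $S^3\times S^3\times S^1$ up to cover; and $\dim\frak s\ge 7$ with a $7$-dimensional homogeneous space leaves only $\SO(5)/\SO(3)$, $\SU(3)/T^2$-type and a handful of isotropy-irreducible or rank-one symmetric models, each of which I would eliminate by showing the invariant $3$-forms either cannot be definite or cannot satisfy \eqref{IntG2} with closed $T$ — typically because $\Scal$ would be forced positive while Proposition \ref{prop:scalar} bounds it above by a non-positive quantity, or because the isotropy representation admits no invariant $1$-form to serve as the Lee form $\theta$ yet $d\star\f\ne 0$.

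\emph{Main obstacle.} The hard part will be the uniform treatment of the ``large semisimple'' cases in Reid's list: there one must actually parametrize the finite-dimensional space of invariant $3$-forms, identify which are definite, and then verify that \eqref{IntG2} and $dT=0$ have no common solution — this is where an explicit structure-constant computation is unavoidable and where one must be careful that $\G$ may act with a kernel (almost effective, not effective) or that different presentations $\G/\H$ of the same manifold give different invariant-form spaces, so one must check all presentations. The curvature obstruction of Proposition \ref{prop:scalar} does most of the work, but the residual isotropy-irreducible spaces (no invariant $1$-forms, so $\theta=0$ is forced, reducing to the \emph{coclosed-with-closed-torsion} case) require the separate input that such structures are then parallel — here I would cite the Ambrose–Singer / de Rham-type rigidity already implicit in the discussion around \eqref{eqn:tor} and Corollary \ref{G2TUnimod}, upgraded from the solvable to the semisimple setting via the fact that a compact homogeneous space with an invariant parallel $\G_2$-structure has restricted holonomy in $\G_2$ and hence Ricci-flat, which by Bochner forces the semisimple part to be trivial — a contradiction with $\dim\frak s\ge7$.
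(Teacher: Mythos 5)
Your overall skeleton (run through Reid's classification of compact homogeneous $7$-spaces with invariant $\G_2$-structures and test each entry) is the same as the paper's, but the obstructions you propose to do the actual work do not function as claimed, and this leaves genuine gaps. First, Proposition \ref{prop:scalar} gives $\Scal(g_\f)=10\,\delta\tau_1+\tfrac{49}{24}\tau_0^2+24|\tau_1|^2$, i.e.\ essentially a \emph{lower} bound by non-negative quantities (plus a divergence term); it does not ``bound $\Scal$ above by a non-positive quantity'' and it creates no tension with the positive curvature of compact semisimple groups. Indeed the Aloff--Wallach spaces and $S^7=\Sp(2)/\Sp(1)$ \emph{do} carry invariant $\G_2T$-structures with non-zero (and non-closed) Lee form (Proposition \ref{PropAWpq} and Remark \ref{RemEG2T}), so no curvature inequality at the $\G_2T$ level eliminates them; what kills them is that the torsion $3$-form is never closed, and establishing this requires the explicit parametrization of invariant definite $3$- and $4$-forms and of their torsion (Propositions \ref{PropAWpq} and \ref{PropNonExAW11S7}). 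Your plan acknowledges that some structure-constant computation is needed, but it assigns the decisive role to a curvature mechanism that does not exist, and it overlooks that on the simply connected entries the vanishing of $b_3$ only forces a \emph{closed} Lee form to vanish — it does not exclude strong $\G_2T$-structures with non-closed Lee form, which is precisely the hard case.

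Second, your treatment of the coclosed ($\theta=0$) cases rests on the claim that coclosed with closed torsion forces the structure to be parallel; this is false, as the paper's own example on $\SU(2)\times\SU(2)\times\U(1)$ shows (Proposition \ref{S3S3S1G2T} produces a non-parallel coclosed $\G_2$-structure with closed, indeed harmonic, torsion). The correct tool, which you never invoke, is Proposition \ref{lem:co-closed} (and its product refinement, Corollary \ref{Corb3N}): for a coclosed structure with constant $\tau_0$ the torsion is harmonic, so compactness forces $b_3>0$, which rules out the isotropy-irreducible spaces, the spaces where every invariant $4$-form is closed (Proposition \ref{prop:10-12}), and the reducible products over $F(1,2)$, $\mathbb{C}P^3$, $S^6$ and $S^5$. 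Finally, the reducible case $V^{4,2}\times T^2$ cannot be dismissed by any of your stated obstructions (it has $b_3(V^{4,2})=1$ and a non-abelian but ``small'' semisimple part $\frsu(2)\oplus\frsu(2)$); excluding it requires the reduction to $\SU(3)$-structures of Lemma \ref{lem:reduction-su(3)} together with the explicit double half-flat analysis, a step entirely absent from your plan. So the proposal, as written, would not close the argument on either the irreducible entries with non-closed Lee form or on $V^{4,2}\times T^2$.
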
 

The proof is obtained studying separately the case where the homogeneous space $M=\G/\H$  is irreducible, namely it 
is not covered by the product of lower-dimensional homogeneous spaces (Section \ref{sec:Irreducible}),   
and the case where $M=\G/\H$ is reducible (Section \ref{sec:Reducible}). 
In the former case, we show that invariant $\G_2T$-structures occur on various spaces, but none of them has closed torsion. 
As a byproduct of our discussion, we describe the space of invariant $\G_2T$-structures on all Aloff-Wallach spaces $N^{p,q}= \SU(3)/\U(1)_{p,q}$, 
where $p,q\in\Z$ are the weights of the action of $\U(1)$ on $\SU(3)$, and on the $7$-sphere $S^7= \Sp(2)/\Sp(1)$. 

On the spaces $S^3 \times T^4\cong\SU(2)\times\U(1)^4$  and  $S^3 \times S^3 \times S^1\cong \SU(2)\times \SU(2)\times\U(1)$, 
we also obtain homogeneous examples of strong $\G_2T$-structures 
solving the twisted $\G_2$ equation \eqref{eqn:strom7} and whose associated $\G_2$-connection with totally skew symmetric torsion is flat 
(see propositions \ref{S3T4G2T} and \ref{S3S3S1G2T}). 
In the first case, the example is obtained starting with the homogeneous Hopf manifold $\SU(2)\times \U(1) \cong S^3\times S^1$ endowed 
with a left-invariant $\SU(2)$-structure solving the twisted Calabi-Yau equation and applying the construction of Proposition \ref{TCYG2link}. 
In the second case, we construct a left-invariant $\SU(3)$-structure on the homogeneous space $\SU(2)\times\SU(2)\cong S^3\times S^3$ 
which does not solve the twisted Calabi-Yau equation, but satisfies the conditions determined in Proposition \ref{prop:product-closedT} and 
thus induces a left-invariant strong  $\G_2T$-structure on $\SU(2)\times\SU(2)\times\U(1)$. 
Since the $\SU(3)$-structure satisfies the hypothesis of Proposition \ref{prop:product-str}, too, 
the existence of a solution to the twisted $\G_2$ equation follows. 
It is an open problem to see whether there exist compact $7$-manifolds admitting a strong $\G_2T$-structure inducing a non-flat 
$\G_2$-connection with totally skew-symmetric torsion.

Finally, in Section \ref{CoflowSect}, we consider the homogeneous Laplacian coflow for strong $\G_2T$-structures
on $\SU(2)\times\U(1)^4$ and on $\SU(2)\times\SU(2)\times\U(1)$. 
The Laplacian coflow for coclosed $\G_2$-structures was introduced in \cite{KMT} as 
an analogue of Bryant's Laplacian flow for closed $\G_2$-structures \cite{Bry}, 
and it prescribes the evolution of a one-parameter family of coclosed $\G_2$-structures $\psi_t = \star_t\f_t$, i.e., $d\psi_t=0$, 
in the direction of its Hodge Laplacian 
\begin{equation}\label{CoflowIntro}
\partial_t\psi_t = \Delta_{\psi(t)}\psi(t). 
\end{equation}
In this equation, $\Delta_\psi = \delta_{g_\psi}d+d\delta_{g_\psi}$ denotes the Hodge Laplacian 
determined by the metric $g_\psi$ induced by the definite $4$-form $\psi$ and a given orientation that is fixed throughout the flow.  
Although it is currently not known whether this flow is well-posed on compact manifolds, solutions are known to exist in some specific cases, 
see, for instance, \cite{BF,KMT,KL,LSS,PS}. 
Since the $\G_2T$ condition \eqref{IntG2} is an equation for the $4$-form $\psi=\star\f$ generalizing the condition $d\psi=0$, 
it seems worth understanding whether the flow \eqref{CoflowIntro} represents a useful tool to study the larger class of $\G_2T$-structures. 
Here, as a preliminary step in this direction, we study the Laplacian coflow starting at the examples of strong $\G_2T$-structures obtained in 
Section \ref{sec:Reducible}. 
It turns out that the behavior of the flow is similar in both situations: the solution $\psi(t)$ exists (at least) for all positive times 
and it defines a strong $\G_2T$-structure.   
Moreover, the $7$-manifold splits as the Riemannian product of an associative submanifold $Y^3$ and a coassociative submanifold $X^4$.  
In detail, $Y^3$ is either a $3$-torus or a $3$-sphere that 
is calibrated by $\f(t)=\star_t\psi(t)$ and whose volume increases along the flow, 
while $X^4=\SU(2)\times\U(1)$ is calibrated by $\psi(t)$ and its volume is fixed along the flow. 
These results provide some insights on the problem and pave the way towards a more detailed study of the Laplacian coflow for (strong) 
$\G_2T$-structures.

%

\section{Definite forms in dimension seven}\label{SecDefForm7}

Let $V$ be a real vector space of dimension $n$ and 
consider the action of the general linear group $\GL(V )$ on the space $\Lambda^k V^*$.  
An element  $\rho \in \Lambda^k V^*$ is said to be {\em stable} if its $\GL(V )$-orbit is open in  $\Lambda^kV^*$.   

Let us focus on the case $\dim V = 7$. 
Every 3-form $\phi\in\Lambda^3V^*$ gives rise to a symmetric bilinear map 
\[
b_\phi:V \times V \rightarrow\Lambda^7V^*,\quad (v,w) \mapsto \frac16\, \iota_v\phi\W\iota_w\phi\W\phi.
\] 
By \cite{Hitchin}, the $\GL(V)$-orbit of $\phi$ is open in $\Lambda^3V^*$ if and only if $\det(b_\phi)^{1/9}\in\Lambda^7V^*$ is nonzero. 

A stable 3-form $\phi$ on $V$ is said to be {\em definite} if the symmetric bilinear form 
\[
g_\phi:V\times V \rightarrow\R,\quad g_\phi\coloneqq \det(b_\phi)^{-1/9}b_\phi,
\] 
is positive definite. The $\GL(V)$-stabilizer of a definite $3$-form is isomorphic to the exceptional Lie group $\G_2$. 
A definite $3$-form $\phi$ induces both an inner product $g_\phi$ and an orientation on $V,$ and thus a Hodge operator $\star$.  
In particular, one has $dV_\phi \coloneqq \vol_{g_\phi} = \det(b_\phi)^{1/9}$. 
Moreover, there exists a $g_\phi$-orthonormal basis $\mathcal{B}=(e_1,\ldots,e_7)$ of $V$ with dual basis $\mathcal{B}^*=(e^1,\ldots,e^7)$ 
such that 
\[
\phi = e^{123}+ e^1\wedge (e^{45}+e^{67}) + e^2\wedge (e^{46}-e^{57}) - e^3\wedge (e^{47}+e^{56}), 
\]
where $e^{ij\cdots}$ is a shortening for the wedge product of covectors $e^i \W e^j \W\cdots$. 
Both $\mathcal{B}$ and $\mathcal{B}^*$ are usually called {\em adapted bases} to $\phi$. 

\smallskip
Let $\Lambda^3_+V^*$ denote the set of definite 3-forms on $V.$ This is an open subset of $\Lambda^3V^*$. 
The map 
\[
\Lambda^3_+V^* \rightarrow \Lambda^4V^*,\quad \phi \mapsto \star\phi,
\]
is a double covering onto an open subset $\Lambda^4_+V^*\subset \Lambda^4V^*$ of stable $4$-forms.  
The forms belonging to $\Lambda^4_+V^*$ are called {\em definite}, too, and their $\GL(V)$-stabilizer is isomorphic to 
$\G_2 \cup \left\{-\mathrm{Id}_V\cdot \G_2\right\}$. 
In particular, every definite $4$-form gives rise to an inner product on $V,$ but not to an orientation. 
 
 \smallskip
 
The inner product determined by a definite $4$-form can be computed as follows. 
Choosing a volume form $\Omega$ on $V,$ one has an isomorphism  $\Lambda^4V^* \cong \L^3 V \otimes \L^7 V^*$ that identifies 
$\rho\in\Lambda^4V^*$ with the element $\hat{\rho}\otimes \Omega\in \L^3 V \otimes \L^7 V^*$ such that $\iota_{\hat{\rho}}\Omega=\rho$. 
Applying the construction of the previous case, one obtains a symmetric bilinear map  
\[
B_\rho : V^* \times V^* \to \L^7 V \otimes (\Lambda^7 V^*)^{\otimes 3}\cong (\L^7 V^*)^{\otimes 2},\quad 
(\a, \b) \mapsto \left(\frac16\iota_{\a}  \hat\rho \wedge  \iota_{\beta} \hat\rho \wedge \hat\rho\right)\otimes\Omega^{\otimes3}, 
\]
and one has that $\rho$ is stable if and only if $|\det(B_{\rho})|^{1/12} \in \L^7 V^*$ is nonzero. 
Moreover, $\rho$ is definite if the symmetric bilinear form 
\[
\overline{\beta}_\rho:V^*\times V^*\rightarrow \R,\quad \overline{\beta}_\rho\coloneqq |\det(B_{\rho})|^{-1/6} B_{\rho},
\]
is definite. The inner product induced by $\rho$ is then $\overline{g}_\rho = \overline{\beta}_\rho$ if $\overline{\beta}_\rho$ is positive definite, and 
$\overline{g}_\rho = -\overline{\beta}_\rho$ otherwise. 
When $\phi\in\Lambda^3_+V^*$, so that $\star\phi\in\Lambda^4_+V^*$, then one has $\overline{g}_{\star\phi} = g_\phi^{-1}$.

\smallskip

A $7$-manifold $M$ admits $\G_2$-structures if and only if it is orientable and spin. 
In such a case, it is possible to consider the open subbundle $\Lambda^3_+T^*M$ of $\Lambda^3T^*M$ 
whose fibre over each point $x$ of $M$ is the set $\Lambda^3_+T_x^*M$ of definite $3$-forms on $T_xM$. 
A $\G_2$-structure on $M$ is then given by a definite $3$-form $\f\in\Omega^3_+(M)\coloneqq \Gamma(\Lambda^3_+T^*M)$. 
As explained above, this $3$-form gives rise  to a Riemannian metric $g_\f$ and an orientation on $M.$

%

\section{Strong $\G_2T$-structures}\label{sec:IntClosed}

Let $(M,\f)$ be a $7$-manifold with a $\G_2$-structure.  
It is well-known, see e.g.~\cite{Bry}, that the following $g_\f$-orthogonal decompositions hold
\[
\Omega^2(M) = \Omega^2_7 \oplus\Omega^2_{14},\quad
\Omega^3(M) = \mathcal{C}^\infty(M)\,\f \oplus \Omega^3_7 \oplus \Omega^3_{27},
\]
where 
\[
\begin{split}
\Omega^2_7 		&= \left\{\sigma\in\Omega^2(M) \st \sigma\W\f=2\star\sigma \right\},\quad
\Omega^2_{14}		= \left\{\sigma\in\Omega^2(M) \st \sigma\W\f=-\star\sigma \right\},\\
\Omega^3_7	& = \left\{\star(\alpha\W\f) \st \alpha\in\Omega^1(M)\right\},\quad
\Omega^3_{27}	= \left\{\rho\in\Omega^3(M) \st \rho\W\f=0,~\rho\W\star\f=0\right\}. 
\end{split}
\]
The Hodge operator allows one to obtain the analogous decompositions for $\Omega^k(M) = \star\Omega^{7-k}(M)$, for $k=4,5$. 
According to these decompositions and by \cite[Prop.~1]{Bry}, there exist unique differential forms  $\tau_k \in \Omega^k(M)$, $k=0,1,2,3$, such that
\begin{equation}\label{TorsionForms}
\begin{split}
d\varphi 		&= \t_0 \star \varphi + 3\, \t_1 \wedge \varphi + \star \t_3, \\
d\star \varphi	&= 4\, \t_1 \wedge \star \varphi + \star \t_2, 
\end{split}
\end{equation}
where $\tau_2\in\Omega^2_{14}$ and $\tau_3\in\Omega^3_{27}$. 
The forms $\tau_0$, $\tau_1$, $\tau_2$ and $\tau_3$ are the {\em intrinsic torsion forms} of the $\G_2$-structure $\f$.  
These forms are all zero if and only if the intrinsic torsion of the $\G_2$-structure vanishes, namely if and only if $\f$ is both closed and coclosed.  
This is equivalent to $\f$ being parallel with respect to the Levi Civita connection of $g_\f$ \cite{FerGray}. 
When this happens, the Riemannian metric $g_\f$ has zero Ricci tensor, and the $\G_2$-structure is said to be {\em torsion-free} or {\em parallel}.

Comparing the second equation in \eqref{TorsionForms} with \eqref{IntG2}, one has that  $\f$ is a $\G_2T$-structure if and only if $\tau_2=0$. 
In such a case, the Lee form is $\theta = 4\tau_1$ and 
the expression of the torsion $3$-form $T$ given in \eqref{eqn:tor} can be rewritten in terms of the  intrinsic torsion forms as follows 
\begin{equation}\label{Ttorforms}
T= \frac{1}{6} \tau_0 \, \varphi +\star(\tau_1\W\f) - \tau_3.
\end{equation}
This is the decomposition of $T$ with respect to the $g_\f$-orthogonal splitting of $\Omega^3(M)$. 
In particular, $T$ is zero if and only if $\tau_0$, $\tau_1$ and $\tau_3$ are all zero, namely if and only if $\f$ is parallel. 

As observed in \cite{FI2}, using the general identity 
\begin{equation}\label{1formid}
\star(\alpha\W\f)\W\f = -4\star\alpha,\quad \forall~\alpha\in\Omega^1(M), 
\end{equation}
one obtains
\begin{equation}\label{LeeT}
\theta = 4\tau_1 =-\star(T\W\f).
\end{equation}
Notice that the Lee form is zero if and only if the $\G_2$-structure $\f$ is coclosed.  

From now on, we shall restrict our attention to strong $\G_2T$-structures, namely to the case where $dT=0$.

\begin{proposition}\label{prop:scalar}
Let $\f$ be a strong $\G_2T$-structure. Then, the scalar curvature of $g_\f$ is 
\[
\Scal(g_\f)= 10 \delta \t_1 + \frac{49}{24}\t_0^2 + 24|\t_1|^2.
\]

\end{proposition}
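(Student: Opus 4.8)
The plan is to combine the general formula for the Riemannian scalar curvature of a $\G_2$-structure in terms of its intrinsic torsion forms with the extra information carried by $dT=0$. Recall from \cite{Bry} that, with the conventions of \eqref{TorsionForms}, the scalar curvature of an arbitrary $\G_2$-structure is
\[
\Scal(g_\f) = 12\,\delta\t_1 + \tfrac{21}{8}\,\t_0^2 + 30\,|\t_1|^2 - \tfrac12|\t_2|^2 - \tfrac12|\t_3|^2 .
\]
Since $\f$ is a $\G_2T$-structure, $\t_2=0$ by the discussion following \eqref{TorsionForms}; hence the only thing left to do is to rewrite $|\t_3|^2$ using the hypothesis $dT=0$.

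The key step will be the pointwise identity
\[
|\t_3|^2 = 4\,\delta\t_1 + \tfrac{7}{6}\,\t_0^2 + 12\,|\t_1|^2 ,
\]
which I would extract by evaluating the $7$-form $dT\W\f$ in two ways. On the one hand $dT=0$ gives $dT\W\f=0$. On the other hand, from $d(T\W\f)=dT\W\f-T\W d\f$ one has $dT\W\f=d(T\W\f)+T\W d\f$, and the two summands are computed separately. For the first, \eqref{LeeT} gives $T\W\f=-\star\theta=-4\star\t_1$, so $d(T\W\f)=-4\,d\!\star\!\t_1$, which equals $4\,(\delta\t_1)\,dV_\f$ since $\delta\eta=-\star d\star\eta$ on $1$-forms in dimension $7$. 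For the second, substitute \eqref{Ttorforms} for $T$ and the first line of \eqref{TorsionForms} for $d\f$ and expand. Of the nine resulting terms, all the off-diagonal ones vanish: the ones of type $\f\W(\t_1\W\f)$ because $\f\W\f=0$; the one of type $\t_3\W(\t_1\W\f)$ because $\t_3\in\Omega^3_{27}$ satisfies $\t_3\W\f=0$; and the remaining ones because---using $\alpha\W\beta=\langle\alpha,\star\beta\rangle\,dV_\f$ for complementary degrees---they pair forms lying in distinct summands of the $g_\f$-orthogonal decomposition $\Omega^3(M)=\mathcal{C}^\infty(M)\,\f\oplus\Omega^3_7\oplus\Omega^3_{27}$. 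Using $|\f|^2=7$, the identity $|\alpha\W\f|^2=4|\alpha|^2$ for $\alpha\in\Omega^1(M)$ (a consequence of \eqref{1formid}), and the fact that $\star$ is an isometry, the three surviving terms combine to
\[
T\W d\f=\big(\tfrac{7}{6}\,\t_0^2+12\,|\t_1|^2-|\t_3|^2\big)\,dV_\f .
\]
Comparing the two evaluations of $dT\W\f$ yields the displayed identity for $|\t_3|^2$.

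To conclude, substitute $\t_2=0$ and $|\t_3|^2=4\delta\t_1+\tfrac{7}{6}\t_0^2+12|\t_1|^2$ into the formula from \cite{Bry}:
\[
\Scal(g_\f)=12\delta\t_1+\tfrac{21}{8}\t_0^2+30|\t_1|^2-\tfrac12\big(4\delta\t_1+\tfrac{7}{6}\t_0^2+12|\t_1|^2\big)=10\,\delta\t_1+\tfrac{49}{24}\,\t_0^2+24\,|\t_1|^2 ,
\]
which is the assertion. I expect the only genuinely delicate point to be the bookkeeping of constants---matching the normalization of the scalar-curvature formula of \cite{Bry} to \eqref{TorsionForms}, applying the codifferential sign convention consistently, and tracking the coefficient of $dV_\f$ in each surviving term of $T\W d\f$. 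As an alternative to invoking \cite{Bry}, one could instead start from the Friedrich--Ivanov relation \cite{FI} between $\Scal(g_\f)$, the scalar curvature of the characteristic connection $\nabla$, and $|T|^2=\tfrac{7}{36}\t_0^2+4|\t_1|^2+|\t_3|^2$ (the last expression being \eqref{Ttorforms} read as an orthogonal decomposition), and again use $dT=0$ via the same computation of $dT\W\f$; the effort is comparable.
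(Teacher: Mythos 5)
Your proposal is correct and follows essentially the same route as the paper: the key identity $|\t_3|^2 = 4\delta\t_1 + \tfrac76\t_0^2 + 12|\t_1|^2$ is obtained, exactly as in the paper's proof, by using $dT=0$ together with \eqref{LeeT}, \eqref{Ttorforms}, \eqref{TorsionForms} and \eqref{1formid} to evaluate $T\W d\f = -d(T\W\f)$, and is then substituted into Bryant's scalar curvature formula with $\t_2=0$. All constants and signs check out, so there is nothing to add.
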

\begin{proof}
We first prove the identity 
\begin{equation}\label{eqnormtau3}
|\tau_3|^2 = \frac76\,\tau_0^2+12|\tau_1|^2+4\delta\tau_1.
\end{equation}
Since $\theta = 4\tau_1$, from equation \eqref{LeeT} we obtain
\[
4\,d\star\tau_1 	= -d\left( T\W\f\right) = T\W d\f 
			= \left(\frac76\,\tau_0^2+12|\tau_1|^2 -|\tau_3|^2 \right)dV_\f,
\]
where we used the expression of $T$ given in \eqref{Ttorforms}, the expression of $d\f$ given in \eqref{TorsionForms}, the identity \eqref{1formid}, 
and the identity $|\tau_1\W\f|^2 = 4|\tau_1|^2$. 
The identity \eqref{eqnormtau3} then follows. 

According to \cite[eq.~(4.28)]{Bry}, the scalar curvature of the metric induced by a $\G_2T$-structure is
\[
\Scal(g_\f) = 12 \delta \t_1 + \frac{21}{8}\t_0^2 + 30 |\t_1|^2 - \frac{1}{2}  |\t_3|^2.
\]
Taking into account the formula \eqref{eqnormtau3}, the expression of $\Scal(g_\f)$ given in the statement follows.  
\end{proof}

The previous proposition provides a constraint for the existence of (non-parallel) strong $\G_2T$-structures on certain $7$-manifolds. 
For instance, we have the following. 
\begin{corollary}\label{G2TUnimod}
Let $\f$ be a left-invariant strong $\G_2T$-structure on a $7$-dimensional unimodular Lie group $\G$. 
Then, $g_\f$ has non-negative scalar curvature. Moreover, $g_\f$ is flat whenever $\G$ is solvable or $\Scal(g_\f)=0$. 
When this happens, the $\G_2$-structure $\f$ is parallel. 
\end{corollary}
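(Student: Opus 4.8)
The plan is to exploit Proposition~\ref{prop:scalar} together with two classical facts about left-invariant metrics on unimodular Lie groups: first, by Milnor's observation, the scalar curvature of a left-invariant metric on a unimodular Lie group satisfies $\Scal(g_\f)\le 0$, with equality iff the metric is flat; second, for a left-invariant form the codifferential of a left-invariant $1$-form integrates to zero in the sense that the function $\delta\tau_1$ is itself constant (it is left-invariant), and on a unimodular group this constant is forced to vanish because $\int_{\G}\delta\tau_1\,dV_\f=0$ on any lattice quotient, or more directly because $\delta$ of a left-invariant form on a unimodular Lie group has zero ``divergence part''. Actually the cleanest route is: on a unimodular Lie group the operator $\delta$ applied to a left-invariant $1$-form yields a left-invariant function, hence a constant, and integrating $\delta\tau_1=d^*\tau_1$ against the bi-invariant volume (which exists by unimodularity) over the group or a compact quotient shows this constant is $0$; equivalently, $\delta\tau_1$ equals $-\tr(\ad_{\tau_1^\sharp})$ up to sign, which vanishes by unimodularity. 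So $\delta\tau_1=0$ in our situation.

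With $\delta\tau_1=0$, Proposition~\ref{prop:scalar} gives $\Scal(g_\f)=\tfrac{49}{24}\tau_0^2+24|\tau_1|^2\ge 0$. Combining with Milnor's inequality $\Scal(g_\f)\le 0$ for unimodular groups forces $\Scal(g_\f)=0$, whence $g_\f$ is flat, and moreover $\tau_0=0$ and $\tau_1=0$. I would then feed this back in: from equation~\eqref{eqnormtau3} with $\tau_0=\tau_1=0$ and $\delta\tau_1=0$ we get $|\tau_3|^2=0$, so $\tau_3=0$ as well. Since $\tau_2=0$ by the strong $\G_2T$ assumption, all four intrinsic torsion forms vanish, so $\f$ is parallel. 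This handles the case $\Scal(g_\f)=0$ and the case $\G$ solvable simultaneously, because for solvable unimodular $\G$ the metric is automatically flat by Milnor (flatness and vanishing scalar curvature coincide here), and flatness again forces $\Scal=0$, so the same conclusion applies.

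Let me reorganize the logical flow for the writeup. Step one: observe $\delta\tau_1=0$ for a left-invariant $1$-form on a unimodular group. Step two: deduce $\Scal(g_\f)=\tfrac{49}{24}\tau_0^2+24|\tau_1|^2\ge 0$ from Proposition~\ref{prop:scalar}; this is the first assertion. Step three: invoke the fact that a left-invariant metric on a unimodular (resp.\ solvable unimodular, where it is automatic) Lie group with $\Scal\ge 0$ is flat precisely when $\Scal=0$, so under the stated hypotheses $\Scal(g_\f)=0$ and $g_\f$ is flat. Step four: from $\Scal(g_\f)=0$ conclude $\tau_0=\tau_1=0$, then use \eqref{eqnormtau3} to get $\tau_3=0$, and recall $\tau_2=0$; hence $\f$ is parallel.

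The main obstacle is really just a matter of citing the right form of the Lie-theoretic input: the statement that $\delta\tau_1=0$ for left-invariant $\tau_1$ on a unimodular group, and the statement that a left-invariant metric on a unimodular Lie group has $\Scal\le0$ with equality iff flat (the solvable case being a theorem of Milnor that such a metric is flat). Neither is deep, but one must phrase them correctly — in particular, for the scalar-curvature inequality one should reference the standard computation $\Scal=-\tfrac14\sum|[\cdot,\cdot]|^2-\tfrac12\sum\langle[\cdot,\cdot],\cdot\rangle^2$-type formula valid precisely under unimodularity — and one must be careful that the ``solvable'' hypothesis genuinely yields flatness rather than merely $\Scal\le 0$. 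Everything else is an algebraic bootstrap through the two displayed identities already in the text.
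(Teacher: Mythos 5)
Your opening steps are sound and agree with the paper: for a left-invariant $1$-form on a unimodular group one has $\delta\tau_1=0$ (your trace identity $\delta\tau_1=\pm\tr(\ad_{\tau_1^\sharp})$ is correct and is equivalent to the paper's observation that every left-invariant $6$-form is closed), hence Proposition \ref{prop:scalar} gives $\Scal(g_\f)=\tfrac{49}{24}\tau_0^2+24|\tau_1|^2\geq 0$; and the bootstrap $\Scal(g_\f)=0\Rightarrow\tau_0=\tau_1=0\Rightarrow\tau_3=0$ via \eqref{eqnormtau3}, together with $\tau_2=0$, correctly yields that $\f$ is parallel. The Lie-theoretic input you then lean on, however, is false: it is not true that a left-invariant metric on a unimodular Lie group satisfies $\Scal\leq 0$ with equality iff flat. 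Bi-invariant metrics on $\SU(2)$ have positive scalar curvature, and indeed the paper's own examples (Propositions \ref{S3T4G2T} and \ref{S3S3S1G2T}) are left-invariant strong $\G_2T$-structures on the unimodular groups $\SU(2)\times\U(1)^4$ and $\SU(2)^2\times\U(1)$ whose metrics are non-flat with positive scalar curvature; if your inequality held, your argument would force every left-invariant strong $\G_2T$-structure on a unimodular group to be flat and parallel, contradicting those examples. Milnor's dichotomy (flat or strictly negative scalar curvature) is a statement about \emph{solvable} groups, not about all unimodular ones, and even in the solvable case your phrasing ``the metric is automatically flat by Milnor'' is wrong as stated (the Heisenberg group admits no flat left-invariant metric): flatness follows only by combining that dichotomy with the inequality $\Scal(g_\f)\geq 0$ you already derived.

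This leaves a genuine gap in the case $\Scal(g_\f)=0$ with $\G$ unimodular but not solvable: you obtain that $\f$ is parallel, but you never justify flatness, and ``$\Scal=0\Rightarrow$ flat'' is false for general unimodular groups (e.g.\ a product of $\SU(2)$ with a suitably scaled non-flat solvable unimodular factor has zero scalar curvature without being flat). The paper closes this step differently: a parallel $\G_2$-structure makes $g_\f$ Ricci-flat, and a homogeneous (in particular left-invariant) Ricci-flat metric is flat by Alekseevsky--Kimelfeld \cite{AK}. If you replace your step three accordingly — Milnor's solvable dichotomy \cite{Mil} combined with $\Scal(g_\f)\geq 0$ for the solvable case, and parallel $\Rightarrow$ Ricci-flat $\Rightarrow$ flat via \cite{AK} for the $\Scal(g_\f)=0$ case — the remainder of your argument goes through and coincides with the paper's proof.
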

\begin{proof}
Since the $7$-dimensional Lie group $\G$ is unimodular, every left-invariant $6$-form on it is closed (cf., e.g., \cite[Ch.~4, Lemma 1.1]{Schulte}). 
We then have $\delta\tau_1 = -\star d \star \tau_1 = 0$, whence it follows that 
\[
\Scal(g_\f) = \frac{49}{24}\t_0^2 + 24|\t_1|^2 \geq 0. 
\]
If $\Scal(g_\f)=0$, we have $\tau_0=0$, $\tau_1=0$ and thus $\tau_3=0$ by \eqref{eqnormtau3}. 
Therefore, the $\G_2$-structure $\f$ is parallel and the associated metric $g_\f$ is Ricci-flat and thus flat by \cite{AK}.  
If $\G$ is solvable, then $g_\f$ must be flat by \cite[Thm.~3.1]{Mil} and thus $\Scal(g_\f)=0$. 
\end{proof}

Since every simply connected Lie group admitting co-compact discrete subgroups must be unimodular \cite[Lemma 6.2]{Mil}, 
we obtain the following.  
\begin{corollary}\label{CorSolvG2T}
A seven-dimensional compact solvmanifold cannot admit invariant non-parallel strong $\G_2T$-structures. 
\end{corollary}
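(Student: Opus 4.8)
The plan is to deduce Corollary \ref{CorSolvG2T} directly from Corollary \ref{G2TUnimod}. Suppose $M$ is a seven-dimensional compact solvmanifold, i.e.\ $M = \G/\Gamma$ where $\G$ is a simply connected solvable Lie group and $\Gamma\subset\G$ is a co-compact discrete subgroup. An invariant $\G_2$-structure on $M$ is by definition induced by a left-invariant $\G_2$-structure $\f$ on $\G$, and the strong $\G_2T$ condition (closedness of $T$) descends to and lifts from the quotient since $T$ is itself left-invariant. So it suffices to analyze left-invariant strong $\G_2T$-structures on $\G$.

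The key input is that $\G$ must be unimodular: by \cite[Lemma 6.2]{Mil}, any simply connected Lie group admitting a co-compact discrete subgroup is unimodular. Hence $\G$ is a seven-dimensional \emph{unimodular solvable} Lie group carrying a left-invariant strong $\G_2T$-structure $\f$. Corollary \ref{G2TUnimod} then applies in the solvable case and yields that $g_\f$ is flat and that $\f$ is parallel. Pushing down to $M=\G/\Gamma$, the induced invariant $\G_2$-structure is parallel as well. This shows that every invariant strong $\G_2T$-structure on a seven-dimensional compact solvmanifold is parallel, which is precisely the contrapositive of the assertion that no invariant non-parallel strong $\G_2T$-structure can exist.

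Essentially all the work has already been done: the scalar-curvature identity of Proposition \ref{prop:scalar}, the vanishing $\delta\tau_1=0$ coming from the fact that left-invariant $6$-forms on a unimodular Lie group are closed, Milnor's flatness theorem \cite[Thm.~3.1]{Mil} for solvable Lie groups with flat left-invariant metrics, and the Alekseevskii--Kimel'fel'd theorem \cite{AK} identifying Ricci-flat homogeneous metrics with flat ones. The only genuinely new observation needed is the unimodularity of $\G$, which is the cited lemma of Milnor. I would simply assemble these facts in a couple of sentences. There is no real obstacle here; the statement is a direct corollary, and the proof in the excerpt is expected to consist of the remark about unimodularity followed by an appeal to Corollary \ref{G2TUnimod}.
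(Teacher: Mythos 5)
Your proposal is correct and follows exactly the paper's argument: identify the solvmanifold as $\Gamma\backslash\G$ with $\G$ simply connected solvable, note invariant $\G_2$-structures come from left-invariant ones, invoke Milnor's lemma for unimodularity of $\G$, and conclude via Corollary \ref{G2TUnimod}. No differences worth noting.
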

\begin{proof} 
Recall that a compact solvmanifold is given by the quotient $\Gamma\backslash\G$ of a simply connected solvable Lie group $\G$ 
by a co-compact discrete subgroup $\Gamma \subset \G$, 
and that an invariant $\G_2$-structure on $\Gamma\backslash\G$ is induced by a left-invariant one on $\G$. 
Since $\G$ is unimodular, the thesis follows from Corollary \ref{G2TUnimod}. 
\end{proof}

This corollary extends a previous result by Chiossi and Swann \cite{ChSw}, 
who proved the non-existence of non-parallel strong $\G_2T$-structures on the product of the circle $S^1$ 
with a six-dimensional nilmanifold.

\smallskip

When a strong $\G_2T$-structure is coclosed and the intrinsic torsion form $\tau_0$ is constant, 
a further constraint for the existence of such structures on a compact $7$-manifold holds. 
Indeed, the manifold must have non-zero third Betti number. 
\begin{proposition}\label{lem:co-closed}
Let  $\varphi$ be a coclosed strong $\G_2T$-structure on a compact $7$-manifold $M,$ and assume that  
$\tau_0$ is constant. Then, $T$ is harmonic and $b_3(M)>0$.
\end{proposition}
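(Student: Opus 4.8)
The plan is to show first that $T$ is closed, then coclosed, then invoke Hodge theory and Poincar\'e duality.

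\medskip

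\noindent\textbf{Step 1: $T$ is closed.} Since $\varphi$ is coclosed, we have $\theta = 4\tau_1 = 0$ by \eqref{LeeT}, so formula \eqref{Ttorforms} reduces $T$ to $T = \tfrac16\tau_0\,\varphi - \tau_3$. Combining this with the first equation in \eqref{TorsionForms} (again using $\tau_1=0$), namely $d\varphi = \tau_0\star\varphi + \star\tau_3$, one can express $\star\tau_3 = d\varphi - \tau_0\star\varphi$, hence $\tau_3 = \star d\varphi - \tau_0\,\varphi$ (recall $\star\star\varphi = \varphi$ in dimension seven on $3$-forms). Therefore $T = \tfrac16\tau_0\,\varphi - \star d\varphi + \tau_0\,\varphi = \tfrac76\tau_0\,\varphi - \star d\varphi$. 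Now $d(\star d\varphi) = d(d\star\varphi) $ would be wrong; instead use $dT = 0$ is exactly the equation we are trying to recover from the formula \eqref{eqn:tor}; more cleanly, since $d\star\varphi = 0$ we have by the general torsion formula \eqref{eqn:tor} that $T = \tfrac16(\star(d\varphi\wedge\varphi))\varphi - \star d\varphi$, so $dT = \tfrac16\, d\big((\star(d\varphi\wedge\varphi))\,\varphi\big) - d\star d\varphi$. Here the coefficient function $\star(d\varphi\wedge\varphi)$ equals $\tfrac76\tau_0^2\cdot(\text{const})$-type expression; in fact from \eqref{TorsionForms} one computes $\star(d\varphi\wedge\varphi) = \tau_0\,\star(\star\varphi\wedge\varphi) = 7\tau_0$ (using $\star\varphi\wedge\varphi = 7\,dV_\varphi$). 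So $T = \tfrac76\tau_0\,\varphi - \star d\varphi$ as above, and $dT = \tfrac76\,d\tau_0\wedge\varphi + \tfrac76\tau_0\,d\varphi - d\star d\varphi$. If $\tau_0$ is constant, the first term vanishes, and we must check $\tfrac76\tau_0\,d\varphi = d\star d\varphi$. Since $d\varphi = \tau_0\star\varphi + \star\tau_3$ and $\star d\varphi = \tau_0\varphi + \tau_3 = \tfrac76\tau_0\varphi - T$... — rather than chase this by hand I would simply invoke that $\nabla$ is a $\G_2$-connection with $\nabla\varphi = 0$ and $T$ its torsion, and appeal to the known identity that $dT$ is computable from the curvature; but the cleanest route: \eqref{eqn:tor} already gives $T$ explicitly, and differentiating the coclosed, constant-$\tau_0$ expression $T = \tfrac76\tau_0\,\varphi - \star d\varphi$ gives $dT = \tfrac76\tau_0\,d\varphi - d\star d\varphi$. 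Since $d\varphi$ lies in $\mathcal C^\infty(M)\varphi\oplus\Omega^3_{27}$ (components $\tau_0$ and $\tau_3$) and is exact, applying $d\star$ to it and using that $\star d\varphi = \tfrac76\tau_0\varphi - T$ yields $d\star d\varphi = \tfrac76\tau_0\, d\varphi - dT$, which is a tautology — so I will instead derive closedness of $T$ directly from the Bianchi-type identity for metric connections with skew torsion: for the unique $\G_2$-connection, $d T = 2\sigma_T + (\text{curvature terms})$, and under the coclosed, constant-$\tau_0$ hypothesis the relevant obstruction vanishes. The honest, self-contained argument is: differentiate \eqref{eqn:tor}, use $d\star\varphi = 0$, use $d\,dV_\varphi = 0$, and use $d\tau_0 = 0$, to get $dT = -d\star d\varphi$; then note $\star d\varphi = \tau_0\varphi + \tau_3$ and $d(\tau_0\varphi + \tau_3) = \tau_0 d\varphi + d\tau_3$; from $d(d\varphi) = 0$ with $d\varphi = \tau_0\star\varphi + \star\tau_3$ we get $\tau_0 d\star\varphi + d\star\tau_3 = 0$, i.e. $d\star\tau_3 = 0$; this plus type considerations on $\Omega^3_{27}$ forces $dT = 0$. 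I expect reconciling these pieces to be a short but slightly fiddly computation — this is \textbf{the main obstacle} and I would lean on the references to \cite{Bry, FI} for the precise torsion identities.

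\medskip

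\noindent\textbf{Step 2: $T$ is coclosed.} Since $M$ is compact and $T$ is closed, consider $\delta T = -\star d\star T$. From $T = \tfrac76\tau_0\varphi - \star d\varphi$ with $\tau_0$ constant, $\star T = \tfrac76\tau_0\star\varphi - \star\star d\varphi = \tfrac76\tau_0\star\varphi - d\varphi$ (since $\star\star = \mathrm{id}$ on $\Omega^3$). Hence $d\star T = \tfrac76\tau_0\, d\star\varphi - d\,d\varphi = 0$, using $d\star\varphi = 0$ and $d^2 = 0$. Therefore $\delta T = 0$, so $T$ is coclosed; combined with Step 1, $\Delta_{g_\varphi} T = (d\delta + \delta d)T = 0$, i.e. $T$ is harmonic. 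This step is immediate once Step 1 is in place.

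\medskip

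\noindent\textbf{Step 3: $b_3(M) > 0$.} By Hodge theory on the compact Riemannian manifold $(M, g_\varphi)$, the harmonic $3$-form $T$ represents a de Rham cohomology class $[T] \in H^3_{dR}(M;\R)$, and $[T] = 0$ if and only if $T = 0$. But $T = 0$ forces $\varphi$ to be parallel (as noted after \eqref{Ttorforms}), hence the metric $g_\varphi$ is Ricci-flat; I would either exclude this by assuming $\varphi$ is non-parallel, or — more in the spirit of the statement — observe that even in the parallel case a compact $\G_2$-manifold still has $b_3 \geq 1$ because $[\star\varphi]$... actually the clean dichotomy is: if $T \neq 0$ then $[T]$ is a nonzero class so $b_3(M) \geq 1$; if $T = 0$ then $\varphi$ is torsion-free, and on a compact manifold with a torsion-free $\G_2$-structure $d\varphi = 0$ with $\varphi$ harmonic and nonzero, giving again $b_3(M) \geq 1$. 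Either way $b_3(M) > 0$, completing the proof.
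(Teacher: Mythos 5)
Your Steps 2 and 3 reproduce the paper's proof exactly: coclosedness of $T$ follows from $\star T = \tfrac{7}{6}\tau_0\star\varphi - d\varphi$ together with $d\star\varphi=0$ and $d\tau_0=0$, and the Betti number claim follows from the dichotomy ``$T\neq 0$ harmonic'' versus ``$T=0$, hence $\varphi$ parallel and itself a nonzero harmonic $3$-form''. The problem is Step 1. You are trying to \emph{prove} that $T$ is closed from the hypotheses ``coclosed $+$ $\tau_0$ constant'', and this cannot succeed, because it is false: a nearly parallel $\G_2$-structure ($d\varphi=\tau_0\star\varphi$ with $\tau_0$ a nonzero constant, e.g.\ the round $S^7$) is coclosed with constant $\tau_0$, has $T=\tfrac16\tau_0\varphi$ and hence $dT=\tfrac16\tau_0^2\star\varphi\neq 0$; note also $b_3(S^7)=0$, so the conclusion itself fails there. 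Concretely, your final claim that $d\star\tau_3=0$ ``plus type considerations on $\Omega^3_{27}$ forces $dT=0$'' breaks down precisely in this example, where $\tau_3=0$ and the type argument gives no information, yet $dT\neq0$. The statement must be read with the standing assumption of that part of Section \ref{sec:IntClosed} (``from now on, we shall restrict our attention to strong $\G_2T$-structures, namely to the case where $dT=0$'', made explicit again in the sentence immediately preceding the proposition): closedness of $T$ is a hypothesis, not something to derive. With that reading, your Step 1 should be deleted entirely, and the rest of your argument is correct and coincides with the paper's.

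One smaller point: in Step 3 you briefly hesitate about the parallel case; the paper's resolution is the one you settle on, namely that a torsion-free $\varphi$ is closed and coclosed, hence a nonzero harmonic $3$-form, so $b_3(M)\geq 1$ without any extra non-parallelism assumption.
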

\begin{proof}
Since $\star T= \frac{7}{6} \tau_0 \star \varphi - d\varphi$ and $\tau_0$ is constant, we have that $T$ is coclosed and thus harmonic. 
If $\f$ is not torsion-free, then $T$ is not zero and $b_3(M)>0$. Otherwise, $\f$ is a harmonic $3$-form on $M$ and thus $b_3(M)>0$. 
\end{proof}

We conclude this section with some remarks on the interplay between the twisted $\G_2$ equation \eqref{eqn:strom7} for $\G_2$-structures 
and the twisted Calabi-Yau equation for $\SU(n)$-structures introduced in \cite{GRST}. 

Recall that an $\SU(n)$-structure on a $2n$-dimensional manifold $N^{2n}$ is defined by the data of an almost Hermitian structure $(g,J)$ with  
fundamental $2$-form $\o = g(J\cdot,\cdot)$ and a complex $(n,0)$-form $\Psi$ of unit norm. 
We let $\psip\coloneqq \mathrm{Re}(\Psi)$ and $\psim\coloneqq \mathrm{Im}(\Psi)$, and we denote by $\star_{2n}$ the Hodge operator 
induced by the metric $g$ and the volume form $dV_g = \frac{1}{n!}\o^n$. 

The Lee form of an $\SU(n)$-structure is the $1$-form $\theta_\o \coloneqq - J\delta_g\o$ and it can be characterized as 
the unique $1$-form such that $d\o^{n-1}=\theta_\o\W\o^{n-1}$. 
Here, we use the convention $J\alpha = \alpha(J\cdot,\ldots,J\cdot)$, for $\alpha\in\Omega^k(N^{2n})$. 

An $\SU(n)$-structure $(\o,\Psi)$ is said to be {\em torsion-free} if $d\omega=0$ and $d\Psi=0$. In such a case, the corresponding metric $g$ is Ricci-flat. 

An $\SU(n)$-structure $(\o,\Psi)$ solves the {\em twisted Calabi-Yau equation} if the following hold
\begin{equation}\label{TCY}
d\Psi = \theta_\o\W\Psi,\quad dd^c\o=0,\quad d\theta_\o = 0. 
\end{equation}
In particular, $J$ is integrable and the Hermitian structure $(g,J)$ is strong K\"ahler with torsion. 
Clearly, a torsion-free $\SU(n)$-structure trivially satisfies these equations. 

The next result points out a link between $\SU(n)$-structures solving the twisted Calabi-Yau equation \eqref{TCY}, for $n=2,3$, 
and $\G_2$-structures solving the twisted $\G_2$ equation \eqref{eqn:strom7}. 
\begin{proposition}\label{TCYG2link}
Let $N^{2n}$ be a $2n$-manifold, $n=2,3$, endowed with an $\SU(n)$-structure $(\o,\Psi)$ solving the twisted Calabi-Yau equation \eqref{TCY}. 
Then, the $7$-manifold $M = N^{2n}\times \R^{7-2n}$ admits a strong $\G_2T$-structure $\f$ solving the twisted $\G_2$ equation \eqref{eqn:strom7}, 
and whose Lee form $\theta$ and torsion $3$-form $T$ coincide with $\theta_\o$ and $d^c\o$, respectively. 
\end{proposition}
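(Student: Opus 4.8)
The plan is to build the $\G_2$-structure on $M=N^{2n}\times\R^{7-2n}$ explicitly out of the $\SU(n)$-data and a flat orthonormal coframe $(dt^1,\dots,dt^{7-2n})$ on the $\R^{7-2n}$ factor, then verify the four equations of \eqref{eqn:strom7} by reducing them to the three equations of \eqref{TCY}. For $n=3$ I would take $\f = \o\W dt + \psip$, where $dt$ is the coordinate $1$-form on $\R^1$; this is the standard way an $\SU(3)$-structure on a $6$-manifold induces a $\G_2$-structure on the product with a line, and one has $\star\f = \frac12\o^2 - \psim\W dt$, with $g_\f = g \oplus dt^2$. For $n=2$ I would take $\f$ to be the corresponding expression built from the $\SU(2)$-structure $(\o_1,\o_2,\o_3)$ on $N^4$ (with $\o=\o_1$, $\Psi=\o_2+i\o_3$) together with the flat coframe $dt^1,dt^2,dt^3$ on $\R^3$, namely the Calabi--Yau-type form $\f = dt^{123} + dt^1\W\o_1 + dt^2\W\o_2 + dt^3\W\o_3$ (up to the usual sign conventions needed to make it definite and match the adapted-basis normalization in Section \ref{SecDefForm7}); then $\star\f = \tfrac12\o_1^2 - dt^{23}\W\o_1 - dt^{31}\W\o_2 - dt^{12}\W\o_3$ or the analogous expression.

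The computational heart is to express $d\star\f$, $T$, $dT$, and $\theta$ purely in terms of $d\o$, $d\psip$, $d\psim$ (resp. $d\o_i$) and then invoke \eqref{TCY}. First I would show $\theta=\theta_\o$: from $d\o^{n-1}=\theta_\o\W\o^{n-1}$ one computes $d\star\f$ and reads off the $\Omega^1$-torsion, checking it equals the pullback of $\theta_\o$ to $M$; since $\theta_\o$ is closed by the third equation of \eqref{TCY} and the extra factor is flat, $d\theta=0$ is immediate. Next, for the coclosedness-type equation $d\star\f=\theta\W\star\f$: writing $\star\f$ in the form above and differentiating, the terms not involving $dt$'s reduce to $d(\tfrac{1}{(n-1)!}\o^{n-1})=\theta_\o\W\tfrac1{(n-1)!}\o^{n-1}$ (the definition of $\theta_\o$), and the terms with $dt$ reduce to statements about $d\psim$ (resp. $d\o_i$) that follow from $d\Psi=\theta_\o\W\Psi$ by taking real and imaginary parts — note $d^c\o$ appears here, and one must match the $dt$-component of $d\star\f-\theta\W\star\f$ against $dd^c\o$, which vanishes by the second equation of \eqref{TCY}. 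For the torsion form, I would compute $T$ either from \eqref{eqn:tor} or \eqref{Ttorforms}; the expected clean answer is $T = d^c\o$ viewed as a $3$-form on $M$ (pulled back), so $dT = d\,d^c\o = 0$ is again exactly the middle equation of \eqref{TCY}. Finally $d\f\W\f=0$: expanding $d\f\W\f$, the pieces reduce to $d\o\W\psip$, $d\psip\W\o$ and their analogues, all of which vanish for type reasons once $J$ is integrable and $\psip$ is closed modulo $\theta_\o\W\psip$ — concretely, $d\Psi=\theta_\o\W\Psi$ forces $\Psi$ to be of type $(n,0)$-plus-lower in a controlled way, and $\psip\W\o$-type terms vanish by bidegree. (In the $n=2$ case $d\f\W\f=0$ is even more transparent.)

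The main obstacle I anticipate is bookkeeping the sign and normalization conventions so that the explicitly written $\f$ is genuinely definite with the claimed metric and Hodge dual, and so that the torsion forms $\tau_0,\tau_1,\tau_3$ extracted from \eqref{TorsionForms} land in the stated normalization ($\theta=4\tau_1$, etc.). In particular one should check that $\tau_0$ and $\tau_3$ produced by the product structure assemble correctly into $T=d^c\o$ via \eqref{Ttorforms}; this is where a wrong sign in the definition of $d^c$ or in $\psip$ vs. $\psim$ would propagate. A secondary point is that $d^c\o=-J d\o$ (with the paper's convention $J\alpha=\alpha(J\cdot,\dots,J\cdot)$) must be shown to coincide, as a form on $M$, with the torsion $3$-form; this uses the standard identity relating $d^c\o$, $d\o$, and the Nijenhuis tensor, which simplifies because $J$ is integrable.

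I would organize the write-up as: (i) fix conventions and write $\f$ and $\star\f$ explicitly; (ii) record $d\f$ and $d\star\f$; (iii) identify $\theta=\theta_\o$ and $T=d^c\o$ and conclude $d\theta=0$, $dT=0$; (iv) verify $d\star\f=\theta\W\star\f$ and $d\f\W\f=0$ using \eqref{TCY}; (v) remark that the $n=2$ case is identical with the $\SU(2)$-triple replacing $(\o,\psip,\psim)$. Throughout, the only nontrivial inputs are the three equations \eqref{TCY} and the characterization $d\o^{n-1}=\theta_\o\W\o^{n-1}$ of the Lee form, so the proof is essentially a translation between the two sets of structure equations.
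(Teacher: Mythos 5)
Your proposal follows essentially the same route as the paper's proof: write $\f$ explicitly as the standard product ansatz built from the $\SU(n)$-data and a flat coframe on $\R^{7-2n}$, compute $\star\f$, verify $d\star\f=\theta_\o\W\star\f$ and $d\f\W\f=0$ directly from \eqref{TCY} and the compatibility $\o\W\psi_\pm=0$, and then identify $T=d^c\o$ so that $dT=0$ and $d\theta=0$ follow from the remaining two equations of \eqref{TCY}. The only discrepancies are the sign conventions you yourself flag (the sign of $\psim\W ds$ in $\star\f$, and $d^c\o=Jd\o$ in the paper's convention rather than $-Jd\o$), which do not affect the validity of the argument.
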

\begin{proof}
Let us start with the case $n=2$. On the manifold $M=N^4\times \R^3$, we consider the definite $3$-form 
\begin{equation}\label{G2N4T3}
\f = ds^1\W ds^2\W ds^3 + ds^1\W\o + ds^2\W \psip - ds^3\W \psim,
\end{equation}
$s^1,s^2,s^3$ being the standard coordinates on $\R^3$. 
This $3$-form induces the product metric $g_\f = \sum_{i=1}^3 ds^i\otimes ds^i + g$ 
and the orientation $dV_\f = ds^1\W ds^2 \W ds^3\W dV_g$. 
Since $\o$, $\psip$ and $\psim$ are self-dual with respect to the Hodge operator $\star_4$ on $N^4$ induced by 
the metric $g$ and the orientation $dV_g = \tfrac12\o^2$, we have
\[
\star \f = \tfrac12\o^2 + ds^2\W ds^3\W\o - ds^1\W ds^3\W\psip - ds^1\W ds^2\W\psim. 
\]
Since $d\o = \theta_\o\W\o$ and $d\Psi = \theta_\o\W\Psi$, we obtain $d\star \f = \theta_\o\W\star \f$. 
Thus, $\f$ is a $\G_2T$-structure with closed Lee form $\theta = \theta_\o$. Moreover, we have 
\[
d\f\W\f = \theta_\o \W\left(ds^1\W\o + ds^2\W \psip - ds^3\W\psim\right)\W\f = 0, 
\]
since the self-dual forms $\o$, $\psip$ and $\psim$ are pairwise orthogonal. 
Finally, the torsion $3$-form of $\f$ is 
\[
T = \star( \theta\W\f - d\f ) = \star (\theta_\o\W ds^1\W ds^2\W ds^3 ) = - \star_4\theta_\o  = Jd\o = d^c\o, 
\]
since $\theta_\o = J\star_4d\star_4\o = \star_4Jd\o$ and $J\o=\o$. 

Let us now focus on the case $n=3$. Here we consider the $\G_2$-structure on $M = N^6\times \R$ given by the definite $3$-form
\begin{equation}\label{G2N6T1}
\f = \o \W ds + \psip,
\end{equation}
$s$ being the standard coordinate on $\R$. Then, $g_\f = g + ds^2$, $dV_\f = dV_g\W ds$ and $\star \f = \tfrac12\o^2 + \psim \W ds$. 
The identity $d\star \f = \theta_\o\W\star \f$ immediately follows from $d\o^2=\theta_\o\W\o^2$ and $d\psip = \theta_\o\W \psip$. 
Moreover, since $\o\W\psip = 0$ and $d\psip=\theta_\o\W\psip$, we have
\[
d\f\W\f 	= \left( d\o\W ds+\theta_\o\W\psip\right)\W \left(\o\W ds+\psip\right) 
		= -d\o\W\psip \W ds = \o\W\theta_\o\W\psip \W ds = 0. 
\]
The differential of the fundamental $2$-form is given by $d\o =  w_3 +  \tfrac12\theta_\o\W\o$, where $w_3\in\Omega^3(N^6)$ satisfies 
$w_3\W\omega = 0$, $w_3\W\psi_{\pm}=0$ and $Jw_3 = \star_6w_3$ (cf.~\cite{BeVe} and see Section \ref{SecProdS1} for more details). 
We then have
\[
d^c\o = Jd\o = Jw_3 + \tfrac12 J\theta_\o\W\o   = \star_6\left( w_3 -\tfrac12 \theta_\o\W\o \right). 
\]
where we used the identity $J\beta\W\o = -\star_6(\beta\W\o)$, which holds for every $1$-form $\beta$ on $N^6$. 
Finally, we obtain 
\[
T =  \star( \theta\W\f-d\f) = \star((\theta_\o\W\o-d\o)\W ds) = \star_6(d\o -\theta_\o\W\o) =  \star_6\left(w_3 - \tfrac12 \theta_\o\W\o\right) = d^c\o. 
\]
\end{proof}

As a consequence of Proposition \ref{TCYG2link} and Corollary \ref{G2TUnimod}, we obtain the following non-existence result for 
non-trivial solutions of the twisted Calabi-Yau equation on compact solvmanifolds. 
\begin{proposition}\label{PropTCYSol}
For $n=2,3$, every invariant $\SU(n)$-structure solving the 
twisted Calabi-Yau equation on a $2n$-dimensional compact solvmanifold $\Gamma\backslash \G$ must be torsion-free. 
\end{proposition}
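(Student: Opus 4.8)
The plan is to reduce the statement to the corresponding statement about strong $\G_2T$-structures on solvmanifolds, which has already been handled by Corollary \ref{CorSolvG2T}. First I would recall that a $2n$-dimensional compact solvmanifold $\Gamma\backslash\G$, with $\G$ simply connected solvable, is covered by (indeed equal to a quotient of) $\G$, and that any co-compact lattice forces $\G$ to be unimodular by \cite[Lemma 6.2]{Mil}. Then I would form the product manifold $M = (\Gamma\backslash\G)\times T^{7-2n}$, which is a $7$-dimensional compact solvmanifold: it is the quotient of the simply connected solvable unimodular Lie group $\G\times\R^{7-2n}$ by the co-compact lattice $\Gamma\times\Z^{7-2n}$.

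The key step is to invoke Proposition \ref{TCYG2link} in its invariant incarnation. An invariant $\SU(n)$-structure $(\o,\Psi)$ solving the twisted Calabi-Yau equation on $\Gamma\backslash\G$ is induced by a left-invariant one on $\G$; applying the construction of Proposition \ref{TCYG2link} with $\R^{7-2n}$ replaced by the torus $T^{7-2n}$ (the formulas \eqref{G2N4T3} and \eqref{G2N6T1} only involve translation-invariant data on the Euclidean factor, hence descend to the torus) produces a left-invariant definite $3$-form $\f$ on $\G\times\R^{7-2n}$ which is a strong $\G_2T$-structure — indeed one solving the twisted $\G_2$ equation \eqref{eqn:strom7} — and which descends to an invariant strong $\G_2T$-structure on $M$. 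Since $M$ is a compact solvmanifold, Corollary \ref{CorSolvG2T} forces $\f$ to be parallel.

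It then remains to descend the conclusion "$\f$ parallel" back to "$(\o,\Psi)$ torsion-free". Here I would use that by Proposition \ref{TCYG2link} the Lee form and torsion $3$-form of $\f$ are $\theta = \theta_\o$ and $T = d^c\o$; more directly, $\f$ parallel means $d\f = 0$ and $d\star\f = 0$, and reading off \eqref{G2N4T3} (resp.\ \eqref{G2N6T1}) together with the computed expression for $\star\f$ shows that $d\star\f = 0$ forces $d\o^{n-1}=0$ and $d\psip = 0$ (hence $\theta_\o = 0$ and, together with $d\f=0$, $d\Psi = 0$), which is exactly the torsion-free condition $d\o = 0$, $d\Psi = 0$ for the $\SU(n)$-structure. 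Equivalently one can note $T = d^c\o = 0$ and $\theta_\o = 0$ give $d\o = 0$, and then $d\f = 0$ gives $d\psip = 0$, so $d\Psi = 0$.

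The only mild subtlety — and the step I would be most careful about — is checking that the Euclidean factor in Proposition \ref{TCYG2link} really can be compactified to a torus without disturbing anything: the $3$-form $\f$ in \eqref{G2N4T3}, \eqref{G2N6T1} and all the verifications in the proof use only the coordinate $1$-forms $ds^i$ and never the coordinate functions $s^i$ themselves, so every object involved is invariant under the translation action and descends to $T^{7-2n}$; the resulting $\f$ is then invariant under the transitive action of $\G\times\R^{7-2n}$ on $M$, so it is an invariant strong $\G_2T$-structure in the sense required by Corollary \ref{CorSolvG2T}. With that observation in place the argument is immediate.
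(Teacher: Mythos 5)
Your proof is correct and follows essentially the same route as the paper: lift to a left-invariant structure, apply Proposition \ref{TCYG2link} to produce a left-invariant strong $\G_2T$-structure in dimension seven, invoke the rigidity of such structures in the unimodular solvable setting, and read off $d\o=0$, $d\Psi=0$ from the closedness of $\f$ and $\star\f$ (equivalently from $T=d^c\o=0$ and $\theta_\o=0$). The only difference is cosmetic: you compactify the Euclidean factor to $T^{7-2n}$ so as to quote Corollary \ref{CorSolvG2T} on the compact solvmanifold $(\Gamma\backslash\G)\times T^{7-2n}$, whereas the paper applies Corollary \ref{G2TUnimod} directly to the left-invariant structure on the unimodular solvable Lie group $\G\times\R^{7-2n}$, which makes the descent to the torus (the step you rightly flag as needing care) unnecessary.
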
 
\begin{proof}
An invariant $\SU(n)$-structure on $\Gamma\backslash \G$ is induced by a left-invariant one on the unimodular simply connected solvable Lie group $\G$. 
By Proposition \ref{TCYG2link},  if $(\omega,\Psi)$ is a left-invariant $\SU(n)$-structure on $\G$ solving the twisted Calabi-Yau equation, 
then $\G\times \R^{7-2n}$ admits a strong $\G_2T$-structure $\f$ defined as in \eqref{G2N4T3} when $n=2$ 
and as in \eqref{G2N6T1} when $n=3$. In both cases, the $\G_2$-structure is clearly left-invariant. 
By Corollary \ref{G2TUnimod}, $\f$ must be parallel and the corresponding metric $g_\f$ must be flat. 
Since both $\f$ and $\star\f$ are closed, we see that $d\omega=0$ and $d\Psi=0$. 
The thesis then follows. 
\end{proof}

\begin{remark}
The previous result in the case $n=2$ also follows from the classification result \cite[Prop.~2.10]{GRST}. 
\end{remark}

%

\section{Strong $\G_2T$-structures on Riemannian products $N\times S^1$}\label{SecProdS1}
In the next sections, we will investigate the existence of non-parallel strong $\G_2T$-structures on compact homogeneous spaces. 
As we will see, in various examples the $7$-manifold $(M,g_\f)$ is isometric to a Riemannian product $(N\times S^1,g+\eta\otimes\eta)$, where 
$(N,g)$ is a six-dimensional Riemannian manifold and $\eta$ is a closed $1$-form on $S^1$. 
In this case, there exists an SU(3)-structure $(\o, \Psi)$ 
on $N$ with corresponding metric $g$ such that 
\[
\f = \omega\W\eta+\psip. 
\] 
Moreover, one has $g_\f=g+\eta\otimes\eta$, $dV_\f = dV_g\W\eta$ and 
\[
\star\f = \frac12\omega\W\omega+\psim\W\eta, 
\]
where $\psim=\star_6\psip$ and $\omega^2 = 2\star_6\omega$, 
Recall that $\omega$ and $\psi_\pm$ satisfy the compatibility condition $\omega\W\psi_\pm=0$ and the normalization condition 
$3\,\psip\W\psim = 2\,\omega^3 = 12\,dV_g$.

\smallskip

The intrinsic torsion of an $\SU(3)$-structure  $(\omega, \Psi)$ is determined by the unique differential forms 
$w_1^\pm\in\mathcal{C}^\infty(N)$, $w_2^\pm\in\Omega^2_8(N)\coloneqq\left\{\sigma\in\Omega^2(N) \st J\sigma=\sigma,~\sigma\W\omega^2=0 \right\}$, 
$w_3\in\Omega^3_{12}(N) \coloneqq \left\{\gamma\in\Omega^3(N) \st \gamma\W\omega=0,~\gamma\W\psi_{\pm}=0 \right\}$, 
$w_4, w_5 \in\Omega^1(N)$ such that 
\[
\begin{split}
d\omega	&= -\frac{3}{2}w_1^- \psi_+ + \frac{3}{2}w_1^+ \psi_- + w_3 + w_4 \wedge \omega ,\\
d\psi_+	&= w_1^+ \omega^2 +\star_6  w_2^+ +  \psi_+\W w_5,\\
d\psi_-	&= w_1^- \omega^2 + \star_6 w_2^- +  \psi_+ \W Jw_5,
\end{split}
\] 
see \cite{BeVe,ChSa}. 
These forms are the {\em intrinsic torsion forms} of the SU(3)-structure. When they all vanish, one has $d\omega=0$ and $d\Psi=0$, 
and thus the SU(3)-structure is torsion-free.

\begin{remark}\label{RemTCYSU3}
From the expression of $d\o$, we see that $\tfrac12 d \o^2 = d\o\W\o = w_4\W\o^2$. 
Thus, the Lee form of an $\SU(3)$-structure is given by $\theta_\o= 2 w_4$. 
It is known that the almost complex structure $J$ is integrable if and only if $w_1^\pm=0$ and $w_2^\pm=0$, see \cite{ChSa}. 
In particular, the only (possibly) non-vanishing intrinsic torsion forms of an $\SU(3)$-structure solving the twisted Calabi-Yau equation \eqref{TCY} are 
$w_3$ and $2w_4 = -w_5 = \theta_\o$. 
\end{remark}

The intrinsic torsion forms of the $\G_2$-structure $\f = \omega\W\eta+\psip$ on $M=N\times S^1$ are related to the intrinsic torsion 
forms of the SU(3)-structure $(\omega,\Psi)$. The complete description of this interplay is given in \cite[Thm.~3.1]{ChSa}. 
Decomposing the Lee form of $\f$ as follows 
\[
\theta = \beta + \lambda\eta, 
\]
with $\beta\in\Omega^1(N)$ and $\lambda\in\mathcal{C}^\infty(N)$, one has that $\f$ is a $\G_2T$-structure, i.e., $\tau_2=0$, if and only if 
\begin{equation}\label{G2Tprod}
w_1^- = \frac{\lambda}{2},\quad w_2^-=0,\quad w_5=-2w_4=-\beta,
\end{equation}
see \cite{ChSw}. In particular, $\beta=\theta_\o$. 
Moreover, the torsion $3$-form is given by
\begin{equation}\label{Tprod}
T = \frac12 w_1^+\psip + \frac12 w_1^- \psim +\star_6 w_3 -\frac12\star_6(\theta_\o\W\omega) - w_2^+\W\eta, 
\end{equation}
as one can easily compute starting from the general expression \eqref{eqn:tor} 
and using the interplay between the Hodge operators $\star$ and $\star_6$ 
(see also \cite{ChSw}, but pay attention to the presence of an unnecessary extra summand which does not affect the computations afterwards).  

\smallskip

We now focus on the case where $\theta =  \lambda\eta$, namely $\theta_\o=0$. Clearly, this is not covered by Proposition \ref{TCYG2link}. 
Notice that $\theta$ is closed if and only if $\lambda$ is constant 
and that $\lambda=0$ if and only if the $\G_2$-structure $\f=\omega\W\eta+\psip$ is coclosed. 
We summarize some useful properties in the next result.

\begin{proposition} \label{prop:product-closedT}
Let $(N,\omega,\Psi)$ be a $6$-manifold with an $\SU(3)$-structure, consider the $\G_2$-structure
$\varphi= \omega \wedge \eta + \psi_+$ on $M=N\times S^1$, and assume that its Lee form is given by $\theta =  \lambda\eta$, 
for some $\lambda\in\mathcal{C}^\infty(N)$. 
Then
\begin{enumerate}[$(1)$]
\item\label{prodclosed1}  $\varphi$ is a $\G_2T$-structure if and only if 
\begin{equation}\label{G2TNS}
w_1^- = \frac{\lambda}{2},\quad  w_2^-=0,\quad  w_4 = 0 = w_5, 
\end{equation}
and its torsion $3$-form $T$ is closed if and only if 
\begin{equation}\label{G2TNSdT}
dw_2^+ = 0, \quad  d\star_6 w_3 = -\frac12 d\left( w_1^+\psip +  w_1^- \psim\right).
\end{equation}
In such a case, $\lambda = 2w_1^-$ is constant and the Lee form $\theta$ is closed. 
In particular, when $\lambda=0$, $\f$ is a coclosed $\G_2$-structure with closed torsion $3$-form. 
Furthermore, when $N$ is compact, the intrinsic torsion form $w_2^+$ must vanish. 
\item\label{prodclosed2} 
If conditions \eqref{G2TNS} and \eqref{G2TNSdT} hold and $N$ is compact, then the intrinsic torsion form  $w_1^+$ is constant. 
Moreover, \eqref{G2TNSdT} is equivalent to
\[
\delta_g w_3 =  \left( \left( w_1^+\right)^2 + \left( w_1^- \right)^2\right) \omega,
\]
and $T$ is a harmonic $3$-form on $N.$ 
\item\label{prodclosed3}  The $\G_2$-structure $\f=\omega \wedge \eta + \psi_+$ is a solution of the 
twisted $\G_2$ equation \eqref{eqn:strom7} if and only if the 
conditions \eqref{G2TNS} and \eqref{G2TNSdT} together with $w_1^+=0$ hold. 
If $N$ is compact and $\f$ is not coclosed, then  
\eqref{G2TNSdT} is equivalent to 
\[
\delta_g w_3 =   \left( w_1^- \right)^2 \omega,
\]
and $T$ is a non-zero harmonic $3$-form on $N.$ 
Otherwise, both 
the $\SU(3)$-structure $(\omega,\Psi)$ and the $\G_2$-structure $\f$ are torsion-free. 
\end{enumerate}
\end{proposition}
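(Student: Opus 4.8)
The plan is to translate each of the three assertions into statements about the intrinsic torsion forms $w_1^\pm, w_2^\pm, w_3, w_4, w_5$ of the underlying $\SU(3)$-structure, using the dictionary already recorded in \eqref{G2Tprod}--\eqref{Tprod} and the fact that, since $\eta$ is closed, the equation $dT=0$ on $M=N\times S^1$ splits into a condition on $N$ and the vanishing of the $\eta$-component. For part~(1), the $\G_2T$ characterization is immediate: specializing \eqref{G2Tprod} to the case $\beta=0$ (i.e.\ $\theta=\lambda\eta$) gives exactly \eqref{G2TNS}. Assuming \eqref{G2TNS}, formula \eqref{Tprod} simplifies (the $\theta_\o$-term drops since $w_4=0$) to $T=\tfrac12 w_1^+\psip+\tfrac12 w_1^-\psim+\star_6 w_3-w_2^+\W\eta$; imposing $dT=0$ and separating the summand free of $\eta$ from the one containing $\eta$ yields precisely the two equations of \eqref{G2TNSdT}. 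To see that $\lambda$ is constant, note that under \eqref{G2TNS} the structure equations read $d\psim=w_1^-\omega^2$ and $d\omega=-\tfrac32 w_1^-\psip+\tfrac32 w_1^+\psim+w_3$, so that $d\omega^2=2\,d\omega\W\omega=0$; then $0=d^2\psim=dw_1^-\W\omega^2$, and since wedging a $1$-form with $\omega^2$ is pointwise injective, $w_1^-$ — hence $\lambda=2w_1^-$ — is constant, so $\theta=\lambda\eta$ is closed, and $\lambda=0$ gives $\theta=0$, i.e.\ $\varphi$ is coclosed. Finally, for $N$ compact, $w_5=0$ gives $d\psip=w_1^+\omega^2+\star_6 w_2^+$, whence, using $w_2^+\W\omega^2=0$ (as $w_2^+\in\Omega^2_8$), Stokes' theorem, and $dw_2^+=0$,
\[
\int_N |w_2^+|^2\,dV_g=\int_N w_2^+\W\star_6 w_2^+=\int_N w_2^+\W d\psip=-\int_N dw_2^+\W\psip=0,
\]
so $w_2^+=0$.

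For part~(2) I would first use that $w_2^+=0$ (from part~(1)) turns $d\psip$ into $w_1^+\omega^2$, so $0=d^2\psip=dw_1^+\W\omega^2$ and therefore $w_1^+$ is constant. With $w_1^\pm$ constant, the second equation of \eqref{G2TNSdT} becomes $d\star_6 w_3=-\tfrac12\big((w_1^+)^2+(w_1^-)^2\big)\omega^2$; applying $-\star_6$ and $\star_6\omega^2=2\omega$ converts this into $\delta_g w_3=\big((w_1^+)^2+(w_1^-)^2\big)\omega$, and the chain is reversible, giving the claimed reformulation of \eqref{G2TNSdT}. For the harmonicity of $T$ (now a genuine $3$-form on $N$), $dT=0$ is exactly \eqref{G2TNSdT}, while $\delta_g T=0$ follows from $\delta_g\psip=-2w_1^-\omega$, $\delta_g\psim=2w_1^+\omega$ and $\delta_g(\star_6 w_3)=\star_6 dw_3=0$, the last identity because $d^2\omega=0$ together with the constancy of $w_1^\pm$ forces $dw_3=0$; the two surviving terms then cancel.

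For part~(3), by the discussion preceding the statement the second and fourth equations of \eqref{eqn:strom7} are \eqref{G2TNS} (the latter automatically, since $\lambda$ is then constant) and the third is \eqref{G2TNSdT}, so the only additional condition is $d\varphi\W\varphi=0$. Expanding $d\varphi=d\omega\W\eta+d\psip$ against $\varphi=\omega\W\eta+\psip$ and using $\eta\W\eta=0$, $w_3\W\psi_\pm=0$, $\psip\W\psip=0$, $\omega^2\W w_2^+=0$ and the normalization $\psip\W\psim=\tfrac23\omega^3$, one finds $d\varphi\W\varphi=2w_1^+\,\omega^3\W\eta$, which vanishes if and only if $w_1^+=0$; this is the asserted characterization. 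When $N$ is compact and $\varphi$ is not coclosed, part~(2) with $w_1^+=0$ gives the equivalence of \eqref{G2TNSdT} with $\delta_g w_3=(w_1^-)^2\omega$ and the harmonicity of $T$ on $N$, while $T\neq0$ since otherwise $\theta=-\star(T\W\varphi)=0$ by \eqref{LeeT}. If instead $\varphi$ is coclosed, then $\lambda=0$, so $w_1^-=0$ and all intrinsic torsion forms except possibly $w_3$ vanish; hence $d\omega=w_3$ is exact, \eqref{G2TNSdT} forces $\delta_g w_3=0$, and $d^2\omega=0$ forces $dw_3=0$, so the exact harmonic form $w_3$ vanishes on the compact manifold $N$, and both $(\omega,\Psi)$ and $\varphi$ are torsion-free.

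The calculations on $N$ are routine given the standard $\SU(3)$ Hodge identities ($\star_6\psip=\psim$, $\star_6\omega^2=2\omega$, and $\star_6 w_2^+=-w_2^+\W\omega$ for $w_2^+\in\Omega^2_8$) together with repeated use of $d^2=0$ (which yields $d\omega^2=0$, the constancy of $w_1^\pm$, and $dw_3=0$). I expect the only genuinely global point — and the step most in need of care — to be the integration-by-parts argument forcing $w_2^+=0$ on compact $N$, which then propagates to the constancy of $w_1^+$ and to the harmonicity statements in parts~(2) and~(3).
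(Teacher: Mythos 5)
Your proposal is correct and follows essentially the same route as the paper's proof: translating everything into the $\SU(3)$ torsion forms via \eqref{G2Tprod}--\eqref{Tprod}, using $d^2=0$ to get constancy of $\lambda$ and $w_1^+$ (and $dw_3=0$), the same integration-by-parts argument forcing $w_2^+=0$ on compact $N$, the same reformulation via $\star_6\omega^2=2\omega$, and the same computation $d\varphi\wedge\varphi=2w_1^+\,\omega^3\wedge\eta$ for part (3). The only differences are cosmetic: you verify $\delta_g T=0$ directly (the paper computes $d\star_6 T$) and you use \eqref{LeeT} to see $T\neq0$ (the paper reads it off the $\psi_-$-component), both trivially equivalent.
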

\begin{proof}
\begin{enumerate}[$(1)$]
\item Since $\theta =  \lambda\eta$, we have $\beta=0$ and the conditions characterizing a strong $\G_2T$-structure 
$\varphi= \omega \wedge \eta + \psi_+$ 
with Lee form $\theta$ immediately follow from \eqref{G2Tprod} and \eqref{Tprod}. 
In particular, the SU(3)-structure $(\omega,\Psi)$ on $N$ satisfies 
\begin{equation}\label{SU3sG2T}
d\omega	= -\frac{3}{4}\lambda\, \psi_+ + \frac{3}{2}\,w_1^+ \psi_- + w_3,\quad
d\psi_+	= w_1^+ \omega^2 +\star_6  w_2^+,\quad
d\psi_-	=  \frac{\lambda}{2}\, \omega^2.
\end{equation}
From the first equation, we see that $d\omega\W\omega=0$. Taking the exterior derivative of both sides of the third equation we get 
$d\lambda\W\omega^2=0$, whence it follows that $\lambda = 2w_1^-$ is constant and $\theta$ is closed. 
If $N$ is compact, the condition $dw_2^+=0$ implies $w_2^+=0$. Indeed, the norm of $w_2^+$ with respect to the $L^2$ inner product 
induced by $g$ on $\Lambda^2T^*N$ is 
\[
\left\| w_2^+\right\|^2  = \int_N w_2^+\W \star_6 w_2^+ =  \int_N w_2^+\W d\psip = -  \int_N dw_2^+\W \psip = 0, 
\]
where we used the  identity $w_2^+\W\omega^2=0$. 
\item Assume that \eqref{G2TNS} and \eqref{G2TNSdT} hold and that $N$ is compact. 
Then, $w_2^+=0$ and taking the exterior derivative of both sides of the second equation in \eqref{SU3sG2T}, we obtain  
$0 = dw_1^+ \W \omega^2,$ from which the first claim follows. 
The first equation in \eqref{G2TNSdT} is then trivially satisfied, while the second one becomes 
\[
d\star_6 w_3 =  -\frac12 \left(w_1^+d\psip + w_1^- d \psim  \right)
=  -\frac12  \left( \left( w_1^+\right)^2 + \left( w_1^- \right)^2\right)  \omega^2, 
\]
whence we get $\delta_g w_3 = -\star_6 d\star_6 w_3 =   \left( \left( w_1^+\right)^2 + \left( w_1^- \right)^2\right)  \omega$. 
The expression \eqref{Tprod} of the torsion $3$-form reduces to
\[
T = \frac12 w_1^+\psip + \frac12 w_1^- \psim +\star_6 w_3, 
\]
and thus $T$ is a closed $3$-form on $N$. Using equations \eqref{SU3sG2T} with $w_2^+=0$, we then obtain
\[
d\star_6T =  \frac12 w_1^+d\psim - \frac12 w_1^- d\psip - d w_3 = 0. 
\]
\item 
Since 
\[
d\f \W \f = (-d\omega \W \psip + d\psip \W \omega )\wedge \eta  = \left(\frac{3}{2}w_1^+ \psip \W \psim + w_1^+ \omega^3 \right)\wedge \eta 
= 12 w_1^+dV_\f,
\] 
a solution of the twisted $\G_2$ equation \eqref{eqn:strom7} must satisfy \eqref{G2TNS}, \eqref{G2TNSdT}, 
and the additional constraint $w_1^+=0$.  
Assume now that $N$ is compact, so that $w_2^+=0$. 
If $\f$ is not coclosed, then $w_1^-\neq 0$ and $T$ is a non-zero harmonic 3-form on $N$ by part (\ref{prodclosed2}). 
Otherwise, $\psip$ and $\psim$ are closed forms, $d\omega=w_3$, and \eqref{G2TNSdT} becomes $\delta_gw_3=0$. 
Therefore $w_3$ must vanish, so that both $(\omega,\Psi)$ and $\f$ are torsion-free. 
\end{enumerate}
\end{proof}

\begin{corollary}\label{Corb3N}
Let $N$ be a compact $6$-manifold with an $\SU(3)$-structure $(\omega,\Psi)$ inducing 
a strong $\G_2T$-structure $\f=\omega\W\eta+\psip$  with Lee form $\theta = \lambda\eta$ on $M=N\times S^1$. 
Then, $N$ (and thus $M$) has positive third Betti number. 
\end{corollary}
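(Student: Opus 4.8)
The plan is to deduce the claim from part (\ref{prodclosed2}) of Proposition \ref{prop:product-closedT}, which already establishes that $T$ is a harmonic $3$-form on $N$ under exactly the hypotheses in force here. First I would invoke part (\ref{prodclosed1}): since $\f=\omega\W\eta+\psip$ is a strong $\G_2T$-structure with Lee form $\theta=\lambda\eta$, the conditions \eqref{G2TNS} and \eqref{G2TNSdT} hold, $\lambda=2w_1^-$ is constant, and (as $N$ is compact) $w_2^+=0$. Hence by part (\ref{prodclosed2}) the torsion $3$-form reduces to $T=\tfrac12 w_1^+\psip+\tfrac12 w_1^-\psim+\star_6 w_3$ and is a harmonic $3$-form on the compact manifold $N$.

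Next I would split into two cases according to whether $\f$ is parallel. If $\f$ is \emph{not} torsion-free, then by \eqref{Ttorforms} (or the remark following it) $T$ is not identically zero; being a non-zero harmonic $3$-form on the compact Riemannian manifold $(N,g)$, Hodge theory gives a nontrivial class $[T]\in H^3_{dR}(N)$, so $b_3(N)>0$. If instead $\f$ \emph{is} torsion-free, then $d\f=0$ in particular, so $\f$ restricted appropriately yields a closed $3$-form; more directly, in the torsion-free case \eqref{SU3sG2T} forces $w_1^\pm=0$ and $w_3=0$, so $d\omega=0$ and $d\psi_\pm=0$, and then $\psip$ (equivalently $\omega$) is a non-zero harmonic $3$-form (resp. harmonic $2$-form whose square is a harmonic $4$-form) on $N$; using $\psip$ we again get $b_3(N)>0$. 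Either way $b_3(N)>0$.

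Finally, to pass from $N$ to $M=N\times S^1$, I would apply the Künneth formula: $H^3_{dR}(N\times S^1)\cong H^3_{dR}(N)\oplus\big(H^2_{dR}(N)\otimes H^1_{dR}(S^1)\big)$, so $b_3(M)\ge b_3(N)>0$. This completes the proof.

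The main (and really only) obstacle is ensuring the case $\f$ torsion-free is handled cleanly: one must observe that $b_3(N)>0$ still holds there, which follows because a compact manifold carrying a torsion-free $\G_2T$-structure in this product form has nontrivial $\psip$ as a closed $3$-form — or, even more simply, one notes that in this degenerate case the statement of part (\ref{prodclosed3}) already tells us $(\omega,\Psi)$ is torsion-free, and a compact manifold with a non-degenerate closed $3$-form $\psip$ has $b_3>0$. Everything else is a direct citation of Proposition \ref{prop:product-closedT} plus Künneth.
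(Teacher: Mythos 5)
Your proposal is correct and follows essentially the same route as the paper: in the non-parallel case $T$ is a non-zero harmonic $3$-form on $N$ by Proposition \ref{prop:product-closedT}, and in the parallel case the $\SU(3)$-structure is torsion-free so $\psip$ is a non-zero harmonic $3$-form, giving $b_3(N)>0$ in both cases. The extra details you supply (the vanishing-of-$T$ criterion for parallelism, the harmonicity of $\psip$ rather than mere closedness, and K\"unneth for $M$) are exactly the implicit steps of the paper's argument.
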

\begin{proof}
If $\f$ is not torsion-free, then $T$ is a non-zero harmonic $3$-form on $N$ by Proposition \ref{prop:product-closedT}. 
Otherwise, $(\omega,\Psi)$ is torsion-free, $\psip$ is harmonic and the thesis follows. 
\end{proof}

Proposition \ref{prop:product-closedT} singles out a class of SU(3)-structures defined by the conditions 
$w_1^+=0$, $w_2^\pm=0$, $w_4=0$ and $w_5=0$. 
These SU(3)-structures are known as {\em double half-flat} in the literature and they were investigated, for instance, in \cite{Schulte}. 
The next result was partly observed in \cite{ChSw}. 
\begin{corollary}\label{corDHF}
Let $(N,  \omega,\Psi)$ be a $6$-manifold with a  double half-flat $\SU(3)$-structure. 
Then, the $\G_2$-structure $\varphi= \omega \wedge e^7 + \psi_+$  on $N \times S^1$ 
is a solution of the twisted $\G_2$ equation \eqref{eqn:strom7} if and only if $\Delta_g \omega =  4\left(w_1^-\right)^2 \omega$.
\end{corollary}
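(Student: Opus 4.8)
The plan is to specialize Proposition \ref{prop:product-closedT}(\ref{prodclosed3}) to the double half-flat case and then translate the condition \eqref{G2TNSdT} into the Laplacian equation $\Delta_g\omega = 4(w_1^-)^2\omega$ by a direct computation with the structure equations. First I would write down the structure equations of a double half-flat $\SU(3)$-structure: since by definition $w_1^+ = 0$, $w_2^\pm = 0$, $w_4 = 0$ and $w_5 = 0$, they reduce to
\[
d\omega = -\tfrac32\, w_1^-\,\psi_+ + w_3,\qquad d\psi_+ = 0,\qquad d\psi_- = w_1^-\,\omega^2.
\]
Differentiating the last identity and using $d(\omega^2) = 2\, d\omega\wedge\omega = 0$ (because $\psi_+\wedge\omega = 0$ and $w_3\wedge\omega = 0$) yields $dw_1^-\wedge\omega^2 = 0$, so $w_1^-$ is a constant. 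Moreover, since $w_2^- = 0$ and $w_4 = w_5 = 0$, equations \eqref{G2Tprod} show that $\varphi = \omega\wedge e^7 + \psi_+$ is a $\G_2T$-structure whose Lee form is $\theta = \lambda\, e^7$ with $\lambda = 2w_1^-$; hence Proposition \ref{prop:product-closedT} applies with $\eta = e^7$.

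Next I would invoke Proposition \ref{prop:product-closedT}(\ref{prodclosed3}): $\varphi$ solves the twisted $\G_2$ equation \eqref{eqn:strom7} if and only if \eqref{G2TNS}, \eqref{G2TNSdT} and $w_1^+ = 0$ all hold. For a double half-flat structure \eqref{G2TNS} and $w_1^+ = 0$ are automatic, and since $w_2^+ = 0$ and $w_1^+ = 0$ the first equation in \eqref{G2TNSdT} is vacuous while the second reads
\[
d\star_6 w_3 = -\tfrac12\, d\big(w_1^-\,\psi_-\big) = -\tfrac12\,(w_1^-)^2\,\omega^2,
\]
using that $w_1^-$ is constant and $d\psi_- = w_1^-\,\omega^2$. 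Applying $\star_6$ and using $\omega^2 = 2\star_6\omega$ together with $\star_6^2 = \mathrm{Id}$ on $\Omega^2(N)$, this is equivalent to $\delta_g w_3 = (w_1^-)^2\,\omega$.

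Finally I would compute $\Delta_g\omega$ from the structure equations. One has $\delta_g\omega = -\star_6 d\star_6\omega = -\tfrac12\star_6 d(\omega^2) = 0$, hence $\Delta_g\omega = \delta_g d\omega = -\tfrac32\, w_1^-\,\delta_g\psi_+ + \delta_g w_3$. Since $\star_6\psi_+ = \psi_-$ and $d\psi_- = w_1^-\,\omega^2$, we get $\delta_g\psi_+ = -\star_6 d\psi_- = -2\, w_1^-\,\omega$, so that $\Delta_g\omega = 3(w_1^-)^2\,\omega + \delta_g w_3$. Comparing with the previous paragraph, $\varphi$ solves the twisted $\G_2$ equation if and only if $\delta_g w_3 = (w_1^-)^2\,\omega$, i.e. if and only if $\Delta_g\omega = 4(w_1^-)^2\,\omega$. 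The only points requiring care are the signs of the Hodge-star operations in dimension six and the fact that $w_1^-$ is constant without any compactness hypothesis; both are settled by the computations above, so I do not expect a genuine obstacle — the statement is essentially a bookkeeping consequence of Proposition \ref{prop:product-closedT}.
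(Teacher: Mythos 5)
Your proposal is correct and follows essentially the same route as the paper: specialize Proposition \ref{prop:product-closedT}(\ref{prodclosed3}), observe that $w_1^-$ is constant from $d\psim = w_1^-\omega^2$ and $d\omega\W\omega=0$, reduce \eqref{G2TNSdT} to $\delta_g w_3 = (w_1^-)^2\omega$, and compute $\Delta_g\omega = 3(w_1^-)^2\omega + \delta_g w_3$. Your sign checks and the explicit verification via \eqref{G2Tprod} that the Lee form is $\theta = 2w_1^- e^7$ are just the details the paper leaves implicit.
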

\begin{proof}
For every double half-flat SU(3)-structure $(\omega,\Psi)$, the function $w_1^-$ is constant, 
since $d\psim=w_1^-\omega^2$ and $d\omega\W\omega=0$. Moreover, the equations \eqref{G2TNSdT} reduce to the single equation
\[
\delta_g w_3 =  \left(w_1^-\right)^2 \omega.
\]
The conclusion follows then from Proposition \ref{prop:product-closedT} and the identity 
\[
\Delta_g \omega	= - \star_6 d \star_6 d\omega = \frac{3}{2}w_1^- \star_6 d \star_6  \psi_+ - \star_6 d \star_6w_3 
				= 3\left(w_1^-\right)^2 \omega +\delta_g w_3. 
\]
\end{proof}

Under suitable assumptions, it is possible to modify the SU(3)-structure $(\omega,\Psi)$ on $N$ 
in order to obtain a new one whose corresponding $\G_2$-structure on $N\times S^1$ shares the same metric and torsion $3$-form 
with $\f=\omega\wedge\eta+\psip$, but satisfies different properties. 

\begin{proposition}\label{prop:product-str}
Let $N$ be a $6$-manifold with an $\SU(3)$-structure $(\omega,\Psi)$ satisfying \eqref{G2TNS} and \eqref{G2TNSdT} 
with $w_2^+=0$ and $(w_1^+,w_1^-)\neq(0,0)$, 
and let $\varphi= \omega \wedge \eta + \psi_+$ be the induced strong $\G_2T$-structure with Lee form $\theta=2w_1^-\eta$ on $M=N\times S^1$.  
Then,  $M$ admits both a $\G_2$-structure solving the twisted $\G_2$ equation \eqref{eqn:strom7} and a coclosed $\G_2$-structure whose 
associated metric and (harmonic) torsion $3$-form coincide with those associated with $\varphi$. 
Moreover, the $\G_2$-connection with totally skew-symmetric torsion corresponding to $(g_\f,T)$ has zero Ricci tensor. 
\end{proposition}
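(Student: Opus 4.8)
The plan is to obtain the two additional $\G_2$-structures by a constant phase rotation of the complex volume form $\Psi$, keeping $\o$ fixed. Fix a constant $\vartheta\in\R$ and set $\Psi_\vartheta\coloneqq e^{i\vartheta}\Psi$, so that $\psi_+^\vartheta=\cos\vartheta\,\psip-\sin\vartheta\,\psim$ and $\psi_-^\vartheta=\sin\vartheta\,\psip+\cos\vartheta\,\psim$. Then $(\o,\Psi_\vartheta)$ is again an $\SU(3)$-structure with the same fundamental form, almost complex structure and metric $g$, so the induced $\G_2$-structure $\f_\vartheta=\o\W\eta+\psi_+^\vartheta$ on $M=N\times S^1$ satisfies $g_{\f_\vartheta}=g+\eta\otimes\eta=g_\f$. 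The main computation is the transformation of the intrinsic torsion forms: from \eqref{G2TNS} together with $w_2^+=0$ one has $d\psi_\pm=w_1^\pm\,\o^2$, whence (taking the exterior derivative) $w_1^\pm$ are constant and $d\Psi_\vartheta=e^{i\vartheta}(w_1^++iw_1^-)\,\o^2$. Comparing with the $\SU(3)$-torsion equations for $(\o,\Psi_\vartheta)$, and using that $\psi_\pm^\vartheta$ and $\psi_\pm$ span the same real $2$-plane (so that $\Omega^3_{12}$ and the $\o$-component of $d\o$ are unaffected), one finds $w_2^\pm(\vartheta)=0$, $w_4(\vartheta)=w_5(\vartheta)=0$, $w_3(\vartheta)=w_3$, while $\big(w_1^+(\vartheta),w_1^-(\vartheta)\big)$ is the rotation of $(w_1^+,w_1^-)$ by $\vartheta$. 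Since a rotation preserves $(w_1^+)^2+(w_1^-)^2$, conditions \eqref{G2TNS} and \eqref{G2TNSdT} (which given $w_2^+=0$ reduce to $\delta_g w_3=\big((w_1^+)^2+(w_1^-)^2\big)\o$) hold for $(\o,\Psi_\vartheta)$ as well, so by Proposition \ref{prop:product-closedT}(1) every $\f_\vartheta$ is a strong $\G_2T$-structure; and expanding \eqref{Tprod} with $w_2^+=0$ and $\theta_\o=0$ one checks $T_\vartheta=\tfrac12\big(w_1^+(\vartheta)\psi_+^\vartheta+w_1^-(\vartheta)\psi_-^\vartheta\big)+\star_6 w_3=\tfrac12\big(w_1^+\psip+w_1^-\psim\big)+\star_6 w_3=T$, independently of $\vartheta$.

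Next, since $(w_1^+,w_1^-)\neq(0,0)$, one can choose an angle $\vartheta_0$ with $w_1^+(\vartheta_0)=0$; by Proposition \ref{prop:product-closedT}(3) the $\G_2$-structure $\f_{\vartheta_0}$ then solves the twisted $\G_2$ equation \eqref{eqn:strom7}, and by the previous paragraph its associated metric and torsion $3$-form are $g_\f$ and $T$. Choosing instead $\vartheta_1$ with $w_1^-(\vartheta_1)=0$, the Lee form of $\f_{\vartheta_1}$ equals $2w_1^-(\vartheta_1)\,\eta=0$, so $\f_{\vartheta_1}$ is a coclosed $\G_2$-structure, again with metric $g_\f$ and torsion $T$. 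By Proposition \ref{prop:product-closedT}(2), this common torsion $3$-form is closed and coclosed on $N$ (harmonic when $N$ is compact), hence closed and coclosed on the Riemannian product $M$.

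It remains to prove that the associated $\G_2$-connection with totally skew-symmetric torsion is Ricci-flat. The three $\G_2$-structures $\f$, $\f_{\vartheta_0}$, $\f_{\vartheta_1}$ share the metric $g_\f$ and the torsion $3$-form $T$, hence they share the same connection $\nabla=\nabla^{g_\f}+\tfrac12\,g_\f^{-1}T$. This $\nabla$ preserves the $\G_2$-structure, so it admits a $\nabla$-parallel spinor; combined with $dT=0$ (strongness) and $\delta T=0$, the Friedrich--Ivanov integrability condition for parallel spinors of metric connections with totally skew-symmetric torsion \cite{FI} yields $\Ric^\nabla=0$. Alternatively, one may substitute the torsion data of $\f_{\vartheta_0}$ (for which $\tau_0=0$, since $d\f_{\vartheta_0}\W\f_{\vartheta_0}=0$) into the Friedrich--Ivanov expression for the Ricci tensor of the characteristic connection of a $\G_2T$-structure.

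I expect the main obstacle to be the first step: carefully tracking how each intrinsic torsion form of the $\SU(3)$-structure transforms under the phase rotation and verifying that \eqref{G2TNSdT} is preserved. The remaining delicate point is the last step, where one must invoke the integrability identity with its hypothesis $\delta T=0$ — and it is exactly there that the Riemannian-product structure $M=N\times S^1$ (which makes $T$, pulled back from $N$, coclosed on $M$) is used; everything else is a direct appeal to Proposition \ref{prop:product-closedT}.
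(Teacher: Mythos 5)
Your construction is essentially the paper's: rotating $\Psi$ by a constant phase is exactly the paper's replacement $\tpsi_+=a\,\psip+b\,\psim$ with $a^2+b^2=1$, and your verification that the rotated structure keeps $\o$, $g$, $w_3$ and kills/creates $w_1^\pm$ by an orthogonal rotation (so that \eqref{G2TNS}, \eqref{G2TNSdT} persist and $T$ is unchanged) matches the paper's computation, with the two special angles corresponding to the paper's explicit choices $\tpsi_+\propto w_1^-\psip-w_1^+\psim$ (twisted $\G_2$ solution) and $\tpsi_+\propto w_1^+\psip+w_1^-\psim$ (coclosed). The only genuine divergence is the Ricci-flatness step: the paper exploits the coclosed representative and cites \cite[Thm.~5.4]{FI} (for a coclosed $\G_2$-structure the characteristic connection is Ricci-flat iff $T$ is harmonic), whereas you invoke the general Friedrich--Ivanov integrability condition for a $\nabla$-parallel spinor, $\Ric^\nabla(X)\cdot\psi=\tfrac12\,(\iota_X dT)\cdot\psi$, so that $dT=0$ already forces $\Ric^\nabla=0$; this is a legitimate alternative (the characteristic connection of a $\G_2T$-structure does carry a parallel spinor), and it has the small advantage of not needing the coclosed structure for that claim, though it leans on a more general theorem from the same reference. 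One minor imprecision: you justify closedness and coclosedness of $T$ on $N$ by citing Proposition \ref{prop:product-closedT}(\ref{prodclosed2}), which is stated under the hypothesis that $N$ is compact, while Proposition \ref{prop:product-str} does not assume compactness; since $w_2^+=0$ is assumed here, the needed facts ($w_1^\pm$ constant, $dw_3=0$, hence $dT=0$ and $d\star_6T=0$) follow directly from \eqref{SU3sG2T} without compactness, exactly as in the paper's proof, so you should phrase that step as a short computation rather than an appeal to the compact case.
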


\begin{proof}
By Proposition \ref{prop:product-closedT} and the assumption on the intrinsic torsion forms of $(\omega,\Psi)$, we have 
\[
d\omega	= - \frac{3}{2}\,w_1^- \psi_+ + \frac{3}{2}\,w_1^+ \psi_- + w_3,\quad
d\psi_+	= w_1^+ \omega^2 ,\quad
d\psi_-	= w_1^-\omega^2,
\]
and 
\[
\delta_g w_3 = \left( \left( w_1^+\right)^2 + \left( w_1^- \right)^2\right) \omega. 
\] 
In particular,  $w_1^\pm$ are constant, $w_3$ is closed, and the torsion $3$-form of $\f$ 
\[
T = \frac12 w_1^+\psip + \frac12 w_1^- \psim +\star_6 w_3
\]
is harmonic. 

Let $g$ be the metric corresponding to the  SU(3)-structure  $(\omega,\Psi)$. 
Recall that replacing $\psip$ with the $3$-form $\tpsi_+ = a\,\psip+b\,\psim$, where $a^2+b^2=1$, 
one obtains an SU(3)-structure $(\omega,\widetilde{\Psi})$ inducing the same metric $g$  
and such that $\tpsi_- =\star_6\tpsi_+ = a\,\psim-b\,\psip$. 
We will denote the intrinsic torsion forms of $(\omega,\widetilde{\Psi})$ by $\tw_1^\pm, \tw_2^\pm,\tw_3,\tw_4,\tw_5$.  

First, we let
\[
\tpsi_{+} = \frac{1}{\sqrt{\left(w_1^+\right)^2 + \left(w_1^-\right)^2}} \left(w_1^-\psi_+ - w_1^+ \psi_- \right).
\]
Then, a direct computation shows that
\[
\tw_1^- = \sqrt{\left(w_1^+\right)^2 + \left(w_1^-\right)^2},\quad \tw_3=w_3,
\]
and that the remaining intrinsic torsion forms of $(\omega,\widetilde{\Psi})$ are zero. Moreover,  
\[
\delta_g \tw_3 = \delta_g w_3 = \left(\left(w_1^-\right)^2 + \left(w_1^+\right)^2\right)\omega =  \left(\tw_1^-\right)^2\omega. 
\] 
Therefore, by (\ref{prodclosed3}) of Proposition \ref{prop:product-closedT}, 
$\tf = \omega\W\eta+\tpsi_+$ is a strong $\G_2T$-structure on $N\times S^1$ with Lee form $\widetilde\theta= 2\tw_1^- \eta$ 
solving the twisted $\G_2$ equation \eqref{eqn:strom7}. 
Finally, we have  $g_{\tf} = g + \eta\otimes \eta = g_\f$, and the torsion $3$-form corresponding to $\tf$ is 
\[
\widetilde{T}=  \frac{1}{2} \tw_1^-\tpsi_- + \star_6 \tw_3 =  \frac{1}{2}w_1^+\psi_+ +\frac12 w_1^- \psi_- + \star_6 w_3,  
\] 
and thus it coincides with $T$. 

As for the coclosed $\G_2$-structure, it is sufficient choosing 
\[
\tpsi_{+} = \frac{1}{\sqrt{\left(w_1^+\right)^2 + \left(w_1^-\right)^2}} \left(w_1^+\psi_+ + w_1^- \psi_- \right),
\]
so that the non-zero intrinsic torsion forms are $\tw_1^+ = \sqrt{\left(w_1^+\right)^2 + \left(w_1^-\right)^2}$ and $\tw_3=w_3$. Then,  
$\tf$ is a coclosed $\G_2$-structure such that $g_{\tf}=g_\f$ and with harmonic torsion $3$-form 
\[
\widetilde{T}=  \frac{1}{2} \tw_1^+\tpsi_+ + \star_6 \tw_3 =  \frac{1}{2}w_1^+\psi_+ +\frac12 w_1^- \psi_- + \star_6 w_3 = T. 
\] 
By \cite[Thm.~5.4]{FI}, the $\G_2$-connection with totally skew symmetric torsion $T$ corresponding to a coclosed $\G_2$-structure 
has zero Ricci tensor if and only if $T$ is harmonic, hence the last claim follows. 
\end{proof}

\begin{remark} 
Recall from (\ref{prodclosed1}) of Proposition \ref{prop:product-closedT} that the condition $w_2^+=0$ always holds when $N$ is compact and 
the $\SU(3)$-structure satisfies \eqref{G2TNS} and \eqref{G2TNSdT}. 
\end{remark}

%
\section{Irreducible compact homogeneous spaces}\label{sec:Irreducible}

Let $M=\G/\H$ be a seven-dimensional compact, connected homogeneous space 
with respect to the almost effective action of a connected Lie group $\G$ and assume that $\H$ is connected. 
By \cite[Thm.~1]{Reid}, if $M$ admits $\G$-invariant $\G_2$-structures and it is {\em irreducible}, namely it is not the product of a circle 
and another homogeneous space, then $M$ must be one of the spaces listed in Table \ref{TabIrred}, up to a covering. 
In Table \ref{TabIrred}, $N^{p,q}$ denotes an Aloff-Wallach space, $B^7$ denotes the Berger space and $V^{5,2}$ denotes the Stiefel manifold of 
orthonormal pairs in $\R^5$. 

The spaces listed in Table \ref{TabIrred} are simply connected and have vanishing third Betti number. 
In particular, the Lee form of an invariant $\G_2$-structure on any of them cannot be closed unless it vanishes. 
Furthermore, if an invariant $\G_2T$-structure with vanishing Lee form exists, then it cannot be strong by Proposition \ref{lem:co-closed}. 
Consequently, it is not possible to obtain invariant solutions to the twisted $\G_2$ equation \eqref{eqn:strom7}.

\begin{table}[h!]
\centering
\renewcommand\arraystretch{1.2}
\begin{tabular}{|c|c|c|c|}  
\hline
& $\G$& $\H$& $M=\G/\H$ \\ \hline 
1&$\SU(3)$& $\U(1)$ & $N^{p,q},~{p,q \in \Z,~ p\geq q\geq0,~\mbox{gcd}(p,q)=1}$ \\ \hline
2&$\SU(3)\times \U(1)$& $\U(1)^2$ & $N^{p,q},~{p,q \in \Z,~p\geq q\geq0,~\mbox{gcd}(p,q)=1}$ \\ \hline
3&$\SU(3)\times  \SU(2)$& $\SU(2) \times \U(1)$ & $N^{1,1}$ \\ \hline
4&$\Sp(2)$& $\Sp(1)$ & $S^7$ \\ \hline
5&$\SU(4)$& $\SU(3)$ & $S^7$ \\ \hline
6&$\Sp(2)\times \U(1)$& $\Sp(1)\times \U(1)$ & $S^7$ \\ \hline
7&$\Sp(2)\times \Sp(1)$& $\Sp(1)\times \Sp(1)$ & $S^7$ \\ \hline
8&$\Spin(7)$& $\G_2$ & $S^7$ \\ \hline
9&$\SO(5)$& $\SO(3)$ & $B^7$ \\ \hline
10&$\SO(5)$& $\SO(3)$ & $V^{5,2}$ \\ \hline
11&$\SU(2)^3$& $\U(1)^2$ & $Q^{1,1,1}$ \\ \hline
12&$\SU(3)\times \SU(2)$& $\SU(2)  \times \U(1)$ & $M^{1,1,0}$ \\ \hline
\end{tabular}  \vspace{0.2cm}
\caption{Irreducible compact homogeneous spaces admitting invariant $\G_2$-structures, up to a covering \cite{Reid}. }\label{TabIrred}
\end{table}

We now prove that none of these spaces admits invariant strong $\G_2T$-structures. 
We first deal with cases $1$-$7$ of Table \ref{TabIrred},  
and then we focus on cases $8$-$12$.  
For each space $M=\G/\H$, we will consider a reductive decomposition $\frg = \frm \oplus \frh$ of the Lie algebra of $\G$, 
where the $\Ad(\H)$-invariant subspace $\frm$ is identified with the tangent space to $M$ at the identity coset, 
and we will identify $\G$-invariant tensors on $M$ with $\Ad(\H)$-invariant tensors on $\frm$.

\subsection*{Cases 1-7 of Table \ref{TabIrred}} 
We first rule out the Aloff-Wallach spaces $N^{p,q}$, with $p\neq q$, and then the Aloff-Wallach space $N^{1,1}$ 
and the $7$-sphere $\Sp(2)/\Sp(1)$. 
Once we are done, we can easily discard the cases $2$-$3$ and $5$-$7$. 

\subsubsection*{\underline{The Aloff-Wallach spaces $N^{1,0}$ and $N^{p,q}$ with $p>q>0$}}\ 

\smallskip
\noindent
Let $p,q\in\Z$. Recall that $N^{p,q}=\SU(3)/\U(1)_{p,q}$, where $\U(1)_{p,q}$ is diagonally embedded into $\SU(3)$ as 
\begin{equation}\label{AWUSUemb}
\U(1)_{p,q} = \left\{ \diag\left(z^p,z^q,z^{-p-q}\right) \st z \in \U(1)\right\} \subset \SU(3). 
\end{equation}
It is not restrictive assuming that $p\geq q\geq0$ and $\mbox{gcd}(p,q)=1$. 
There are three relevant situations that must be considered: the exceptional Aloff-Wallach spaces $N^{1,1}$ and $N^{1,0}$ and the 
generic Aloff-Wallach spaces $N^{p.q}$ with $p>q>0$. We begin with the generic case and the exceptional case $N^{1,0}$.

\begin{proposition}\label{PropAWpq}
The generic Aloff-Wallach space $N^{p,q}=\SU(3)/\U(1)_{p,q}$, with $p> q>0$, and the exceptional Aloff-Wallach space $N^{1,0}$ 
admit a $3$-parameter family of $\SU(3)$-invariant $\G_2T$-structures whose 
torsion $3$-form is never closed. 
\end{proposition}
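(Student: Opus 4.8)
The plan is to work entirely at the level of the reductive decomposition $\frg = \frsu(3) = \frm \oplus \frh$, where $\frh = \Lie(\U(1)_{p,q})$ is the $1$-dimensional subalgebra spanned by $\diag(ip, iq, -i(p+q))$ (suitably normalized), and $\frm$ is the orthogonal complement with respect to (a multiple of) the Killing form. First I would write down an explicit adapted basis $(e^1,\ldots,e^7)$ of $\frm^*$: the standard Gell-Mann-type basis of $\frsu(3)$ restricted to $\frm$ splits into the Cartan direction orthogonal to $\frh$ (call it $e^7$, up to scaling) and three $2$-planes $\{e^1,e^2\}$, $\{e^3,e^4\}$, $\{e^5,e^6\}$ corresponding to the root spaces, on which $\U(1)_{p,q}$ acts with weights $p-q$, $p+2q$ (equivalently $2p+q$), respectively. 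I would then compute the structure constants, i.e., the differentials $de^i$ in the Chevalley--Eilenberg complex of $\frsu(3)$, projected to $\Lambda^2\frm^*$; these are standard and only the nonzero weights matter for which wedge terms survive $\Ad(\H)$-invariance.

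Next I would determine the space of $\Ad(\H)$-invariant definite $3$-forms. Because the three root $2$-planes carry pairwise distinct nonzero $\U(1)_{p,q}$-weights in the generic case $p>q>0$ (and also for $N^{1,0}$, where the weights are $1,1,2$ — here two weights coincide but I would check the invariant $3$-forms still form the expected family, since the coincidence $p-q = q$ only happens for $p=2q$, excluded by $\gcd=1$ unless $q=1,p=2$; for $N^{1,0}$ the weights are $1,-1,2$ up to sign so still the relevant monomials are forced), the invariant $3$-forms are spanned by $e^{127}$, $e^{347}$, $e^{567}$, $e^{135}$-type terms, and a small number of others fixed by the weight bookkeeping. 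I expect the $\G_2$-adapted ansatz to be $\varphi = a_1 e^{127} + a_2 e^{347} + a_3 e^{567} + (\text{a fixed combination of } e^{135}, e^{136},\ldots)$ with the definiteness/normalization constraints cutting this down to a $3$-parameter family of genuine $\G_2$-structures. I would then impose $\tau_2 = 0$ (equivalently $d\star\varphi = \theta\wedge\star\varphi$, see the discussion after \eqref{TorsionForms}): since every invariant form is automatically in the kernel of $\delta$ here and the computation is finite-dimensional, this should either be automatic or cut out exactly the claimed $3$-parameter subfamily of $\G_2T$-structures.

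The heart of the argument is then to show $dT \neq 0$ for every member of this family. I would use the formula \eqref{Ttorforms}, $T = \tfrac16\tau_0\varphi + \star(\tau_1\wedge\varphi) - \tau_3$, computing $\tau_0$, $\tau_1$, $\tau_3$ explicitly as functions of the three parameters from \eqref{TorsionForms}, then compute $dT \in \Lambda^4\frm^*$ directly in the Chevalley--Eilenberg complex. The claim is that $dT$, viewed as a polynomial-valued $4$-form in the parameters, has no common zero inside the open set where $\varphi$ is definite. Concretely I would extract one or two scalar components of $dT$ (coefficients of convenient basis $4$-forms like $e^{1234}$, $e^{1256}$, $e^{3456}$, or $e^{1347}$, depending on which weights make the structure constants nonzero) and show the resulting system of polynomial equations in the parameters is inconsistent on the definite locus — e.g. by exhibiting that one component is a nonzero multiple of a product of the parameters each of which must be nonzero for nondegeneracy. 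The main obstacle I anticipate is purely computational bookkeeping: getting the structure constants and the Hodge star right in the non-orthonormal-looking invariant basis, and then keeping track of enough components of $dT$ to rule out degenerate cancellations. A secondary subtlety is the exceptional space $N^{1,0}$, where the weight multiplicities differ from the generic case, so the space of invariant forms (and hence the family of $\G_2T$-structures) must be recomputed separately — but the same strategy of isolating a forced-nonzero component of $dT$ should apply. I would also double-check via Proposition \ref{lem:co-closed} / Corollary \ref{Corb3N} as a consistency sanity check: these spaces have $b_3 = 0$, so a strong $\G_2T$-structure would force $T = 0$ and hence $\varphi$ parallel and the metric flat, contradicting that $N^{p,q}$ is a non-flat homogeneous space — this gives an a priori reason the answer must be "never closed" and a way to cross-validate the explicit computation.
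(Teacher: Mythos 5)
Your plan is essentially the paper's proof: fix the reductive decomposition of $\frsu(3)$ determined by $\U(1)_{p,q}$, identify the $\Ad(\H)$-invariant definite $3$-forms by weight bookkeeping (a $5$-parameter family in the generic case), impose the $\G_2T$ condition, and compute $T$ and $dT$ explicitly in the invariant Chevalley--Eilenberg complex. That is exactly what is done in the proof of Proposition \ref{PropAWpq}: the condition $\tau_2=0$ (not definiteness or any normalization, which are open conditions) is what cuts the family down to three parameters, via $a_1=-a_2=a_3$, $a_5\neq 0$ in \eqref{G2TAWpq}, and the resulting torsion \eqref{TAWpq} is manifestly non-closed. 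So the core of your proposal is sound and matches the paper.

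Two of your auxiliary claims are wrong, and one of them matters. First, for $N^{1,0}$ the weights on the three nontrivial $2$-planes are $1,1,2$ up to sign, so two of the modules are equivalent and the space of invariant $3$-forms is strictly larger than in the generic case: there is an extra two-dimensional summand $\frm_0^*\otimes(\frm_1^*\otimes\frm_3^*)^{\H}$ (the paper's $\gamma_3,\gamma_4$). The relevant monomials are therefore \emph{not} ``forced''; one must show, as the paper does, that the $\G_2T$ equations annihilate the two extra coefficients before one lands in the same $3$-parameter family. Your later sentence acknowledging that $N^{1,0}$ must be recomputed separately is the correct instinct, but the parenthetical claim in your first paragraph would, if trusted, skip a necessary step. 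Second, your proposed ``a priori'' cross-check is invalid: $b_3(N^{p,q})=0$ together with $dT=0$ only makes $T$ exact, not zero. Proposition \ref{lem:co-closed} forces $T=0$ only in the coclosed, constant-$\tau_0$ situation, and indeed the paper's exclusion of invariant strong $\G_2T$-structures on $N^{1,1}$ and $S^7$ (Proposition \ref{PropNonExAW11S7}) requires a genuine argument with $T$ lying in the span of the exact invariant $3$-forms precisely because exactness alone is not a contradiction. A final minor slip: invariant forms on $\G/\H$ are not automatically coclosed for an arbitrary invariant metric (the paper's own computation has $d\star\f\neq0$), though this does not affect your strategy.
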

\begin{proof}
Let $\G \coloneqq \SU(3)$ and $\H\coloneqq\U(1)_{p,q}$. 
We consider a reductive decomposition $\frg= \frm \oplus \frh$, with $\frm$ spanned by 
\begin{equation}\label{AWpqbase}
\begin{split}
& 
e_1= \frac13\, \diag\left(i(-p-2q),i(2p+q),i(q-p)\right), \\
&
e_2 = \begin{pmatrix}
0 & 1 & 0 \\
-1 & 0 & 0 \\
0 & 0 & 0 
\end{pmatrix},\quad
e_3=  \begin{pmatrix}
0 & i & 0 \\
i & 0 & 0 \\
0 & 0 & 0  
\end{pmatrix},\quad 
e_4 = \begin{pmatrix}
0 & 0 & 1 \\
0 & 0 & 0 \\
-1 & 0 & 0 
\end{pmatrix},\\
&
e_5 = \begin{pmatrix}
0 & 0 & i \\
0 & 0 & 0 \\
i & 0 & 0 
\end{pmatrix},\quad
e_6=\begin{pmatrix}
0 & 0 & 0 \\
0 & 0 & 1 \\
0 & -1 & 0 
\end{pmatrix},\quad
e_7=  \begin{pmatrix}
0 & 0 & 0 \\
0 & 0 & i \\
0 & i & 0  \end{pmatrix},
\end{split}
\end{equation}
and $\frh$ spanned by $e_8=\diag(ip,iq,-i(p+q))$. 
We see that $\ad(e_8)e_1=0$ and 
\[
\begin{split}
& \ad(e_8)e_2=(p-q)e_3,\quad \ad(e_8)e_3=(q-p)e_2,\quad \ad(e_8)e_4=(2p+q)e_5,\\ 
&\ad(e_8)e_5=-(2p+q)e_4,\quad \ad(e_8)e_6= (2q+p)e_7,\quad \ad(e_8)e_7=-(2q+p)e_6. 
\end{split}
\]
Thus, the decomposition of $\frm$ into $\Ad(\H)$-invariant irreducible summands is $\frm= \frm_0 \oplus \frm_1 \oplus \frm_2 \oplus \frm_3$, 
where $\frm_0 = \langle e_1 \rangle$ is a trivial submodule, and $\frm_1=\langle e_2,e_3\rangle$, $\frm_2=\langle e_4,e_5\rangle$ and 
$\frm_3=\langle e_6,e_7\rangle$ are non-trivial submodules. 
These modules are inequivalent when $p>q>0$, while $\frm_1\cong\frm_3$ when $p=1$ and $q=0$.

Let $(e^1,\dots,e^7)$ denote basis of $\frm^*$ dual to $(e_1,\ldots,e_7)$. By the Koszul formula, for every $x,y\in\frm$ we have
\[
de^i(x,y) = -e^i([x,y]_\frm), 
\]
and thus
\begin{equation}\label{deAWpq}
\begin{split}
de^1	&= \frac{3}{p^2+pq+q^2} \left((p+q)e^{23} + q e^{45}-p e^{67} \right), \\
de^2	&= -(p+q) e^{13}  + e^{46} + e^{57},\\
de^3	&= (p+q)e^{12} -e^{47} + e^{56}, \\
de^4	&= -q e^{15} - e^{26} + e^{37}, \\
de^5	&= q e^{14} - e^{27} - e^{36}, \\
de^6	&= p e^{17} + e^{24} +e^{35}, \\
de^7	&= -p e^{16} +e^{25} - e^{34}.
\end{split}
\end{equation}

Let us first focus on the case $p>q>0$. 
The space of $\Ad(\H)$-invariant $3$-forms on $\frm$ decomposes as follows
\[
(\Lambda^3\frm^*)^{\H} \cong \bigoplus_{i=1}^3\left(\frm_0^*\otimes \Lambda^2\frm_i^*\right) \oplus (\frm_1^*\otimes\frm_2^*\otimes\frm_3^*)^\H, 
\]
where the $2$-dimensional summand $ (\frm_1^*\otimes\frm_2^*\otimes\frm_3^*)^\H$ is spanned by 
\[
\gamma_1 \coloneqq e^{246} + e^{257} - e^{347} + e^{356} ,\quad \gamma_2 \coloneqq e^{247} - e^{256} + e^{346} +e^{357}.
\]
In particular, $\gamma_2 = d\left(e^{23}+e^{45}+e^{67}\right)$ is the unique $\Ad(\H)$-invariant closed (hence exact) 3-form on $\frm$. 
The generic $\Ad(\H)$-invariant $3$-form on $\frm$ is 
\[
\f = a_1 e^{123} + a_2 e^{145} + a_3 e^{167} + a_4\gamma_1+a_5\gamma_2,
\]
for some $a_1,\ldots,a_5\in\R$. We fix the orientation $e^{1234567}$, and we compute the symmetric bilinear form 
$b_\f: \frm \times \frm \rightarrow \Lambda^7\frm^*\cong \R$ introduced in Section \ref{SecDefForm7}, obtaining 
\[
\begin{split}
b_\f 	&= 	a_1a_2a_3 e^1 \odot e^1 \\
	&\quad- (a_4^2+a_5^2) \left[ a_1\left(e^2\odot e^2+e^3\odot e^3\right) 
		-a_2\left(e^4\odot e^4+e^5\odot e^5\right) + a_3\left(e^6\odot e^6+e^7\odot e^7\right)\right],
\end{split}
\]
where $e^i\odot e^j\coloneqq \frac12\left(e^i\otimes e^j + e^j\otimes e^i\right)$. 
From this, we see that $\f$ defines a $\G_2$-structure if and only if $a_4^2+a_5^2\neq0$ and either $a_1<0,a_3<0,a_2>0$ or $a_1>0,a_3>0,a_2<0$. 
When these conditions hold, we can compute the metric $g_\f$ induced by $\f$ 
and the $4$-form $\star\f$. We obtain 
\[
g_\f = (\det b_\f)^{-1/9}b_\f = 	\left(a_1 a_2 a_3\right)^{-1/3}\left(a_4^2 + a_5^2\right)^{-2/3}b_\f, 
\]
and
\[
\begin{split}
\star\f &= 	\left(a_1 a_2 a_3\right)^{1/3}\left(a_4^2 + a_5^2\right)^{2/3}\left(\frac{1}{a_1}\, e^{4567}  
		+\frac{1}{a_2}\, e^{2367}  
		+\frac{1}{a_3}\, e^{2345}  \right)\\
	&\quad +\left(\frac{a_1 a_2 a_3}{a_4^2 + a_5^2}\right)^{1/3} \left( 
		a_4 e^1\W\gamma_2 -a_5 e^1\W\gamma_1 \right). 
\end{split}
\]
 
We now look for (strong) $\G_2T$-structures belonging to this $5$-parameter family of $\Ad(\H)$-invariant $\G_2$-structures.
The Lee form must necessarily be $\theta = \mu e^1$, for some $\mu\in\R\smallsetminus\{0\}$. 
Using \eqref{deAWpq}, we obtain 
\[
\begin{split}
d\star\f  &= a_5 \left(\frac{a_1 a_2 a_3}{a_4^2 + a_5^2}\right)^{1/3}  e^1\W d\gamma_1 = 4 a_5 \left(\frac{a_1 a_2 a_3}{a_4^2 + a_5^2}\right)^{1/3} \left(e^{12367} - e^{12345} - e^{14567}\right),\\
\theta\W\star\f &= \mu \left(a_1 a_2 a_3\right)^{1/3}\left(a_4^2 + a_5^2\right)^{2/3} 
\left( \frac{1}{a_1}\, e^{14567} + \frac{1}{a_2}\,e^{12367} +\frac{1}{a_3}\, e^{12345}\right),
\end{split} 
\] 
Consequently, $\f$ is a $\G_2T$-structure with non-zero Lee form if and only if
\begin{equation}\label{G2TAWpq}
a_1 = -a_2 = a_3,\quad \mu = -4\frac{a_1a_5}{a_4^2+a_5^2}, \quad a_5\neq0. 
\end{equation}
Thus, there is a $3$-parameter family of $\Ad(\H)$-invariant $\G_2T$-structures on $\frm$ 
depending on two non-zero real parameters $a_1$, $a_5$ and on a real parameter $a_4$.  
The intrinsic torsion forms $\tau_0$ and $\tau_1$ of these structures are the 
following  
\[
\tau_0 = \frac17 \star(d\f\W\f) = \frac{24}{7}\frac{a_4}{(a_4^2+a_5^2)^{2/3}},\qquad 
\tau_1 = \frac14\, \theta = -\frac{a_1a_5}{a_4^2+a_5^2}\,e^1,
\]
and the torsion form $\tau_3$ can be obtained from $d\f$ using the first equation in \eqref{TorsionForms}. 

We can now compute the torsion $3$-form $T$ of $\f$ using the expression \eqref{Ttorforms}. We have
\begin{equation}\label{TAWpq}
T = 	\frac{3\,a_1^2}{(p^2+pq+q^2)\left(a_4^2+a_5^2\right)^{2/3}}\left( (p+q) e^{123} + q e^{145} -p e^{167} \right) 
	+\left(a_4^2+a_5^2\right)^{1/3}\,\gamma_1,
\end{equation}
which is clearly not closed for any choice of the parameters $a_1\neq0$, $a_5\neq0$ and $a_4\in\R$. 

In the case when $p=1$ and $q=0$, the space of $\Ad(\H)$ invariant $3$-forms on $\frm$ is 
\[
(\Lambda^3\frm^*)^{\H} \cong \bigoplus_{i=1}^3\left(\frm_0^*\otimes \Lambda^2\frm_i^*\right) \oplus (\frm_1^*\otimes\frm_2^*\otimes\frm_3^*)^\H\oplus \frm_0\otimes(\frm_1\otimes\frm_3)^{\H}, 
\]
where the additional summand $ \frm_0\otimes(\frm_1\otimes\frm_3)^{\H}$ is spanned by the closed $3$-forms 
$\gamma_3 \coloneqq e^{126}+e^{137}$ and $\gamma_4 \coloneqq e^{127}-e^{136}$. Thus, the generic $\Ad(\H)$-invariant $3$-form is 
\[
\f = a_1 e^{123} + a_2 e^{145} + a_3 e^{167} + a_4\gamma_1+a_5\gamma_2 + a_6\gamma_3+a_7\gamma_4. 
\]
Now, a similar discussion as above shows that $\f$ defines a $\G_2T$-structure if and only if $a_6=0=a_7$ and the conditions 
\eqref{G2TAWpq} are satisfied. With analogous computations, we obtain that the torsion $3$-form $T$ is given by \eqref{TAWpq}
with $p=1$ and $q=0$. In particular, it is not closed. 
\end{proof}

\subsubsection*{\underline{The exceptional Aloff-Wallach space $N^{1,1}$ and the $7$-sphere $S^7=\Sp(2)/\Sp(1)$ }}\ \vspace{0.2cm}  

\noindent We now show that neither the exceptional Aloff-Wallach space 
$N^{1,1} = \SU(3)/\U(1)_{1,1}$ nor the $7$-sphere $S^7=\Sp(2)/\Sp(1)$ admit invariant strong $\G_2T$-structures. 
Since these spaces share some similarities, we will investigate them simultaneously using a unified notation that we are going to introduce. 

\smallskip

Consider the exceptional Aloff-Wallach space $N^{1,1} = \SU(3)/\U(1)_{1,1}$, 
where the embedding $\U(1)_{1,1}\subset \SU(3)$ is described in \eqref{AWUSUemb} 
for $p=q=1$, and let $\G=\SU(3)$ and $\H=\U(1)_{1,1}$. 
We choose a reductive decomposition $\frg= \frm \oplus \frh$  
with $\frh= \la e_8 = \diag(i,i,-2i) \ra$ and 
$\frm$ spanned by
\[
\begin{split}
& 
e_1= \diag\left(i,-i,0\right), \\
&
e_2 = \begin{pmatrix}
0 & -1 & 0 \\
1 & 0 & 0 \\
0 & 0 & 0 
\end{pmatrix},\quad
e_3=  \begin{pmatrix}
0 & -i & 0 \\
-i & 0 & 0 \\
0 & 0 & 0  
\end{pmatrix},\quad 
e_4 = \begin{pmatrix}
0 & 0 & 1 \\
0 & 0 & 0 \\
-1 & 0 & 0 
\end{pmatrix},\\
&
e_5 = \begin{pmatrix}
0 & 0 & 0 \\
0 & 0 & -i \\
0 & -i & 0 
\end{pmatrix},\quad
e_6=\begin{pmatrix}
0 & 0 & 0 \\
0 & 0 & -1 \\
0 & 1 & 0 
\end{pmatrix},\quad
e_7=  \begin{pmatrix}
0 & 0 & i \\
0 & 0 & 0 \\
i & 0 & 0  \end{pmatrix},
\end{split}
\]
(notice that this is obtained from \eqref{AWpqbase} via an obvious basis change).  
We denote by $(e^1,\dots,e^7)$ the dual basis of $\frm^*$.

We have the $\Ad(\H)$-invariant decomposition $\frm= \frm_3 \oplus \frm_4$, where $\H$ acts trivially on 
$\frm_3= \la e_1, e_2, e_3 \ra$ and 
$\frm_4= \frm_1 \oplus \frm_2$ is the sum of the irreducible equivalent submodules 
$\frm_1  = \la e_4, e_7 \ra $ and $\frm_2 = \la e_5,e_6 \ra$.  
Consequently, the decomposition $\frm= \frm_3 \oplus \frm_4$ is orthogonal 
with respect to any $\Ad(\H)$-invariant inner product. 
Moreover, every $\Ad(\H)$-invariant inner product $g$ on $\frm$ has the following expression 
\begin{equation}\label{IPAW11}
\begin{split}
g	&= h_{\frm_3} +  a^2 \left(e^4\odot e^4 + e^7\odot e^7\right) + b^2 \left(e^5\odot e^5 + e^6\odot e^6\right) \\
	&\quad+2r\left(e^4\odot e^5 - e^6\odot e^7\right) +2s\left(e^4\odot e^6 + e^5\odot e^7\right),
\end{split}
\end{equation}
where $h_{\frm_3}$ is an arbitrary inner product on $\frm_3$, $a,b$ are positive constants and $r,s\in\R$ satisfy $r^2+s^2 < a^2b^2$. 

The space of $\Ad(\H)$-invariant $4$-forms decomposes as follows
\[
(\L^4 \frm^*)^\H \cong \L^4 \frm_4^* \oplus \L^2 \frm_3^* \otimes (\L^2 \frm_4^*)^\H, 
\]
where 
\[
(\L^2 \frm_4^*)^\H \cong \L^2 \frm_1^* \oplus \L^2 \frm_2^* \oplus (\frm_1 \otimes \frm_2)^\H,
\]
and 
\[
\L^2 \frm_1^* = \left\la e^{47}\right\ra,\quad \L^2 \frm_2^* = \left\la e^{56}\right\ra,\quad
 (\frm_1 \otimes \frm_2)^\H =  \left\la \o_2 \coloneqq  e^{46}- e^{57},\o_3 \coloneqq e^{45}+ e^{67} \right\ra.
\]
We also let $\o_0 \coloneqq e^{47}- e^{56}$ and $\o_1\coloneqq e^{47}+ e^{56}$, so that $(\L^2 \frm_4^*)^\H  = \la \o_0,\o_1,\o_2,\o_3 \ra$. 

\smallskip

As for the $7$-sphere $S^7=\Sp(2)/\Sp(1)$, we let 
$\G=\Sp(2)$ and $\H=\Sp(1)$, with the latter acting on the first summand of $\R^8=\mathbb{H}\oplus \mathbb{H}$. 
We have a reductive decomposition $\frg= \frm \oplus \frh$, with $\frm$ spanned by 
\[
e_1 = \begin{pmatrix}
0 & 0 \\
0 & i 
\end{pmatrix},\quad 
e_2 = \begin{pmatrix}
0 & 0 \\
0 & j
\end{pmatrix},\quad
e_3=\begin{pmatrix}
0 & 0 \\
0 & k
\end{pmatrix}, 
\]
\[
e_4=  \frac{1}{\sqrt{2}}\begin{pmatrix}
0 & i \\
i & 0
\end{pmatrix},\quad 
e_5 = \frac{1}{\sqrt{2}}\begin{pmatrix}
0 & j \\
j & 0
\end{pmatrix},\quad 
e_6=  \frac{1}{\sqrt{2}}\begin{pmatrix}
0 & k \\
k & 0\end{pmatrix},\quad 
e_7=  \frac{1}{\sqrt{2}}\begin{pmatrix}
0 & 1 \\
-1 & 0\end{pmatrix}, 
\]
and $\frh$ spanned by
\[
e_8 = \begin{pmatrix}
i & 0 \\
0 & 0 
\end{pmatrix}, \quad 
e_9=\begin{pmatrix}
j & 0 \\
0 & 0
\end{pmatrix}, \quad 
e_{10}=\begin{pmatrix}
k & 0 \\
0 & 0
\end{pmatrix}.
\]
 
The tangent space $\frm$ to $\G/\H$ at the identity coset decomposes into $\Ad(\H)$-invariant summands as $\frm= \frm_3 \oplus \frm_4 $, 
with $\frm_3= \la e_1, e_2, e_3 \ra$ and $\frm_4= \la e_4, e_5,e_6, e_7 \ra$. 
$\H$ acts trivially on $\frm_3$ and irreducibly on $\frm_4$, which is isomorphic to the $\H$-module $\mathbb{H}\cong\R^4$. 
Consequently, the generic $\Ad(\H)$-invariant inner product $g$ on $\frm$ has the following expression  
\begin{equation}\label{IPS7}
g	= h_{\frm_3} +  a^2 \left(e^4\odot e^4+ e^5\odot e^5 + e^6\odot e^6 + e^7\odot e^7\right),
\end{equation}
where $h_{\frm_3}$ is an arbitrary inner product on $\frm_3$ and $a$ is a positive constant. 
In particular, the splitting $\frm= \frm_3 \oplus \frm_4 $ is $g$-orthogonal.

As for the space of $\Ad(\H)$-invariant $4$-forms, we have 
\[
(\L^4 \frm^*)^\H= \L^4 \frm_4^* \oplus \L^2 \frm_3^* \otimes (\L^2 \frm_4^*)^\H.
\] 
Since $\frm_4$ and $\R^4$ are $\H$-isomorphic, we have that $(\L^2 \frm_4^*)^\H\cong \L^2_+(\R^4)^*$ and 
a basis is given by the $\Ad(\H)$-invariant $2$-forms 
$\o_1 \coloneqq e^{47}+ e^{56}$, $\o_2 \coloneqq e^{46}-e^{57}$, $\o_3\coloneqq e^{45}+ e^{67}$. 
In this case, we let $\omega_0\coloneqq 0$. 

Note that $\o_1\W\o_2 = \o_1\W\o_3=\o_2\W\o_3 = 0$ and that $\omega_k^2= 2  e^{4567}$ for $k=1,2,3$. 

\smallskip

In the following, $(\G,\H)$ denotes either $(\Sp(2),\Sp(1))$ or $(\SU(3),\U(1)_{1,1})$, 
and the tangent space to $\G/\H$ at the identity coset is identified with $\frm = \frm_3\oplus\frm_4$.  
In both cases, $\frm_3=\la e_1,e_2,e_3\ra$ 
and $\frm_4=\la e_4,e_5,e_6,e_7\ra$ are orthogonal with respect to any $\Ad(\H)$-invariant inner product on $\frm$. 

Using the notation introduced above, we have
\[
de^1= -2 e^{23}- \o_1,  \quad
de^2= 2 e^{13} + \o_2,  \quad
de^3= -2 e^{12} - \o_3,
\]
whence we deduce the following
\begin{align} \label{ec:diffs-2-1}
de^{23}&= - \frac{1}{2} d\o_1 =  e^2\W \o_3 + e^3\W \o_2 ,  \\\label{ec:diffs-2-2}
de^{13}&= - \frac{1}{2} d\o_2 =  e^1\W \o_3 - e^3\W \o_1    , \\\label{ec:diffs-2-3}
de^{12}&= - \frac{1}{2} d\o_3 =  - e^1\W\o_2 - e^2\W \o_1  .
\end{align}
Moreover $d(e^{4567}) = 0 = d\o_k \wedge \o_k$, for $k=1,2,3$, and $d\o_0=0$. 

\smallskip

If there exists an $\Ad(\H)$-invariant $\G_2T$-structure on $\frm$ with non-zero Lee form $\theta$, then $\theta\in\frm_3^*$ and 
it is not restrictive assuming that $\theta = \lambda e^1$, for some non-zero constant $\lambda$, by the next lemma. 
\begin{lemma} \label{lem:Lee-e1}
If there exists an $\Ad(\H)$-invariant $\G_2T$-structure on $\frm$ with non-zero Lee form, 
then there exists an $\Ad(\H)$-invariant $\G_2T$-structure whose Lee form is proportional to $e^1$.
\end{lemma}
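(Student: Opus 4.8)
The plan is to produce the desired structure from a given one by pulling it back along an automorphism of the homogeneous space that rotates the ``flat directions'' $\frm_3$, while leaving the set of $\Ad(\H)$-invariant $\G_2T$-structures invariant. First I would pin down where the Lee form can live. The Lee form $\theta=4\tau_1$ of any $\Ad(\H)$-invariant $\G_2T$-structure on $\frm$ is an $\Ad(\H)$-invariant $1$-form, hence an element of $(\frm^*)^\H$. Since $\H$ acts trivially on $\frm_3$ and fixes no non-zero vector of $\frm_4$ --- for $S^7=\Sp(2)/\Sp(1)$ the module $\frm_4\cong\mathbb H$ is the irreducible standard representation of $\Sp(1)$, while for $N^{1,1}=\SU(3)/\U(1)_{1,1}$ one has $\frm_4=\frm_1\oplus\frm_2$ with both $\frm_1,\frm_2$ of non-zero weight --- it follows that $(\frm^*)^\H=\frm_3^*$, so that $\theta=\lambda_1e^1+\lambda_2e^2+\lambda_3e^3$ with $(\lambda_1,\lambda_2,\lambda_3)\neq(0,0,0)$.

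Next I would exhibit, in each case, a connected subgroup $\K\subset\G$ centralising $\H$ whose adjoint action exhausts the rotations of $\frm_3$. For $S^7$ take $\K=\left\{\left(\begin{smallmatrix}1&0\\0&p\end{smallmatrix}\right):p\in\Sp(1)\right\}\subset\Sp(2)$, which commutes with $\H=\left\{\left(\begin{smallmatrix}q&0\\0&1\end{smallmatrix}\right):q\in\Sp(1)\right\}$ and acts on $\frm_3=\mathrm{Im}\,\mathbb H$ by $x\mapsto p\,x\,\bar p$; for $N^{1,1}$ take $\K=\left\{\left(\begin{smallmatrix}A&0\\0&1\end{smallmatrix}\right):A\in\SU(2)\right\}\subset\SU(3)$, which commutes with $\H=\U(1)_{1,1}$ because the upper--left $2\times2$ block of $\diag(z,z,z^{-2})$ is the scalar $z\,\mathrm{Id}_2$, and acts on $\frm_3=\su(2)$ by $X\mapsto AXA^{-1}$. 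In both cases $\Lie(\K)=\frm_3$, the subspace $\frm$ is $\Ad(\K)$-invariant, the basis $(e_1,e_2,e_3)$ is orthonormal for the $\Ad(\K)$-invariant inner product on $\frm_3$, and $\Ad(\K)|_{\frm_3}\colon\K\to\SO(3)$ is onto; in particular $\Ad(\K)$ acts transitively on the rays of $\frm_3^*$.

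The transport argument is then routine. For $k\in\K$ the conjugation $c_k\colon g\mapsto kgk^{-1}$ is an automorphism of $\G$ preserving $\H$, so it descends to a diffeomorphism $\sigma_k$ of $\G/\H$ fixing the identity coset, with $\sigma_k(g\cdot x)=c_k(g)\cdot\sigma_k(x)$ and with differential $\Ad(k)|_{\frm}$ at the identity coset. Since $\f$ is invariant under all of $\G$, the pullback $\sigma_k^*\f$ is again a $\G$-invariant $\G_2$-structure; because the strong $\G_2T$ equation $d\star\f=\theta\W\star\f$ is natural with respect to diffeomorphisms, $\sigma_k^*\f$ is an $\Ad(\H)$-invariant $\G_2T$-structure whose Lee form at the identity coset is $\Ad(k)^*\theta$. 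Choosing $k\in\K$ so that the rotation $\Ad(k)^*$ carries $\theta\in\frm_3^*$ onto a non-zero multiple of $e^1$ --- possible since $\Ad(k)$ preserves $\frm_3^*$ and $\SO(3)$ is transitive on rays --- yields the required $\Ad(\H)$-invariant $\G_2T$-structure with Lee form proportional to $e^1$.

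The only step carrying real content --- and hence the main obstacle --- is the explicit identification of the centraliser $\K$ in each case and the verification that $\Ad(\K)$ acts on $\frm_3$ as the full $\SO(3)$ in the orthonormal frame $(e_1,e_2,e_3)$; everything else is naturality of the $\G_2T$ condition under diffeomorphisms together with transitivity of $\SO(3)$ on the unit sphere. One could equivalently phrase the argument through the natural action of $N_\G(\H)/\H$ on the set of invariant $\G_2$-structures, but exhibiting the centraliser directly is cleaner.
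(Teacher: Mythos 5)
Your proposal is correct and follows essentially the same route as the paper: you pick the very same centralising subgroups $\K\subset\G$ (the lower-right $\Sp(1)$ inside $\Sp(2)$, the upper-left $\SU(2)$ inside $\SU(3)$), observe that conjugation by $\K$ preserves $\H$ and acts on $\frm_3$ through the full $\SO(3)$, and transport the invariant $\G_2T$-structure by the induced equivariant diffeomorphism so that its Lee form, which lies in $\frm_3^*$, is rotated onto a multiple of $e^1$. You merely spell out the naturality and transitivity details that the paper compresses into ``from this, the assertion follows.''
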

\begin{proof}
Define $\K\cong\SU(2)$ as follows:
\[
\K= \left\lbrace\begin{pmatrix} 
1&0 \\ 0& h \end{pmatrix} \st h \in \Sp(1) \right\rbrace \mbox{ if } \G=\Sp(2), \quad 
\K=\left\lbrace \begin{pmatrix} 
h &0 \\ 0& 1 \end{pmatrix} \st h \in \SU(2) \right\rbrace \mbox{ if } \G=\SU(3).
\]
In both cases, the conjugation by an element of $\K$ is a Lie group isomorphism that fixes $\H$. 
Moreover, $\Ad(\K)$ acts transitively on $\frm_3$ and trivially on $\frm_4$. 
From this, the assertion follows. 
\end{proof}

We now describe the expression of the generic $\Ad(\H)$-invariant $4$-form $\rho\in(\L^4\frm^*)^\H$ such that 
$d\rho = \lambda e^1\W\rho$. 

\begin{proposition} \label{prop:integrable-Sp(2)-SU(3)}
Let $\rho\in(\L^4\frm^*)^\H$ be an  $\Ad(\H)$-invariant $4$-form such that $d\rho= \lambda e^1 \wedge \rho$, where $\lambda \neq 0$.  
Then,
\begin{equation}\label{eq:rho-int}
\rho= \l_{4567}\left(e^{4567} - e^{23} \wedge \o_1\right) + e^{13} \wedge \b_2 + e^{12} \wedge \b_3,
\end{equation}
with $ \l_{4567}\in\R$ and $\b_2,\b_3 \in \la \o_0, \o_2, \o_3 \ra$. 
Moreover, the bilinear map 
$B_\rho : \frm^* \times \frm^* \to (\L^7 \frm^*)^{\otimes 2}$ 
satisfies $B_\rho\left(e^1,e^1\right)= \l_{4567}^3\, \Omega\otimes\Omega$, $B_\rho(e^4,e^4)=B_\rho(e^5,e^5)$,  
and $B_\rho\left(e^1,e^k\right)=0$ for $k=2,3$, where $\Omega=e^{1234567}$. 
Consequently, $\l_{4567}\neq0$ whenever $\rho$ is a definite $4$-form. 
\end{proposition}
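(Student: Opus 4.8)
The plan is to prove both parts by an explicit computation in the $\Ad(\H)$-invariant exterior algebra of $\frm=\frm_3\oplus\frm_4$, using the structure equations recorded above. First I would parametrise the general invariant $4$-form: since $(\Lambda^4\frm^*)^\H\cong\la e^{4567}\ra\oplus\Lambda^2\frm_3^*\otimes(\Lambda^2\frm_4^*)^\H$, and $\o_0,\o_1,\o_2,\o_3$ (with $\o_0=0$ in the $\Sp(2)$ case) span $(\Lambda^2\frm_4^*)^\H$, every such form is
\[
\rho=\l_{4567}\,e^{4567}+e^{23}\wedge\alpha_1+e^{13}\wedge\alpha_2+e^{12}\wedge\alpha_3,\qquad\alpha_i\in\la\o_0,\o_1,\o_2,\o_3\ra .
\]
Then I would compute $d\rho$ from \eqref{ec:diffs-2-1}--\eqref{ec:diffs-2-3}, from the formulas for the $de^i$, from $d(e^{4567})=0$, $d\o_0=0$, $d\o_k\wedge\o_k=0$, and from the products $\o_i\wedge\o_j$ --- the table recalled just before the proposition in the $\Sp(2)$ case, to which one adds $\o_0\wedge\o_j=0$ for $j=1,2,3$ and $\o_0\wedge\o_0=-2\,e^{4567}$ in the $\SU(3)$ case, both immediate from the expressions of the $\o_i$. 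Inspecting these relations shows that both $d\rho$ and $\lambda\,e^1\wedge\rho$ are linear combinations of the invariant $5$-forms $e^{123}\wedge\o_k$ and $e^i\wedge e^{4567}$, $i=1,2,3$ (recall $e^1,e^2,e^3$ are $\Ad(\H)$-fixed, so $e^1\wedge\rho$ is invariant).

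Next I would compare coefficients. Writing $\lambda\,e^1\wedge\rho=\lambda\l_{4567}\,e^1\wedge e^{4567}+\lambda\,e^{123}\wedge\alpha_1$, we see the right-hand side has no $e^2\wedge e^{4567}$ or $e^3\wedge e^{4567}$ component, so those components of $d\rho$ must vanish; this kills the $\o_1$-components of $\alpha_2$ and $\alpha_3$ and ties together their $\o_2,\o_3$-components. Comparing the $e^{123}\wedge\o_k$ components and using $\lambda\neq0$ together with the relations just found then forces $\alpha_1=-\l_{4567}\,\o_1$ (in particular annihilating its $\o_0$-component when present), and the $e^1\wedge e^{4567}$ component produces one last compatibility relation among the surviving parameters. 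This is exactly the form \eqref{eq:rho-int}, with $\b_2:=\alpha_2$ and $\b_3:=\alpha_3$ in $\la\o_0,\o_2,\o_3\ra$.

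For the assertion on $B_\rho$, I would pass to the dual picture: fix $\Omega=e^{1234567}$ and let $\hat\rho\in\Lambda^3\frm$ satisfy $\iota_{\hat\rho}\Omega=\rho$. Reading $\hat\rho$ off \eqref{eq:rho-int} term by term, the contributions of $e^{13}\wedge\b_2$ and $e^{12}\wedge\b_3$ are $3$-vectors lying in $e_2\wedge\Lambda^2\frm_4$ and $e_3\wedge\Lambda^2\frm_4$ respectively, hence containing no $e_1$, while $\l_{4567}(e^{4567}-e^{23}\wedge\o_1)$ contributes $\l_{4567}(e_1\wedge e_2\wedge e_3-e_1\wedge e_4\wedge e_7-e_1\wedge e_5\wedge e_6)$; therefore $\iota_{e^1}\hat\rho=\l_{4567}(e_2\wedge e_3-e_4\wedge e_7-e_5\wedge e_6)$. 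Substituting into $B_\rho(\xi,\eta)=\bigl(\tfrac16\,\iota_\xi\hat\rho\wedge\iota_\eta\hat\rho\wedge\hat\rho\bigr)\otimes\Omega^{\otimes3}$ and sorting indices gives $B_\rho(e^1,e^1)=\l_{4567}^3\,\Omega\otimes\Omega$; the terms of $B_\rho(e^1,e^k)$ ($k=2,3$) that do not vanish by index repetition turn out to be multiples of $\b_k\wedge\o_1$, which is zero since $\b_k\in\la\o_0,\o_2,\o_3\ra$; and $B_\rho(e^4,e^4)=B_\rho(e^5,e^5)$ drops out once both are evaluated. The final claim is then immediate: if $\rho$ is definite, then by definition $\overline{\beta}_\rho=|\det(B_\rho)|^{-1/6}B_\rho$ is a definite bilinear form, hence so is $B_\rho$, so $B_\rho(e^1,e^1)\neq0$ and therefore $\l_{4567}\neq0$.

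The hardest part will be organisational rather than conceptual: arranging the comparison $d\rho=\lambda\,e^1\wedge\rho$ so the linear relations among the parameters can be extracted cleanly, and controlling the many sign conventions --- in the Hodge-type identification between $\Lambda^2\frm_4$ and $\Lambda^2\frm_4^*$ used to compute $\hat\rho$, and in the triple wedge product defining $B_\rho$ --- all while running the $\Sp(2)$ and $\SU(3)$ computations in parallel, their only genuine difference being the extra generator $\o_0$.
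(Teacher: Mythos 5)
Your proposal is correct and follows essentially the same route as the paper: parametrize the general invariant $4$-form, compare the coefficients of the invariant $5$-forms $e^{123}\wedge\o_k$ and $e^i\wedge e^{4567}$ in $d\rho=\lambda e^1\wedge\rho$ to get \eqref{eq:rho-int}, then compute $B_\rho$ through the dual $3$-vector $\hat\rho$, with $B_\rho(e^1,e^1)=\l_{4567}^3\,\Omega\otimes\Omega$ and the vanishing of $B_\rho(e^1,e^k)$ reducing to $\b_k\wedge\o_1=0$, exactly as in the paper. The only minor differences are that the paper gets $B_\rho(e^4,e^4)=B_\rho(e^5,e^5)$ for $\Sp(2)$ from irreducibility of $\frm_4$ rather than by evaluation, and that in your bookkeeping the vanishing of the $e^{2}\wedge e^{4567}$ and $e^{3}\wedge e^{4567}$ components by itself only ties the $\o_1$-parts of $\b_2,\b_3$ to the $\o_2,\o_3$-parts of the $e^{23}$-block (all of these are then killed by the $e^{123}\wedge\o_2$, $e^{123}\wedge\o_3$ comparisons together with $\lambda\neq0$), so the form you state at the end is indeed what the full comparison yields.
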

\begin{proof}
We can write $\rho= \l_{4567} e^{4567}+ e^{23} \wedge \b_1 + e^{13} \wedge \b_2 + e^{12}\wedge \b_3$, with
$\b_j= \sum_{k=0}^3 \l_j^k \o_k$, where $\l_j^k$, $\l_{4567} \in \R$. We then have
\[
e^1 \wedge \rho= \l_{4567}e^{14567} + e^{123} \wedge \left( \l_1^0\, \o_0 +  \l_1^1\,\o_1 +  \l_1^2\,\o_2 + \l_1^3\, \o_3 \right).
\]
Using \eqref{ec:diffs-2-1}-\eqref{ec:diffs-2-3}, we get 
\[
\begin{split}
d\rho &= 2(\l^2_3-\l^3_2) e^{123}\W\o_1 +2(\l^3_1-\l^1_3) e^{123}\W\o_2 + 2(\l^1_2-\l^2_1) e^{123}\W\o_3 \\
	 &\quad +2(\l^3_2-\l^2_3) e^{14567} + 2(\l^3_1-\l^1_3)e^{24567} +2(\l^2_1-\l^1_2)e^{34567}. 
\end{split}	
\]
The condition $d\rho= \lambda e^1 \wedge \rho$ is then equivalent to the following system of equations 
\[
\begin{split}
&\l\l^0_1=0,\quad \l\l^1_1 = 2(\l^2_3-\l^3_2),\quad \l\l^2_1 = 2(\l^3_1-\l^1_3),\quad \l\l^3_1 = 2(\l^1_2-\l^2_1), \\
&\l\l_{4567} = 2(\l^3_2-\l^2_3),\quad \l^3_1-\l^1_3 = 0,\quad \l^2_1-\l^1_2=0. 
\end{split}
\]
Since $\l\neq0$, this gives $\l^0_1=\l^2_1=\l^3_1=\l^1_3=\l^1_2=0$ and $\l^1_1 = \tfrac{2}{\l}(\l^2_3-\l^3_2) =  -\l_{4567}$. 
From this, the first part of the statement follows.

\smallskip

We now compute the bilinear map $B_{\rho}$ as explained in Section \ref{SecDefForm7}. 
Define $\widehat{\o}_1 \coloneqq e_{47}+ e_{56}$, $\widehat{\o}_2 \coloneqq e_{46}-e_{57}$, $\widehat{\o}_3\coloneqq e_{45}+ e_{67}\in\Lambda^2\frm$.  
Moreover, let $\widehat{\o}_0 \coloneqq e_{47}- e_{56} $ if $\G=\SU(3)$ and $\widehat{\o}_0 \coloneqq 0$ if $\G=\Sp(2)$. 
Define $\widehat{\rho}\in\Lambda^3\frm$ via the identity 
$\iota_{\widehat{\rho}}\Omega = \rho$, where $\Omega=e^{1234567}$. 
Then 
\[
\widehat{\rho}= \l_{4567} \left(e_{123} - e_1 \wedge \g_1\right) + e_2 \wedge \g_2 + e_3 \wedge \g_3,
\] 
where $\g_1=  \widehat{\o}_1$, and $\g_k\in\la \widehat{\o}_0, \widehat{\o}_2, \widehat{\o}_3\ra$, for $k=2,3$. 
We then have
\[
\iota_{e^1} \widehat{\rho} \wedge \iota_{e^1} \widehat{\rho} \wedge \widehat{\rho}  = 
\l_{4567}^2 \left(e_{23} - \g_1\right)^2\W\widehat{\rho} = 6 \l_{4567}^3\,e_{1234567},
\]
since $\g_2 \wedge \g_1=0 = \g_3\W\g_1$. Therefore $B_\rho\left(e^1,e^1\right)= \l_{4567}^3\, \Omega\otimes\Omega$. 

To prove $B_{\rho}(e^1,e^2)=0$, it is sufficient observing that 
\[
 \iota_{e^1} \widehat{\rho} \wedge \iota_{e^2} \widehat{\rho} \wedge \widehat{\rho} 
 =  \l_{4567}\left(e_{23}\wedge \g_2 -e_{13} \wedge \g_1\right)\wedge \widehat{\rho} =0. 
\]
A similar computation yields $B_{\rho}(e^1,e^3)=0$. 
Finally, the identity $B_\rho(e^4,e^4)=B_\rho(e^5,e^5)$ is always true when $\G=\Sp(2)$, since the $\H$-module $\frm_4$ is irreducible, 
while it follows from a direct computation when $\G=\SU(3)$. 
\end{proof}

\begin{remark}\label{remLeeAW11S7}
It follows from the proof of the previous result that we must have $\l^2_3\neq\l^3_2$
when the $4$-form $\rho$ is definite and $\lambda$ is not zero. 
\end{remark}

Notice that the $4$-form $\rho$ given in \eqref{eq:rho-int} depends on $7$ real parameters when $\G=\SU(3)$, while it depends on $5$ real parameters 
when $\G=\Sp(2)$, since in this last case $\o_0=0$.  

Let us now assume that $\rho$ is definite. 
This is equivalent to saying that the parameters belong to a certain open subset of $\R^{7}$ (respectively $\R^5$) 
when $\G=\SU(3)$ (respectively $\G=\Sp(2)$). This subset is non-empty, see Remark \ref{RemEG2T} below. 
Let $\overline{g}_\rho:\frm^*\times\frm^*\to\R$ be the $\Ad(\H)$-invariant metric induced by $\rho$ as explained in Section \ref{SecDefForm7}. 
Choosing an orientation on $\frm$, we have $\rho = \star\f$, 
where $\f$ is an $\Ad(\H)$-invariant $\G_2T$-structure on $\frm$ with Lee form $\theta=\l e^1$ inducing the metric $g_\f = {\overline{g}_\rho}^{-1}$.  

Choosing the orientation $\Omega= e^{1234567}$, it follows from the second part of Proposition \ref{prop:integrable-Sp(2)-SU(3)} that  
$g_\f$ is of the form
\begin{equation}\label{AwS7met}
\begin{split}
g_\f	&= \sum_{i=1}^3 a_i^2 e^i\odot e^i + 2t \, e^2\odot e^3 + 
	b^2 \left(e^4\odot e^4+ e^5\odot e^5 + e^6\odot e^6 + e^7\odot e^7\right) \\
	&\quad+2r\left(e^4\odot e^5 - e^6\odot e^7\right) +2s\left(e^4\odot e^6 + e^5\odot e^7\right),
\end{split}
\end{equation}
where the coefficients depend on those of $\rho$ and $r=s=0$ if $\G=\Sp(2)$. 
An inspection of their expressions points out the identity
\[
\left(\det\left.g_\f\right|_{\la e_2,e_3\ra}\right)^2 = \det\left.g_\f\right|_{\frm_4}. 
\]
Consequently, we have 
\[
\star\left(e^{4567}\right) =\frac{\sqrt{\det g_\f}}{ \det\left.g_\f\right|_{\frm_4}}\,e^{123},\quad 
\star\left(e^{23}\W\o_1\right) = \frac{\sqrt{\det g_\f}}{ \det\left.g_\f\right|_{\frm_4}}\,e^{1}\W\o_1, 
\]
and $\star(e^{13}\W\b_2),\star(e^{12}\W\b_3)\in\la e^2\W\o_0,e^2\W\o_2,e^2\W\o_3, e^3\W\o_0,e^3\W\o_2,e^3\W\o_3\ra$.
Collecting all these observations, we obtain the following. 
\begin{lemma}\label{lem:formula-G2}
Let $\varphi$ be an $\Ad(\H)$-invariant $\G_2T$-structure on $\frm$ with Lee form $\theta = \l e^1$, for some $\l\neq0$.  
Then 
\begin{equation} \label{eqn:G2}
\varphi= \m \left(e^{123} - e^1 \wedge \o_1\right) + e^2 \wedge \b_2' + e^3 \wedge \b_3',
\end{equation}
where $\m$ is a non-zero constant and $\b_2', \b_3' \in \la \o_0, \o_2, \o_3 \ra$. 
\end{lemma}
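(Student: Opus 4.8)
The plan is to invoke Proposition~\ref{prop:integrable-Sp(2)-SU(3)} directly and then translate its conclusion through the Hodge star. Recall that, by the discussion preceding the lemma, an $\Ad(\H)$-invariant $\G_2T$-structure $\f$ on $\frm$ with Lee form $\theta=\l e^1$ corresponds to an $\Ad(\H)$-invariant definite $4$-form $\rho=\star\f$ satisfying $d\rho=4\,\tau_1\W\rho$; since $\theta=4\tau_1$ for a $\G_2T$-structure, this reads $d\rho=\l e^1\W\rho$ after rescaling, so Proposition~\ref{prop:integrable-Sp(2)-SU(3)} applies and yields
\[
\rho= \l_{4567}\left(e^{4567} - e^{23} \wedge \o_1\right) + e^{13} \wedge \b_2 + e^{12} \wedge \b_3,
\]
with $\l_{4567}\neq0$ (since $\rho$ is definite) and $\b_2,\b_3\in\la\o_0,\o_2,\o_3\ra$.

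First I would record, from the explicit form \eqref{AwS7met} of the metric $g_\f={\overline g_\rho}^{-1}$ induced by $\rho$, the two Hodge-star identities already derived in the excerpt:
\[
\star\left(e^{4567}\right) =c\,e^{123},\qquad \star\left(e^{23}\W\o_1\right) = c\,e^{1}\W\o_1,
\]
where $c\coloneqq\sqrt{\det g_\f}/\det\!\left.g_\f\right|_{\frm_4}>0$, and the fact that $\star(e^{13}\W\b_2)$ and $\star(e^{12}\W\b_3)$ both lie in $\la e^2\W\o_0,e^2\W\o_2,e^2\W\o_3,e^3\W\o_0,e^3\W\o_2,e^3\W\o_3\ra$. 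The first two follow from the structure of $g_\f$ on the orthogonal summand $\frm_3=\la e_1,e_2,e_3\ra$ together with $\omega_1^2=2e^{4567}$ and $\o_1\perp\o_2,\o_3$; the last follows because the Hodge star exchanges $\Lambda^2\frm_3^*\otimes\Lambda^2\frm_4^*$ with $\Lambda^1\frm_3^*\otimes\Lambda^2\frm_4^*$ and $g_\f$ preserves the subspace $\la\o_0,\o_2,\o_3\ra$ inside $\Lambda^2\frm_4^*$ while $\o_0,\o_2,\o_3$ are $g_\f$-orthogonal to $\o_1$.

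Then I would simply apply $\star$ to $\rho$ termwise. Since $\f=\star\rho$ (up to the sign convention $\star\star=\mathrm{id}$ on a $7$-manifold, which holds here), I get
\[
\f=\star\rho= \l_{4567}\,c\left(e^{123} - e^{1}\wedge\o_1\right) + \star(e^{13}\W\b_2) + \star(e^{12}\W\b_3).
\]
Setting $\m\coloneqq \l_{4567}\,c$, which is nonzero because $\l_{4567}\neq0$ and $c>0$, and collecting the last two summands — which by the above lie in $e^2\W\la\o_0,\o_2,\o_3\ra\oplus e^3\W\la\o_0,\o_2,\o_3\ra$ — as $e^2\W\b_2'+e^3\W\b_3'$ with $\b_2',\b_3'\in\la\o_0,\o_2,\o_3\ra$, gives exactly \eqref{eqn:G2}.

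The only genuine content beyond bookkeeping is verifying that $\star$ maps $e^{13}\W\b_2$ into $e^2\W\la\o_0,\o_2,\o_3\ra\oplus e^3\W\la\o_0,\o_2,\o_3\ra$ and likewise for $e^{12}\W\b_3$ — i.e.\ that no $e^1\W\o_j$, $e^2\W\o_1$, $e^3\W\o_1$, or $e^{4567}$ component appears. This I expect to be the main (though still routine) obstacle: it requires knowing that the $\Ad(\H)$-invariant metric $g_\f$ respects the decomposition $\Lambda^2\frm_4^*=\la\o_1\ra\oplus\la\o_0,\o_2,\o_3\ra$ and is block-diagonal with respect to $\frm_3\oplus\frm_4$, both of which are immediate from \eqref{AwS7met}, together with the observation that $e^{13}$ and $e^{12}$ are $g_\f$-orthogonal combinations of $e^2\W e^3$-type bivectors only after accounting for the off-diagonal term $2t\,e^2\odot e^3$ in $g_\f|_{\frm_3}$ — but since $\la e_2,e_3\ra$ is $g_\f$-orthogonal to $\la e_1\ra$, the Hodge dual of anything in $\Lambda^2\la e_1,e_3\ra^*\W\Lambda^2\frm_4^*$ or $\Lambda^2\la e_1,e_2\ra^*\W\Lambda^2\frm_4^*$ still has its $\frm_3^*$-factor in $\la e^2,e^3\ra$, as needed. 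Everything else is a direct consequence of Proposition~\ref{prop:integrable-Sp(2)-SU(3)} and Lemma~\ref{lem:Lee-e1}.
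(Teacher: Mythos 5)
Your proposal is correct and is essentially the paper's own argument: the lemma is obtained precisely by applying Proposition \ref{prop:integrable-Sp(2)-SU(3)} to $\rho=\star\f$ and Hodge-dualizing term by term, using the block structure \eqref{AwS7met} of $g_\f$, so that $\f=\star\rho=\m\left(e^{123}-e^1\W\o_1\right)+e^2\W\b_2'+e^3\W\b_3'$ with $\m=\l_{4567}\,c\neq0$. One caution: the appearance of the \emph{same} constant $c$ in $\star\left(e^{4567}\right)=c\,e^{123}$ and $\star\left(e^{23}\W\o_1\right)=c\,e^{1}\W\o_1$ does not follow from block-diagonality together with $\o_1^2=2e^{4567}$ and $\o_1\W\o_k=0$ alone, but rests on the identity $\bigl(\det g_\f|_{\la e_2,e_3\ra}\bigr)^2=\det g_\f|_{\frm_4}$ recorded in the text (a consequence of the special form of $B_\rho$ for the $4$-forms \eqref{eq:rho-int}), which you import by citing the derived identities rather than proving independently — with that ingredient acknowledged, your argument coincides with the paper's.
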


\begin{remark}\label{RemEG2T}\
\begin{enumerate}[i)]
\item The converse of the previous result does not hold. 
Indeed, if the $3$-form \eqref{eqn:G2} is definite, then its Lee form is given by $\theta=\l e^1$, with $\l\in\R$.
Thus, it defines either a $\G_2T$-structure with non-zero Lee form proportional to $e^1$ or a coclosed $\G_2$-structure. 
On the other hand, there exist $\Ad(\H)$-invariant coclosed $\G_2$-structures on $\frm$ that are not of the form \eqref{eqn:G2}.   
\item The space of $\Ad(\H)$-invariant $\G_2T$-structures on $\frm$ with non-zero Lee form $\theta=\l e^1$ is non-empty 
both when $\G=\SU(3)$ and when $\G=\Sp(2)$. For instance, the definite $3$-form 
\[
\f = \frac{\l}{4}\left(e^{123} - e^1 \wedge \o_1\right) + e^2 \wedge (-\o_3) + e^3 \wedge (-\o_2),
\]
defines such a type of $\G_2$-structure in both cases. The corresponding inner product is $g_\f=\frac{\l^2}{16}\,e^1\odot e^1 + \sum_{i=2}^7e^i\odot e^i$. 
\end{enumerate}
\end{remark}

Let $T$ be the torsion $3$-form of the generic $\Ad(\H)$-invariant $\G_2T$-structure $\f$ given by \eqref{eqn:G2}. 
We now focus on the condition $dT=0$ or, equivalently, $T=d\gamma$ with $\gamma \in (\L^2 \frm^*)^\H$. 
Observe that the space of $\Ad(\H)$-invariant exact $3$-forms is $d(\Lambda^2 \frm^*)^\H=\la d\o_1, d\o_2, d\o_3\ra$ 
 both when $\G=\SU(3)$ and when $\G=\Sp(2)$.

\begin{lemma}\label{lem:TorsionPerp}
Let $\f$ be an $\Ad(\H)$-invariant  $\G_2T$-structure on $\frm$ with non-zero Lee form $\theta = \lambda e^1$, and let $T$ be its torsion $3$-form. 
Then, $d\o_k\W \star T =0$, for $k=2,3$.
\end{lemma}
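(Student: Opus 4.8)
The plan is to expand $\star T$ into three pieces and to show that each of them annihilates $d\o_k$ ($k=2,3$) under the wedge product. By \eqref{eqn:tor} the torsion $3$-form is $T = \tfrac16\star(d\f\W\f)\,\f - \star d\f + \star(\theta\W\f)$; applying the (involutive, since $\dim M=7$) Hodge operator $\star$ gives $\star T = \tfrac16\star(d\f\W\f)\,\star\f + \theta\W\f - d\f$. Since $\tfrac16\star(d\f\W\f)$ is a scalar function, it therefore suffices to verify the three identities $d\o_k\W\star\f = 0$, $d\o_k\W(\theta\W\f) = 0$ and $d\o_k\W d\f = 0$ for $k=2,3$.

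For the first two I would use the explicit normal forms available for $\f$ and $\star\f$. By Lemma \ref{lem:formula-G2}, $\f = \m\big(e^{123} - e^1\W\o_1\big) + e^2\W\b_2' + e^3\W\b_3'$ with $\b_2',\b_3'\in\la\o_0,\o_2,\o_3\ra$; and since the $\G_2T$-condition with Lee form $\theta=\l e^1$ reads $d\star\f = \l\,e^1\W\star\f$ with $\l\neq0$, Proposition \ref{prop:integrable-Sp(2)-SU(3)} applies to $\rho=\star\f$ and yields $\star\f = \l_{4567}\big(e^{4567} - e^{23}\W\o_1\big) + e^{13}\W\b_2 + e^{12}\W\b_3$ with $\b_2,\b_3\in\la\o_0,\o_2,\o_3\ra$. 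Rewriting \eqref{ec:diffs-2-2} and \eqref{ec:diffs-2-3} as $d\o_2 = 2\big(e^3\W\o_1 - e^1\W\o_3\big)$ and $d\o_3 = 2\big(e^1\W\o_2 + e^2\W\o_1\big)$, and using $\theta\W\f=\l\,e^1\W\f$, the products $d\o_k\W\star\f$ and $d\o_k\W(\theta\W\f)$ expand into finitely many monomials, each vanishing for one of two reasons: a repeated covector among $e^1,e^2,e^3$, or a factor $\o_1\W\o_j$ with $j\in\{0,2,3\}$ — all of which are zero, so that among products of the $\o$'s only $\o_j\W\o_j$ could survive, and it is then killed by the accompanying $e^{4567}$ since $\o_j\W e^{4567}=0$. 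This is a bounded, routine computation, and it covers $\G=\SU(3)$ and $\G=\Sp(2)$ simultaneously: $\o_0$ (nonzero only when $\G=\SU(3)$) occurs solely inside the $\b$'s and is always annihilated either by $\o_1$ or by a repeated $e^i$.

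The third identity $d\o_k\W d\f = 0$ I would prove conceptually rather than by computation: $d\o_k$ is closed, so $d\o_k\W d\f = -\,d(d\o_k\W\f)$ is an exact $\Ad(\H)$-invariant $7$-form on the compact, oriented manifold $\G/\H$; but $(\L^7\frm^*)^\H$ is one-dimensional, spanned by the invariant volume form $dV_\f$, which has positive integral over $\G/\H$, and an exact $7$-form integrates to zero by Stokes' theorem, so $d\o_k\W d\f$ must vanish. Combining the three pieces gives $d\o_k\W\star T = 0$ for $k=2,3$. I do not anticipate a genuine obstacle: the only structural inputs are the normal forms of $\f$ and $\star\f$, which single out $e^1$ among $e^1,e^2,e^3$ and $\o_1$ among $\o_0,\dots,\o_3$, together with the relations $\o_i\W\o_j=0$ for $i\neq j$ and $\o_j\W e^{4567}=0$; the only mild nuisance is keeping track of signs when reordering the $e^i$.
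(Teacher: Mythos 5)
Your proposal is correct, and its skeleton coincides with the paper's: you use the same expansion $\star T=\tfrac16\star(d\f\W\f)\,\star\f+\theta\W\f-d\f$ (the paper writes the scalar as $\tfrac76\tau_0$), and you kill the $\star\f$ and $\theta\W\f$ contributions exactly as the paper does, by feeding the normal forms of Lemma \ref{lem:formula-G2} and Proposition \ref{prop:integrable-Sp(2)-SU(3)} into the relations $\o_1\W\o_j=0$ ($j=0,2,3$), $\o_j\W e^{4567}=0$ and the repeated-covector cancellations; I checked that the monomial bookkeeping you describe does close up in both the $\SU(3)$ and $\Sp(2)$ cases. The one genuine divergence is the $d\f$ piece: the paper disposes of it by the same pointwise algebra, verifying $d\f\W e^1\W\o_k=0=d\f\W e^k\W\o_1$ directly, whereas you observe that $d\o_k\W d\f=-d(d\o_k\W\f)$ is an exact $\Ad(\H)$-invariant $7$-form on the compact oriented space $\G/\H$, hence a constant multiple of $dV_\f$ with vanishing integral, hence zero. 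Your argument is valid (invariant $7$-forms are a line, and invariance of forms is compatible with the de Rham differential), and it buys you freedom from ever computing $d\f$; the price is that it is global, using compactness and Stokes, while the paper's check is purely linear-algebraic on $\frm$ and yields the slightly finer identities with $\a=d\f$ separately against $e^1\W\o_k$ and $e^k\W\o_1$, which is all the lemma's later use requires anyway. Either route is complete; yours is a legitimate, mildly more conceptual variant of the same proof.
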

\begin{proof}
From equation \eqref{eqn:tor}, we obtain $\star T= \frac{7}{6}\t_0\, \star\f - d\varphi + \lambda e^1 \wedge \varphi$, 
where $\f$ is given by \eqref{eqn:G2} and $\star\f$ is of the form \eqref{eq:rho-int}.  
The thesis follows taking into account formulas \eqref{ec:diffs-2-1}-\eqref{ec:diffs-2-3}, and the equalities 
$\a \wedge e^1 \wedge \o_k =0 = \a \wedge e^k \wedge \o_1$, which hold for $\a \in \{ \star\f, d\varphi, e^1 \wedge \varphi \}$ and $k=2,3$. 
\end{proof}

\begin{proposition}\label{PropNonExAW11S7}
Let $(\G,\H)=(\SU(3),\U_{1,1})$ or $(\G,\H)=(\Sp(2),\Sp(1))$. Then, the space $\G/\H$ does not admit any $\G$-invariant strong $\G_2T$-structure.
\end{proposition}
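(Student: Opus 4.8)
The plan is to argue by contradiction. Suppose $\f$ is a $\G$-invariant strong $\G_2T$-structure on $\G/\H$ and let $\theta$ be its Lee form; by invariance $\tau_0=\tfrac17\star(d\f\W\f)$ is a constant. If $\theta=0$, then $\f$ is coclosed, so Proposition~\ref{lem:co-closed} (applicable since $\tau_0$ is constant and $dT=0$) forces $b_3(\G/\H)>0$, contradicting the vanishing of the third Betti number of the spaces in Table~\ref{TabIrred}. So $\theta\neq0$, and since $\theta$ is a nonzero invariant $1$-form it lies in $\frm_3^*$; applying the conjugation by an element of $\K$ used in the proof of Lemma~\ref{lem:Lee-e1}, which is induced by a Lie group automorphism of $\G$ fixing $\H$ and therefore preserves the strong $\G_2T$ condition, we may assume $\theta=\l e^1$ with $\l\neq0$. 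By Lemma~\ref{lem:formula-G2}, $\f=\m(e^{123}-e^1\W\o_1)+e^2\W\b_2'+e^3\W\b_3'$ with $\m\neq0$ and $\b_2',\b_3'\in\la\o_0,\o_2,\o_3\ra$; write $\b_2'=x_0\o_0+x_2\o_2+x_3\o_3$, $\b_3'=y_0\o_0+y_2\o_2+y_3\o_3$, with $x_0=y_0=0$ if $\G=\Sp(2)$.

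The key point is that, as $\G/\H$ is compact with $H^3(\G/\H;\R)=0$ and $\G$ is compact, averaging shows that every closed invariant $3$-form is the differential of an invariant $2$-form, so the space of closed invariant $3$-forms is exactly $d(\L^2\frm^*)^\H=\la d\o_1,d\o_2,d\o_3\ra$. Hence, if $dT=0$ then $T\in\la d\o_1,d\o_2,d\o_3\ra$, and by formulas \eqref{ec:diffs-2-1}--\eqref{ec:diffs-2-3} every element of this span has vanishing $e^{123}$-component and vanishing $e^1\W\o_1$-component. It therefore suffices to show that these two components of $T$ cannot both vanish.

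To compute them, I would rewrite \eqref{eqn:tor} as $T=\tfrac76\tau_0\,\f-\star d\f+4\star(\tau_1\W\f)$, with $\tau_1=\tfrac\l4 e^1$. Using \eqref{ec:diffs-2-1}--\eqref{ec:diffs-2-3}, a direct computation shows that $d\f$ has $e^{4567}$-coefficient $2(\m+x_2-y_3)$, $e^{23}\W\o_1$-coefficient $\m-2x_2+2y_3$, and all remaining components among the $e^{1j}\W\o_k$ with $j\in\{2,3\}$, $k\in\{0,2,3\}$. Since $g_\f$ has the form \eqref{AwS7met}, the identities for $\star$ listed just before Lemma~\ref{lem:formula-G2} give $\star(e^{4567})=\k\,e^{123}$ and $\star(e^{23}\W\o_1)=\k\,e^1\W\o_1$ with $\k=\sqrt{\det g_\f}\,/\,\det(g_\f|_{\frm_4})>0$, whereas $\star$ sends each such $e^{1j}\W\o_k$ into the span of the $e^2\W\o_l$ and $e^3\W\o_l$; hence $\star d\f$ has $e^{123}$-coefficient $2\k(\m+x_2-y_3)$ and $e^1\W\o_1$-coefficient $\k(\m-2x_2+2y_3)$, and $\star(\tau_1\W\f)=\tfrac\l4\star(e^{12}\W\b_2'+e^{13}\W\b_3')$ contributes to neither. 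Since $\f$ contributes $\m$ and $-\m$, the $e^{123}$- and $e^1\W\o_1$-coefficients of $T$ are $\tfrac76\tau_0\m-2\k(\m+x_2-y_3)$ and $-\tfrac76\tau_0\m-\k(\m-2x_2+2y_3)$. Requiring both to vanish and eliminating $\tfrac76\tau_0\m$ gives $2(\m+x_2-y_3)=-(\m-2x_2+2y_3)$, i.e.\ $3\m=0$, contradicting $\m\neq0$. Thus $dT\neq0$, and $\G/\H$ admits no invariant strong $\G_2T$-structure.

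The main obstacle is the calculation in the third paragraph: it relies on the explicit differentials \eqref{ec:diffs-2-1}--\eqref{ec:diffs-2-3} together with the block structure of $\star_{g_\f}$ on the space of invariant $4$-forms (of dimension $8$ for $\G=\SU(3)$ and $6$ for $\G=\Sp(2)$), which is precisely what \eqref{eq:rho-int}, \eqref{AwS7met} and the identities preceding Lemma~\ref{lem:formula-G2} encode; the rest is formal. Alternatively, one can first use Lemma~\ref{lem:TorsionPerp} and the $g_\f$-orthogonality of $d\o_1$ to $d\o_2$ and $d\o_3$ to reduce $dT=0$ to $T=c\,d\o_1$, and then read off the same contradiction from the $e^{123}$- and $e^1\W\o_1$-components of $c\,d\o_1$.
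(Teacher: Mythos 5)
Your proposal is correct and follows the paper's strategy in all essentials: rule out $\theta=0$ via Proposition~\ref{lem:co-closed} and $b_3=0$, normalize $\theta=\l e^1$ with Lemma~\ref{lem:Lee-e1}, invoke Lemma~\ref{lem:formula-G2}, use that a closed invariant $3$-form lies in $d(\L^2\frm^*)^\H=\la d\o_1,d\o_2,d\o_3\ra$ (your averaging justification makes explicit what the paper leaves implicit), and extract the contradiction $3\m=0$ from two linear conditions. The one genuine deviation is that you bypass Lemma~\ref{lem:TorsionPerp}: the paper first uses it, together with the $g_\f$-orthogonality of $d\o_1$ to $\la d\o_2,d\o_3\ra$, to force $T\propto d\o_1$ and then imposes $\star T\W e^{23}=0=\star T\W\o_1$, whereas you observe that every element of $\la d\o_1,d\o_2,d\o_3\ra$ already has vanishing $e^{123}$- and $e^1\W\o_1$-components and compute those two components of $T$ directly; your coefficient bookkeeping ($2(\m+x_2-y_3)$ and $\m-2x_2+2y_3$, matching the paper's $\m_2^2,\m_3^3$) checks out, and the resulting elimination is the dual of the paper's. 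Note that your shortcut does lean on the fact that $\star(e^{4567})$ and $\star(e^{23}\W\o_1)$ carry the \emph{same} coefficient $\k$, which is exactly the content of the determinant identity established before Lemma~\ref{lem:formula-G2}; the paper's route with $\star T$ encodes the same information through the relation between the $e^{4567}$- and $e^{23}\W\o_1$-coefficients of $\star\f$ in \eqref{eq:rho-int}, so the two arguments are equivalent in substance, with yours slightly leaner in the reduction step.
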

\begin{proof} 
Assume by contradiction that there exists an $\Ad(\H)$-invariant strong $\G_2T$-structure $\f$ on $\frm$.  
Then, its Lee form $\theta$ is not zero by Proposition \ref{lem:co-closed}, and by Lemma \ref{lem:Lee-e1}
it is not restrictive assuming that $\theta =\l e^1$, for some $\l\neq0$. 
Then, $\f$ must be of the form \eqref{eqn:G2} by Lemma \ref{lem:formula-G2}. 
Since the torsion $3$-form $T$ is $\Ad(\H)$-invariant and closed, we must have $T\in d(\Lambda^2 \frm^*)^\H=\la d\o_1, d\o_2, d\o_3\ra$. 
Moreover, $T$ must be orthogonal to the subspace $\la d\o_2, d\o_3\ra$ by Lemma \ref{lem:TorsionPerp}. 
Using the expression of the inner product induced by $\f$ (cf.~\eqref{AwS7met}), we see that for $k=2,3$
\[
\star(e^k\W\o_1) \in \la e^{12}\W\o_1, e^{13}\W\o_1 \ra,\qquad \star(e^1\W\o_k) \in \la e^{23}\W\o_0, e^{23}\W\o_2, e^{23}\W\o_3 \ra. 
\]
Since $\o_1\W\o_2 = 0 = \o_1\W\o_3$, from the identities \eqref{ec:diffs-2-1}-\eqref{ec:diffs-2-3} we see that $d\o_1$ is orthogonal to the 
subspace $\la d\o_2, d\o_3\ra$, too. Therefore, $T$ must be proportional to $d\o_1 = -2\left(e^2\W\omega_3 + e^3\W\omega_2  \right)$, 
and thus $\star T \in\la e^{12}\W\o_0, e^{12}\W\o_2, e^{12}\W\o_3, e^{13}\W\o_0,  e^{13}\W\o_2, e^{13}\W\o_3 \ra$. 
In particular, we must have  $\star T \wedge e^{23}=0$ and  $\star T \wedge \o_1=0$. 

Now, $\star T= \frac{7}{6}\t_0\, \star\f - d\varphi + \lambda e^1 \wedge \varphi$, 
where $\star\f$ is of the form \eqref{eq:rho-int} with $\beta_k = \l^0_k\o_0+\l^2_k\o_2+\l^3_k\o_3$, 
and $\f$ is of the form \eqref{eqn:G2} with $\beta_k' = \mu^0_k\o_0+\mu^2_k\o_2+\mu^3_k\o_3$, for $k=2,3$. 
Using \eqref{ec:diffs-2-1}-\eqref{ec:diffs-2-3}, we obtain 
\[
\begin{split}
d\f\W e^{23} &= d(\f\W e^{23}) + \f\W d(e^{23}) = \left(2\mu+2\mu^2_2-2\mu^3_3\right) e^{234567},\\
d\f\W \o_1 &= d(\f\W \o_1) + \f \W d\o_1 = \left(2\mu-4\mu^2_2+4\mu^3_3\right) e^{234567}. 
\end{split}
\] 
From the expression of $\star\f$ we deduce that
\[
\star\f\W e^{23} = \l_{4567}\,e^{234567} = -\frac12 \star\f\W\o_1.
\]
Finally, we see that $e^1\W\f\W e^{23} = 0 =e^1\W\f\W\o_1$.
We then have 
\[
\begin{split}
\star T \wedge e^{23} =& \left( \frac{7}{6} \t_0 \l_{4567} - 2 \mu - 2  \m_2^2 + 2  \m_3^3\right) e^{234567}, \\
\star T \wedge \o_1=& -2 \left( \frac{7}{6} \t_0 \l_{4567} + \m - 2 \m_2^2 + 2 \m_3^3 \right) e^{234567}.
\end{split}
\]
Therefore $\frac{7}{6} \t_0 \l_{4567} - 2 \mu - 2  \m_2^2 + 2  \m_3^3 = 0 = \frac{7}{6} \t_0 \l_{4567} + \m - 2 \m_2^2 + 2 \m_3^3$. 
This implies $\m=0$, which is a contradiction. 
\end{proof}

\subsubsection*{\underline{The cases $2$, $3$ and $5$, $6$, $7$ of Table \ref{TabIrred}}}\  \vspace{0.2cm}  

\noindent 
Before showing that these spaces do not admit invariant strong $\G_2T$-structures, we briefly describe their homogeneous structure. 

Case $2$ corresponds to the orbit of a transitive action of $\SU(3)\times \U(1)$ on $N^{p,q}$. 
The group $\SU(3)$ acts by left multiplication on $N^{p,q}$, while $\U(1)$ acts by right multiplication and it is identified with 
a $1$-dimensional subgroup of the normalizer $N_{\SU(3)}{\U(1)_{p,q}}$. In sum, the action is
$
(s,u)\cdot x \U(1)_{p,q}= s x u^{-1}\U(1)_{p,q}.
$
Its isotropy is isomorphic to $\U(1)^2$ and there is a covering ${\pi}:N^{p,q} \to \SU(3) \times \U(1) /\U(1)^2$.

Case $3$ is the orbit of a transitive action of $\SU(3)\times \SU(2)$ on $N^{1,1}$. 
The group $\SU(3)$ acts by left multiplication. 
The group $\SU(2)$ is identified with $\mathrm{S}(\U(2) \times \U(1))\subset \SU(3)$ and it acts by right multiplication. 
The action is thus 
$
(s,u)\cdot x \U(1)_{1,1}= sx u^{-1}\U(1)_{1,1}.
$
Its isotropy is $\SU(2)\times \U(1)$ and we have a diffeomorphism $N^{1,1}\cong \SU(3)\times \SU(2)/\SU(2)\times \U(1)$.

Cases $4$-$7$ are obtained from transitive actions of different groups on the $7$-sphere $S^7$. 
Cases $4$ and $5$ correspond to the action by left multiplication of the groups $\Sp(2)$ and $\SU(4)$, 
with isotropy $\Sp(1)$ and $\SU(3)$, respectively. 
In cases $6$ and $7$ we have the left multiplication by $\Sp(2)$ and the right multiplication by $\U(1)$ or $\Sp(1)$. 
Here, we view $\Sp(1)$ as the subspace of unit quaternions acting on $\R^8=\mathbb{H} \oplus \mathbb{H}$ and $\U(1)\subset \Sp(1)$.
Again, the actions are
$(s,u) \cdot x= sxu^{-1}$, with $s \in \Sp(2)$, {$x\in S^7$},  and $u\in \Sp(1)$ or $u \in \U(1)$.
As the kernel of both isotropy representations is {isomorphic to $\Z_2$}, there are $2:1$ 
coverings ${\pi:S^7=\Sp(2)/\Sp(1)} \to \Sp(2)\times \Sp(1)/\Sp(1)^2$ and ${\pi: S^7=\Sp(2)/\Sp(1)} \to \Sp(2)\times \U(1)/\Sp(1) \times \U(1)$.

\begin{proposition}
None of the irreducible homogeneous spaces listed in cases $2,3$ and $5,6,7$ of Table \ref{TabIrred} admits invariant strong $\G_2T$-structures.
\end{proposition}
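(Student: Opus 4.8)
The plan is to reduce each of cases $2$, $3$, $5$, $6$, $7$ to a homogeneous space that has already been settled, by exploiting the presence of a smaller transitive group. In all of these cases the manifold $M$ carries, besides the transitive action of the large group $\G'$ recorded in Table \ref{TabIrred}, a transitive action of a subgroup $\G \subseteq \G'$ that was already analyzed in Section \ref{sec:Irreducible}: one takes $\G = \SU(3)$ in cases $2$ and $3$, and $\G = \Sp(2)$ in cases $5$, $6$, $7$. In each instance the $\G$-action is precisely the restriction of the $\G'$-action along the relevant inclusion $\G \hookrightarrow \G'$.

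First I would make this restriction explicit, case by case. In case $2$, setting $u = 1$ in $(s,u)\cdot x\U(1)_{p,q} = sxu^{-1}\U(1)_{p,q}$ recovers the left-multiplication action of $\SU(3)$ on $N^{p,q} = \SU(3)/\U(1)_{p,q}$; likewise, in case $3$, setting $u = 1$ in $(s,u)\cdot x\U(1)_{1,1} = sxu^{-1}\U(1)_{1,1}$ recovers the left-multiplication action of $\SU(3)$ on $N^{1,1}$. In cases $6$ and $7$, setting $u = 1$ in $(s,u)\cdot x = sxu^{-1}$ recovers the left-multiplication action of $\Sp(2)$ on $S^7$, while in case $5$ the restriction of the standard $\SU(4)$-action on $S^7 \subset \mathbb{C}^4$ to the subgroup $\Sp(2) \subset \SU(4)$ is the standard action of $\Sp(2)$ on $S^7 \subset \mathbb{H}^2$. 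In every case one thus recovers exactly the transitive action, with isotropy $\U(1)$, $\U(1)$, $\Sp(1)$, $\Sp(1)$, $\Sp(1)$ respectively, treated earlier in this section.

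The decisive observation is then completely elementary: since $\G \subseteq \G'$ act on the same manifold $M$, a $\G'$-invariant tensor on $M$ is in particular $\G$-invariant. Consequently, a hypothetical $\G'$-invariant strong $\G_2T$-structure on $M$ would be a $\G$-invariant strong $\G_2T$-structure, either on $M = N^{p,q}$ (cases $2$ and $3$) or on $M = S^7 = \Sp(2)/\Sp(1)$ (cases $5$, $6$, $7$). But no such structure exists: for $N^{p,q}$ with $p > q > 0$ and for $N^{1,0}$ this follows from Proposition \ref{PropAWpq}, where the $\SU(3)$-invariant $\G_2T$-structures are shown to have non-closed torsion; for $N^{1,1}$ it follows from Proposition \ref{PropNonExAW11S7}; and for $S^7 = \Sp(2)/\Sp(1)$ it again follows from Proposition \ref{PropNonExAW11S7}. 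This contradiction proves the statement. I do not expect any genuine obstacle; the only point demanding care is to check, in each of the five cases, that the inclusion $\G \hookrightarrow \G'$ really induces on $M$ the transitive action studied before, so that the notion of invariant $\G_2$-structure coincides on both sides. This is precisely the bookkeeping carried out in the descriptions of the homogeneous structures recalled just before the proposition.
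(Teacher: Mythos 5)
Your proposal is correct and follows essentially the same route as the paper: reduce to the transitive subgroup $\SU(3)$ (cases $2$, $3$) or $\Sp(2)$ (cases $5$--$7$) and invoke the non-existence results already established, the only difference being that the paper phrases the reduction via the covering/local isomorphism $\pi\colon N^{p,q}\to \G/\H$ (resp.\ $\pi\colon \Sp(2)/\Sp(1)\to \G/\H$) induced by the inclusion and pulls back the hypothetical structure, which also takes care of the cases ($2$, $6$, $7$) where the homogeneous space of Table \ref{TabIrred} is only \emph{covered} by $N^{p,q}$ or $S^7$ rather than equal to it. Your explicit citation of Proposition \ref{PropAWpq} alongside Proposition \ref{PropNonExAW11S7} for the generic Aloff--Wallach spaces in case $2$ is appropriate (the paper cites only the latter).
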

\begin{proof}
In cases $2$ and $3$, the map $\pi \colon N^{p,q} \to \G/\H$ induced by the inclusion $\SU(3) \subset \G$ yields a local isomorphism. 
In cases $5$-$7$, the same holds true for the map $ \pi \colon \Sp(2)/\Sp(1) \to \G/\H$. 
Therefore, if there were an invariant strong $\G_2T$-structure $\varphi$ on $\G/\H$, 
then $\pi^* \varphi$ would be an invariant strong $\G_2T$-structure on $N^{p,q}$ or $\Sp(2)/\Sp(1)$. 
{This is not possible by Proposition \ref{PropNonExAW11S7}.  }
\end{proof}

\subsection*{Cases 8-12 of Table \ref{TabIrred}} \label{subsec:rest}

We start with the $7$-sphere $S^7=\Spin(7)/\G_2$ and the Berger space $B^7=\SO(5)/\SO(3)$, which is a rational homology sphere. 
The description of the embedding $\SO(3)\subset \SO(5)$ can be found in \cite{BerH}. 
{These spaces are isotropy irreducible, thus they 
do not admit any invariant $1$-form and Proposition \ref{lem:co-closed} immediately gives the following. }
\begin{proposition}
There are no invariant strong $\G_2T$-structures on $S^7=\Spin(7)/\G_2$ and on $B^7=\SO(5)/\SO(3)$.
\end{proposition}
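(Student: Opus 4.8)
The plan is to combine the isotropy irreducibility of the two spaces with Proposition~\ref{lem:co-closed}. First I would record that, for $S^7 = \Spin(7)/\G_2$, the isotropy representation of $\G_2$ on the $7$-dimensional tangent space is the standard irreducible representation of $\G_2$, and that the Berger space $B^7 = \SO(5)/\SO(3)$ is isotropy irreducible as well, the embedding $\SO(3) \subset \SO(5)$ being the one recalled in \cite{BerH}. Writing $M = \G/\H$ for either space and fixing a reductive decomposition $\frg = \frm \oplus \frh$, the $\Ad(\H)$-module $\frm$ is then irreducible of dimension $7 > 1$, so the space $(\frm^*)^\H$ of $\Ad(\H)$-invariant $1$-forms is trivial; equivalently, $M$ admits no nonzero $\G$-invariant $1$-form.

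Next I would argue by contradiction. Assume $M$ carries a $\G$-invariant strong $\G_2T$-structure $\f$. Its Lee form, $\theta = 4\tau_1 = -\star(T\W\f)$ by \eqref{LeeT}, is constructed equivariantly from $\f$, hence defines a $\G$-invariant $1$-form and therefore vanishes. As observed after \eqref{LeeT}, $\theta = 0$ means precisely that $\f$ is coclosed, i.e.\ $d\star\f = 0$. Moreover the torsion function $\tau_0$ is a $\G$-invariant smooth function on the connected homogeneous space $M$, hence constant. Thus $\f$ is a coclosed $\G_2$-structure with closed torsion $3$-form and with $\tau_0$ constant on a compact $7$-manifold, so Proposition~\ref{lem:co-closed} applies and yields $b_3(M) > 0$.

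Finally I would invoke the topology of the two spaces to close the argument: $S^7$ is the $7$-sphere, so $b_3(S^7) = 0$, while $B^7$ is a rational homology sphere, so $b_3(B^7) = 0$. In both cases $b_3(M) = 0$, contradicting $b_3(M) > 0$; hence neither space admits a $\G$-invariant strong $\G_2T$-structure. There is essentially no obstacle here --- the proof is immediate once these two auxiliary facts are in place --- and the only points worth stating explicitly are that isotropy irreducibility forces every invariant $1$-form, in particular the Lee form, to vanish, and that $\G$-invariance of $\tau_0$ makes it constant, so that Proposition~\ref{lem:co-closed} genuinely applies.
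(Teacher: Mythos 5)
Your proof is correct and is essentially the paper's own argument: isotropy irreducibility of $\Spin(7)/\G_2$ and of the Berger space rules out nonzero invariant $1$-forms, so the Lee form vanishes, and Proposition~\ref{lem:co-closed} (with $\tau_0$ constant by invariance) forces $b_3(M)>0$, contradicting $b_3(S^7)=b_3(B^7)=0$. You have merely spelled out the intermediate steps (invariance of $\theta$ and of $\tau_0$) that the paper leaves implicit.
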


We now describe the structure of the remaining homogeneous spaces $V^{5,2}$, $Q^{1,1,1}$ and $M^{1,1,0}$. 
The first space is $V^{5,2}=\SO(5)/\SO(3)$, where $\SO(3) \subset \SO(5)$ is the standard embedding. 
The second one is $Q^{1,1,1}=\SU(2)^3/\U(1)^2$, where $\U(1)^2$ denotes the subgroup
\[ 
\left\{ 
\diag\left(zw,(zw)^{-1},zw,(zw)^{-1},(zw)^{-1},zw\right)\st (z,w)\in\U(1)^2
\right\} 
\subset \SU(2)^3.
\]
Finally, $M^{1,1,0}= \SU(3)\times \SU(2)/\SU(2) \times \U(1)$, 
where $\SU(2)$ is viewed as a subgroup of $\SU(3)$ and $\U(1)$ is transversely embedded as 
{$\left\{\diag\left(z,z,z^{-2},z^3,z^{-3}\right) \st z \in \U(1) \right\}$. }

{To show that none of these spaces admits invariant strong $\G_2T$-structures, we use the following. }

\begin{lemma}\label{lem:2-forms-10-12}
Every invariant $2$-form on $V^{5,2}$, $Q^{1,1,1}$, and $M^{1,1,0}$ is closed. 
\end{lemma}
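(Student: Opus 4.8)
The plan is to reduce the lemma to a short structure-constant computation on each of the three spaces. For $\G/\H$ one of $V^{5,2}=\SO(5)/\SO(3)$, $Q^{1,1,1}=\SU(2)^3/\U(1)^2$ and $M^{1,1,0}=\SU(3)\times\SU(2)/(\SU(2)\times\U(1))$, I would fix a reductive decomposition $\frg=\frm\oplus\frh$, identify $\G$-invariant forms with $\Ad(\H)$-invariant elements of $\Lambda\frm^*$, and use that on such forms the exterior derivative is computed from the bracket (for a $1$-form, $d\xi(X,Y)=-\xi([X,Y]_\frm)$, and similarly in higher degrees). The first step is to decompose the isotropy module: for $V^{5,2}$ one gets $\frm=\frm_1\oplus\frm_0$ with $\frm_1\cong\R^3\otimes\R^2$ (two copies of the standard $\SO(3)$-representation) and $\frm_0=\R$ trivial; for $Q^{1,1,1}$ one gets $\frm=\R Z\oplus\Cp_1\oplus\Cp_2\oplus\Cp_3$, with $\R Z$ the trivial summand and $\Cp_i$ the root space of the $i$-th $\frsu(2)$-factor, the three $\Cp_i$ being pairwise inequivalent nontrivial $\U(1)^2$-modules; for $M^{1,1,0}$ one gets $\frm=\R Z\oplus U\oplus\Cp$, with $U$ the off-diagonal block of $\frsu(3)$ (the standard $\SU(2)$-module $\Cp^2$ twisted by a nontrivial $\U(1)$-character) and $\Cp$ the root space of the $\frsu(2)$-factor.

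From these decompositions one reads off $(\Lambda^2\frm^*)^\H$. For $V^{5,2}$ it is one-dimensional, spanned by the $\SO(3)$-invariant pairing $\sigma$ between the two copies of $\R^3$ inside $\frm_1$ (equivalently, the $\SO(3)$-metric tensored with the area form of $\R^2$). For $Q^{1,1,1}$ it is spanned by the three root-space area forms $\o_1,\o_2,\o_3$: the summands $\R Z\otimes\Cp_i$ carry no invariant, and since the $\Cp_i$ are pairwise inequivalent and pairwise non-conjugate there is no mixed invariant in $\Lambda^2$. For $M^{1,1,0}$ it is two-dimensional, spanned by the area form $\o$ of $\Cp$ and the Kähler form $\o_U$ of $U$, the holomorphic symplectic $2$-forms on $U\cong\Cp^2$ carrying the nonzero $\U(1)$-weight of $\det$ and hence not surviving.

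It remains to check $d=0$ on these generators. For a root-space area form $e^*\wedge f^*$ — the forms $\o_1,\o_2,\o_3$ on $Q^{1,1,1}$ and $\o$ on $M^{1,1,0}$ — the only brackets in $\frm$ involving $e$ or $f$ are $[e,f]_\frm$, a multiple of $Z$, and $[Z,e]_\frm$, $[Z,f]_\frm$, multiples of $f$ and $e$ since $Z$ rotates the root space (the other root spaces commuting with it). Hence $de^*$ is a multiple of $Z^*\wedge f^*$ and $df^*$ a multiple of $Z^*\wedge e^*$, so $d(e^*\wedge f^*)=de^*\wedge f^*-e^*\wedge df^*=0$ by the repeated covectors. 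For $V^{5,2}$, a direct bracket computation in $\frso(5)$ shows $\sigma=d\alpha$, where $\alpha$ is the invariant $1$-form dual to $\frm_0$; in particular $\sigma$ is closed. For $\o_U$ on $M^{1,1,0}$ I would avoid computing $d\o_U$ directly: $dZ^*$ is an invariant $2$-form, hence $dZ^*=c_1\o_U+c_2\o$, and $c_1\neq0$ because the centre of the $\mathfrak{u}(2)$-block, which occurs in $[U,U]$, has nonzero component along $Z$; applying $d$ and using $d\o=0$ gives $c_1\,d\o_U=0$, so $d\o_U=0$. This exhausts $(\Lambda^2\frm^*)^\H$ in all three cases.

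The main obstacle will be the two invariant $2$-forms that are not root-space area forms, $\o_U$ on $M^{1,1,0}$ and $\sigma$ on $V^{5,2}$, since the repeated-index trick does not apply and one has to run the relevant bracket computations: for $\o_U$, determining $(\Lambda^2 U^*)^{\SU(2)\times\U(1)}$ — separating the $\U(1)$-invariant Kähler form from the $\U(1)$-charged holomorphic symplectic $2$-forms — and checking that the extra Cartan direction of $\frsu(3)$ has nonzero component along $\R Z$; for $\sigma$, verifying $\sigma=d\alpha$ in $\frso(5)$. As a cross-check, the lemma is equivalent to the identity $\dim(\Lambda^2\frm^*)^\H=b_2(\G/\H)+\dim(\Lambda^1\frm^*)^\H-b_1(\G/\H)$, which follows because invariant forms compute the de Rham cohomology of $\G/\H$ ($\G$ being compact), and which is readily confirmed from the known Betti numbers of the three spaces.
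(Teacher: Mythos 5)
Your proposal is correct and takes essentially the same route as the paper's proof: fix a reductive decomposition, decompose the isotropy module, identify $(\Lambda^2\frm^*)^\H$ (the single mixed pairing, which is $d$ of the invariant $1$-form, on $V^{5,2}$; the three root-space area forms on $Q^{1,1,1}$; the span of the area form $\o$ and the K\"ahler form $\o_U$ on $M^{1,1,0}$), and verify closedness from the structure constants — your step $dZ^*=c_1\o_U+c_2\o$ with $c_1\neq 0$, combined with $d^2=0$ and $d\o=0$, is exactly the paper's observation that the invariant $2$-forms on $M^{1,1,0}$ are spanned by the exact form $df^7$ and the closed form $f^{56}$. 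One cosmetic slip: on $M^{1,1,0}$ the trivial direction $Z$ lies in the $\frsu(3)$-factor, so it does not rotate the root space of the $\frsu(2)$-factor but commutes with it; since this only removes terms from the differentials of the corresponding covectors, $d$ of that area form vanishes even more easily and the argument is unaffected.
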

\begin{proof}
For $V^{5,2}$, let $\G=\SO(5)$ and $\H=\SO(3)$. 
Let us consider the basis $\{E_{ij}\}_{1\leq i<j\leq 5}$ of $\frg= \mathfrak{so}(5)$, {where $E_{ij}$ denotes the skew-symmetric $5\times5$ matrix 
whose only non-zero entries are $1$ in position $(i,j)$ and $-1$ in position $(j,i)$. }
We have a reductive decomposition $\frg =  \frm \oplus \frh$, where $\frh = \la E_{12},E_{13},E_{23} \ra$. 
The action of $\H$ yields a decomposition $\frm= \frm_0 \oplus \frm_1 \oplus \frm_2$, where the invariant irreducible summands are 
$\frm_0=\la E_{45} \ra$, $\frm_1=\la E_{14}, E_{24}, E_{34} \ra$, and $\frm_2=\la E_{15}, E_{25}, E_{35} \ra$. 
The $\H$-module $\frm_0$ is trivial, and the $\H$-modules $\frm_1$ and $\frm_2$ are isomorphic. 
We define the basis vectors 
$e_i \coloneqq E_{i4}$, $e_{i+3} \coloneqq E_{i5}$, for $i\leq 3$, and $e_7 \coloneqq E_{45}$, and we denote the dual basis by 
$(e^1, \dots , e^7)$.
Notice that $\H$ acts on $\frm_j$ as $\SO(3)$ for $j=1,2$. This implies that $(\L^2 \frm^*)^\H= (\frm_1^*\otimes \frm_2^*)^\H$. We have  
$(\frm_1^*\otimes \frm_2^*)^\H=\la e^{14}+ e^{25} + e^{36} \ra = \la de^7 \ra$, where for the last equality we used that $[E_{i4}, E_{i5}]= E_{45}$. 
Therefore, $d(\Lambda^2 \frm^*)^\H=0$.

\smallskip

For $Q^{1,1,1}$ we consider $\G=\SU(2)^3$ and $\H=\U(1)^2$.
{Then, $\frg = \frsu(2) \oplus \frsu(2)\oplus \frsu(2)$ and }
we have a reductive splitting $\frg=\frm \oplus \frh$ with $\frm$ spanned by
\[
\begin{split}
& e_1 = (\sigma_2,0,0),\quad e_2 =  (\sigma_3,0,0), \quad e_3= (0,\sigma_2,0),\quad e_4= (0 ,\sigma_3,0),\\
& e_5= (0,0,\sigma_2),\quad e_6=(0,0,\sigma_3),\quad e_7=(\sigma_1,\sigma_1,\sigma_1),
\end{split}
\]
and $\frh = \la e_8= (\sigma_1,0,-\sigma_1),~e_9=(0,\sigma_1,-\sigma_1) \ra $, where the matrices
\begin{equation}\label{eq:sigma-generators}
\sigma_1=\begin{pmatrix}
i & 0 \\
0 & -i
\end{pmatrix}, \quad   
\sigma_2= \begin{pmatrix}
0 & 1 \\
-1 & 0 
\end{pmatrix},\quad 
\sigma_3= \begin{pmatrix}
0 & i \\
i & 0 
\end{pmatrix},
\end{equation}
define a basis of $\frsu(2)$. 
The decomposition of $\frm$ into $\Ad(\H)$-invariant irreducible subspaces is 
$\frm= \frm_1 \oplus \frm_2 \oplus \frm_3 \oplus \frm_4$, with $\frm_j= \la e_{2j-1},e_{2j} \ra$ for $j\leq 3$ and $\frm_4=\la e_7 \ra$. 
In addition, for $j\leq3$, $\H$ acts on $\frm_j$ as $\SO(2)$. 
We have that
$(\Lambda^2 \frm^*)^\H = \oplus_{j=1}^3 \Lambda^2 \frm_j^*$. Therefore, $d(\Lambda^2 \frm^*)^\H=0$.

\smallskip

For $M^{1,1,0}$ we consider $\G=\SU(3)\times \SU(2)$ and $\H=\SU(2) \times \U(1)$. 
Let $(e_1,\dots, e_8)$ be the basis of $\mathfrak{su}(3)$ {defined in the proof of Proposition \ref{PropAWpq} with $p=q=1$,} 
and let $(\sigma_1,\sigma_2,\sigma_3)$ be the basis of $\mathfrak{su}(2)$ defined above.  
{The Lie algebra $\frg = \frsu(3)\oplus\frsu(2)$ admits} a reductive splitting $\frg=\frm \oplus \frh$, 
where $\frh = \la f_8={(e_1,0)},
f_9=(e_2,0), f_{10}=(e_3,0), f_{11}= (e_8, 3\, \sigma_1)\ra$
and $\frm$ is spanned by $f_i=(e_{i+3},0)$, $i\leq 4$, $f_5=(0,\sigma_2)$, $f_6=(0,\sigma_3)$, and $f_7=({\diag(i,i,-2i)},0)$.
We denote by $(f^1,\dots, f^7)$ the dual basis. 
The decomposition of $\frm$ into $\Ad(\H)$-invariant irreducible summands is $\frm=\frm_1 \oplus \frm_2 \oplus \frm_3$, 
where $\frm_1=\la f_1,f_2,f_3,f_4\ra$, $\frm_2=\la f_5,f_6\ra$ and $\frm_3=\la f_7 \ra$. 
In addition, {$df^7=-\tfrac23 (f^{12} + f^{34} + f^{56})$. } 
We have that $(\Lambda^2 \frm^*)^\H= (\Lambda^2 \frm_1^*)^\H \oplus\Lambda^2 \frm_2^*=\la f^{12}+ f^{34},f^{56}\ra$. 
Thus, $(\Lambda^2 \frm^*)^\H=\la df^7, f^{56} \ra$. Since $d(f^{56})=0$, we obtain $d(\Lambda^2 \frm^*)^\H=0$.
\end{proof}

\begin{proposition} \label{prop:10-12}
There are no invariant strong $\G_2T$-structures on $V^{5,2}$, $Q^{1,1,1}$, and $M^{1,1,0}$. 
\end{proposition}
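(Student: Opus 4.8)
The plan is to argue by contradiction, feeding Lemma~\ref{lem:2-forms-10-12} together with the topological fact (recalled at the start of Section~\ref{sec:Irreducible}) that every space in Table~\ref{TabIrred} is simply connected with vanishing third Betti number. Suppose $M$ is one of $V^{5,2}$, $Q^{1,1,1}$, $M^{1,1,0}$; write $M=\G/\H$ with $\G$ the transitive group (which is $\SO(5)$, $\SU(2)^3$, or $\SU(3)\times\SU(2)$, hence compact), and assume that $M$ carries a $\G$-invariant strong $\G_2T$-structure $\f$. Its torsion $3$-form $T$ is obtained from $\f$ through \eqref{eqn:tor}, the Hodge operator being determined by $g_\f$ and the orientation induced by $\f$; therefore $T$ is $\G$-invariant, and by hypothesis $dT=0$.

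Next I would use $b_3(M)=0$ to write $T=d\beta$ for some $\beta\in\Omega^2(M)$. Since $\G$ is compact, averaging $\beta$ over $\G$ with respect to normalized Haar measure produces a $\G$-invariant $2$-form $\bar\beta$; because averaging commutes with $d$ and fixes the already $\G$-invariant form $T$, we get $d\bar\beta=T$. Thus $T$ lies in the space of invariant exact $3$-forms on $M$, which is $\{0\}$ by Lemma~\ref{lem:2-forms-10-12} (the cited proof shows precisely that $d$ annihilates all invariant $2$-forms on these three spaces). Hence $T=0$, so $\tau_0,\tau_1,\tau_3$ all vanish, i.e.\ $\f$ is parallel; then $g_\f$ is Ricci-flat and therefore flat by \cite{AK}. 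This contradicts $M$ being compact and simply connected of dimension $7$, and the argument disposes of all three spaces at once.

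I do not expect a real obstacle here: once Lemma~\ref{lem:2-forms-10-12} is in hand, Proposition~\ref{prop:10-12} follows essentially formally. The only point requiring a word of care is the passage from an arbitrary primitive $\beta$ of $T$ to a $\G$-invariant one, which relies on the compactness of $\G$ (valid in all three cases) and on the fact that $b_3(M)=0$ is a genuine cohomological property of $M$, not just of the invariant de Rham complex. As an aside, one may instead split according to whether the necessarily invariant Lee form $\theta$ of $\f$ vanishes — if $\theta=0$ then $\f$ is coclosed, $\tau_0$ is constant, $T$ is harmonic by Proposition~\ref{lem:co-closed}, and $b_3=0$ kills it directly — but the averaging argument above treats both cases uniformly and is cleaner.
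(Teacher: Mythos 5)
Your argument is correct, but it takes a somewhat different route from the paper. Both proofs hinge on Lemma~\ref{lem:2-forms-10-12} together with $b_3(M)=0$, but the paper never shows $T=0$: it observes that, since $d(\Lambda^2\frm^*)^\H=0$ and the invariant complex computes the (vanishing) degree-$3$ cohomology, the map $d\colon(\Lambda^3\frm^*)^\H\to(\Lambda^4\frm^*)^\H$ is injective, hence an isomorphism by equality of dimensions, so \emph{every} invariant $4$-form is exact and in particular closed; thus any invariant $\G_2$-structure is coclosed, and Proposition~\ref{lem:co-closed} (with $\tau_0$ constant by invariance) forces $b_3(M)>0$, a contradiction. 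You instead use $b_3(M)=0$ to write $T=d\beta$, average over the compact group to get an invariant primitive, and conclude $T=0$ from the lemma; this is a clean and valid alternative, and the averaging step is exactly the standard mechanism behind the paper's implicit use of the invariant complex. The paper's version has the side benefit of showing that \emph{all} invariant $\G_2$-structures on these three spaces are coclosed, while yours is more hands-on; note also that your endgame can be shortened: once $T=0$ the structure is parallel, so $\f$ itself is a nonzero harmonic $3$-form and $b_3(M)>0$ follows at once, with no need for the Alekseevsky--Kimelfeld theorem or the flatness/simple-connectivity argument (though that argument is also correct). Your aside about splitting on $\theta$ would, as you note, not handle the $\theta\neq0$ case by itself, but your main argument does not rely on it.
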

\begin{proof}
Let $\G/\H$ be one of these spaces. 
Then, $b_3(\G/\H)=0$ and $d(\Lambda^2 \frm^*)^\H=0$ by Lemma \ref{lem:2-forms-10-12}.
Therefore, the map $d \colon (\Lambda^3 \frm^*)^\H \to (\Lambda^4 \frm^*)^\H$ is an isomorphism.  
This implies that  every $\Ad(\H)$-invariant $4$-form on $\frm$ is closed. 
Proposition \ref{lem:co-closed} allows us to conclude that there are no invariant strong $\G_2T$-structures on $\G/\H$.
\end{proof}

%
\section{Reducible compact  homogeneous spaces}\label{sec:Reducible}

By \cite{Reid}, a reducible compact homogeneous space admitting invariant $\G_2$-structures must be one of   
those listed in Table \ref{TabRed}, up to a covering.    
We will make a case-by-case analysis to search for invariant strong $\G_2T$-structures on such spaces.

\begin{table}[h!]
\centering
\renewcommand\arraystretch{1.2}
\begin{tabular}{|c|c|c|c|}  
\hline
& $\G$& $\H$& $M=\G/\H$ \\ \hline 
1&	$\U(1)^7$				& $\{1_\G\}$ 	& $T^7$ \\ \hline
2&	$\SU(2)\times \U(1)^4$	& $\{1_\G\}$	& $S^3\times T^4$ \\ \hline
3&	$\SU(2)^2\times \U(1)$& $\{1_\G\}$ & $S^3\times S^3\times S^1$ \\ \hline
4&	$\SU(2)^2\times \U(1)^2$& $\U(1)$ & $S^3\times S^3\times S^1$ \\ \hline
5&	$\SU(2)^3\times \U(1)$& $\SU(2)$ & $S^3\times S^3\times S^1$ \\ \hline
6&	$\SU(3)\times \U(1)^2$& $\SU(2)$ & $S^5\times T^2$ \\ \hline
7&	$\SO(4)\times \U(1)^2$& $\SO(2)$ & $V^{4,2}\times T^2$ \\ \hline
8&	$\SU(3)\times \U(1)$& $\U(1)^2$ & ${F}(1,2)\times S^1$ \\ \hline
9&	$\Sp(2)\times\U(1)$& $\Sp(1)\times\U(1)$ & $\mathbb{C}P^3\times S^1$ \\ \hline
10&	$\G_2\times\U(1)$& $\SU(3)$ & $S^6\times S^1$ \\ \hline
\end{tabular}  \vspace{0.2cm}
\caption{Reducible compact homogeneous spaces admitting invariant $\G_2$-structures, up to a covering \cite{Reid}. }\label{TabRed}
\end{table}

We first deal with the case of $7$-dimensional reducible compact Lie groups admitting left-invariant $\G_2$-structures, namely  
$\U(1)^7\cong T^7$, $\SU(2)\times \U(1)^4\cong S^3\times T^4$ and $\SU(2)^2\times \U(1)\cong S^3 \times S^3 \times S^1$. 
{Recall that a left-invariant $\G_2$-structure $\f$ on a Lie group $\G$ is determined by the definite $3$-form $\f_{\sst{1_\G}}$ 
on $T_{\sst{1_\G}}\G\cong\frg$ and, conversely, left multiplication extends any definite $3$-form $\f$ on a Lie algebra $\frg$ 
to a left-invariant $\G_2$-structure on every corresponding Lie group. Thus, we shall work on the Lie algebra $\frg$ of $\G$. }

Left-invariant $\G_2$-structures on $T^7$ are clearly parallel. We now show the existence of left-invariant non-parallel 
strong $\G_2T$-structures on the remaining Lie groups.

\begin{proposition}\label{S3T4G2T}
The Lie group $\G=\SU(2)\times \U(1)^4$ admits left-invariant strong $\G_2T$-structures solving the twisted $\G_2$ equation \eqref{eqn:strom7}. 
\end{proposition}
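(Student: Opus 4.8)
The plan is to realize $\G$ as a product $N^{4}\times\R^{3}$ at the level of Lie algebras and to invoke Proposition~\ref{TCYG2link} with $n=2$. Write $\frg=\frsu(2)\oplus\R^{4}$ and split off one $\R$-summand, so that $\frg\cong(\frsu(2)\oplus\R)\oplus\R^{3}$ is the Lie algebra of $(\SU(2)\times\U(1))\times\R^{3}$, where $N^{4}\coloneqq\SU(2)\times\U(1)\cong S^{3}\times S^{1}$ is the homogeneous Hopf surface. Since a left-invariant $\G_2$-structure on a Lie group is the same datum as a definite $3$-form on its Lie algebra, it suffices to exhibit a left-invariant $\SU(2)$-structure $(\o,\Psi)$ on $N^{4}$ solving the twisted Calabi--Yau equation \eqref{TCY}: then the $3$-form \eqref{G2N4T3}, built out of $\o,\psip,\psim$ and the coordinate $1$-forms on the $\R^{3}$-factor, is left-invariant and, by Proposition~\ref{TCYG2link}, is a strong $\G_2T$-structure on $\frg$ solving the twisted $\G_2$ equation \eqref{eqn:strom7}.

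First I would write the $\SU(2)$-structure explicitly. Fix a basis $(e^{1},\dots,e^{7})$ of $\frg^{*}$ with $de^{1}=e^{23}$, $de^{2}=e^{31}$, $de^{3}=e^{12}$ a coframe on $\frsu(2)$, and $de^{4}=\dots=de^{7}=0$, with $e^{4}$ dual to the $\U(1)$-factor of $N^{4}$. Take (up to the usual orientation and normalization conventions)
\[
\o=e^{14}+e^{23},\qquad \psip=e^{13}+e^{42},\qquad \psim=e^{12}+e^{34},
\]
so that $(\o,\Psi=\psip+i\psim)$ is a left-invariant $\SU(2)$-structure on $N^{4}$ with associated metric $g=\sum_{i=1}^{4}e^{i}\otimes e^{i}$. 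A direct computation with the structure equations gives $d\o=e^{234}=e^{4}\wedge\o$, $d\psip=e^{134}=e^{4}\wedge\psip$ and $d\psim=e^{124}=e^{4}\wedge\psim$; hence the Lee form is the closed $1$-form $\theta_{\o}=e^{4}$ and $d\Psi=\theta_{\o}\wedge\Psi$. The latter identity forces the almost complex structure $J$ to be integrable, and $dd^{c}\o=0$ holds automatically, since $dd^{c}\o$ is a left-invariant $4$-form on the $4$-dimensional algebra, hence a constant multiple of $dV_{g}$, whose integral over the compact manifold $S^{3}\times S^{1}$ vanishes by Stokes' theorem (equivalently, one checks directly that $d^{c}\o=\pm e^{123}$ is closed). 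Thus $(\o,\Psi)$ solves \eqref{TCY}.

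Applying Proposition~\ref{TCYG2link} with $n=2$ and $(s^{1},s^{2},s^{3})=(e^{5},e^{6},e^{7})$ then produces the left-invariant definite $3$-form
\[
\f=e^{567}+e^{5}\wedge\o+e^{6}\wedge\psip-e^{7}\wedge\psim
\]
on $\frg$, which is a strong $\G_2T$-structure solving the twisted $\G_2$ equation \eqref{eqn:strom7}, with Lee form $\theta=e^{4}$ and torsion $3$-form $T=d^{c}\o$; by left-invariance this extends to the desired left-invariant $\G_2$-structure on $\G=\SU(2)\times\U(1)^{4}$, and rescaling the metric on $\frsu(2)$ and rotating $\Psi$ yields in fact a multi-parameter family. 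The only genuine work is the verification in the second step---the structure equations for $(\o,\Psi)$ and the twisted Calabi--Yau equation---which is elementary but requires some care with orientation and sign conventions; once it is in place, Proposition~\ref{TCYG2link} supplies everything else, so I do not expect a serious obstacle.
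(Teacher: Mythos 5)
Your overall route is the same as the paper's: produce a left-invariant $\SU(2)$-structure on the homogeneous Hopf surface $\SU(2)\times\U(1)$ solving the twisted Calabi--Yau equation \eqref{TCY} and apply the construction of Proposition \ref{TCYG2link} with $n=2$. However, your explicit data contain an orientation error that, as written, breaks the final step. With $g=\sum_{i=1}^{4}e^{i}\otimes e^{i}$ and $\o=e^{14}+e^{23}$, the almost complex structure determined by the paper's convention $\o=g(J\cdot,\cdot)$ has its $(2,0)$-forms spanned by $(e^{1}+ie^{4})\wedge(e^{2}+ie^{3})=(e^{12}+e^{34})+i\,(e^{13}+e^{42})$, so your $\Psi=\psip+i\psim=(e^{13}+e^{42})+i\,(e^{12}+e^{34})$ is of type $(0,2)$: the ordered triple $(\o,\psip,\psim)=(e^{14}+e^{23},\,e^{13}+e^{42},\,e^{12}+e^{34})$ carries the reversed orientation of $\Lambda^{2}_{+}$ compared with the model triple $(e^{45}+e^{67},\,e^{46}-e^{57},\,e^{47}+e^{56})$ for which formula \eqref{G2N4T3} is calibrated. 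This is not merely cosmetic: your $3$-form equals $\f=e^{567}+e^{145}+e^{235}+e^{136}-e^{246}-e^{127}-e^{347}$, and a direct computation of the bilinear form of Section \ref{SecDefForm7} gives $b_\f(e_1,e_1)=-e^{1234567}$ while $b_\f(e_5,e_5)=+e^{1234567}$ (the diagonal signs are $(-,-,-,-,+,+,+)$), so $\f$ is a stable $3$-form of split type, not definite, and hence not a $\G_2$-structure. Being merely an orthogonal triple of self-dual forms with $d\o=\theta_\o\wedge\o$ and $d\Psi=\theta_\o\wedge\Psi$ is not enough; the cyclic ordering of $(\o,\psip,\psim)$ matters for \eqref{G2N4T3}.

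The repair is immediate: interchange $\psip$ and $\psim$, i.e.\ take $\psip=e^{12}+e^{34}$ and $\psim=e^{13}+e^{42}$ (equivalently $\Psi=(e^{12}+e^{34})+i\,(e^{13}+e^{42})$). Since each of the three self-dual forms satisfies the same identity $d(\cdot)=e^{4}\wedge(\cdot)$, all of your verifications ($\theta_\o=e^{4}$ closed, $d\Psi=\theta_\o\wedge\Psi$, integrability, $dd^{c}\o=0$) persist verbatim, the corrected pair is a genuine twisted Calabi--Yau $\SU(2)$-structure in the paper's sense, and \eqref{G2N4T3} then yields a definite $3$-form, so Proposition \ref{TCYG2link} gives the statement exactly as you intend. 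After this fix your argument coincides with the paper's proof, which uses the same Hopf-surface example (there written with $\frsu(2)=\la e_4,e_5,e_6\ra$, $\o=e^{45}+e^{67}$, $\psip=e^{46}-e^{57}$, $\psim=e^{47}+e^{56}$, $\theta_\o=e^{7}$, $T=d^{c}\o=e^{456}$); the sign issue you flagged but left unresolved is the only genuine defect.
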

\begin{proof}
The homogeneous Hopf surface $S^3 \times S^1 \cong \SU(2)\times\U(1)$ admits a left-invariant $\SU(2)$-structure $(\o,\Psi)$ 
solving the twisted Calabi-Yau equation \eqref{TCY} (cf.~\cite{GRST}). 
We can then obtain a left-invariant strong $\G_2T$-structure on $\G \cong  \SU(2)\times\U(1) \times \U(1)^3$ 
solving the twisted $\G_2$ equation \eqref{eqn:strom7} as explained in the proof of Proposition \ref{TCYG2link}.

Let us consider the splitting $\frg= \frg_3\oplus\frg_4$, with $\frg_3=\R^3$ and $\frg_4=\mathfrak{su}(2)\oplus \R$. 
We choose a basis $(e_1,\ldots,e_7)$ of $\frg$ such that $\frg_3=\la e_1,e_2,e_3\ra$, $\frsu(2) = \la e_4,e_5,e_6\ra$, $\R=\la e_7\ra$, 
and whose dual basis $(e^1,\ldots,e^7)$ satisfies the structure equations 
\[
de^1=de^2=de^3=de^7=0,\quad de^4=e^{56},\quad de^5=-e^{46},\quad de^6=e^{45}. 
\] 
Consider the $\SU(2)$-structure on $\frg_4$ defined by the forms 
$\o = e^{45}+e^{67}$, $\psip=e^{46}-e^{57}$, $\psim=e^{47}+e^{56}$ and the inner product $g = \sum_{i=4}^7e^i\odot e^i$. 
Its Lee form is $\theta_\o = e^7$ and we easily see that $(\o,\Psi)$ satisfies the equations \eqref{TCY}. 
Notice that  $d^c\omega = Jd\omega = e^{456}$.
Therefore, the $3$-form 
\[
\varphi=e^{123}+ e^1\wedge \o + e^2\wedge \psip - e^3\wedge \psim,
\]
defines a strong $\G_2T$-structure on $\frg$ with Lee form $\theta=e^7$ and torsion $3$-form $T=d^c\o$ inducing the inner product 
$g_\f = \sum_{i=1}^3 e^i\odot e^i + g$ and solving the twisted $\G_2$ equation \eqref{eqn:strom7}. 

\smallskip 

The metric $g$ gives rise to a bi-invariant metric on $\SU(2)$ of constant sectional curvature $\tfrac14$ and  
the torsion $3$-form of the strong $\G_2T$-structure corresponds to the bi-invariant harmonic $3$-form $T = -g([\cdot,\cdot],\cdot)$ of $\SU(2)$. 
\end{proof}

\begin{proposition}\label{S3S3S1G2T}
The Lie group $\G=\SU(2)^2\times \U(1)$ admits left-invariant strong $\G_2T$-structures 
solving the twisted $\G_2$ equation \eqref{eqn:strom7} and coclosed $\G_2$-structures with closed torsion $3$-form. 
\end{proposition}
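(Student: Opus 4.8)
The plan is to produce the required structures on $\G=\SU(2)^2\times\U(1)$ in the form $\f=\o\W e^7+\psip$, where $e^7$ is the closed left-invariant $1$-form dual to the $\U(1)$-factor and $(\o,\Psi)$ is a suitable left-invariant $\SU(3)$-structure on the $6$-dimensional factor $\SU(2)\times\SU(2)\cong S^3\times S^3$, and then to invoke propositions \ref{prop:product-closedT} and \ref{prop:product-str}. First I would fix a basis $(e_1,\dots,e_7)$ of $\frg=\frsu(2)\oplus\frsu(2)\oplus\R$ adapted to this splitting, with dual basis $(e^1,\dots,e^7)$, $de^7=0$, and the Maurer-Cartan structure equations for the two copies of $\frsu(2)$. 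On $\frsu(2)\oplus\frsu(2)$ I would then write down an explicit left-invariant $\SU(3)$-structure $(\o,\Psi)$ by means of an ansatz whose fundamental $2$-form pairs the two $\frsu(2)$-summands, arranged so that $\o,\psip,\psim$ obey the compatibility and normalization relations $\o\W\psi_\pm=0$ and $3\,\psip\W\psim=2\,\o^3$, and leaving a small number of free parameters.

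Next I would compute $d\o,d\psip,d\psim$ from the structure equations and decompose them as in Remark \ref{RemTCYSU3}, reading off the intrinsic torsion forms $w_1^\pm,w_2^\pm,w_3,w_4,w_5$; these are automatically constant, being left-invariant functions on a connected Lie group, and $w_2^+=0$ is forced by compactness of $\SU(2)^2$. The free parameters are to be tuned so that $w_4=w_5=0$, $w_2^-=0$, $(w_1^+,w_1^-)\neq(0,0)$, and the closedness-of-torsion condition \eqref{G2TNSdT} holds; by part (\ref{prodclosed2}) of Proposition \ref{prop:product-closedT} the latter is equivalent to $\delta_g w_3=\big((w_1^+)^2+(w_1^-)^2\big)\,\o$. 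Since $w_1^\pm$ are not both zero, the almost complex structure of $(\o,\Psi)$ is non-integrable (Remark \ref{RemTCYSU3}), so this $\SU(3)$-structure does not solve the twisted Calabi-Yau equation \eqref{TCY}; in particular the example falls outside the scope of Proposition \ref{TCYG2link}.

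Granting such an $(\o,\Psi)$, part (\ref{prodclosed1}) of Proposition \ref{prop:product-closedT} (with $\lambda=2w_1^-$) shows that $\f=\o\W e^7+\psip$ is a strong $\G_2T$-structure on $\frg$ with closed Lee form $\theta=2w_1^-e^7$ and harmonic torsion $3$-form $T=\tfrac12 w_1^+\psip+\tfrac12 w_1^-\psim+\star_6 w_3$; left translation turns it into a left-invariant strong $\G_2T$-structure on $\G$. Because $w_2^+=0$ and $(w_1^+,w_1^-)\neq(0,0)$, the hypotheses of Proposition \ref{prop:product-str} are met, so $\G$ also carries a left-invariant $\G_2$-structure solving the twisted $\G_2$ equation \eqref{eqn:strom7} and a left-invariant coclosed $\G_2$-structure, both inducing the same metric $g_\f$ and the same harmonic --- in particular closed --- torsion $3$-form (the modifications of Proposition \ref{prop:product-str} rescale $\psip$ by constants, hence preserve left-invariance). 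This is exactly the assertion of the proposition.

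The crux is the explicit choice in the second step. The purely algebraic constraints $w_4=w_5=w_2^\pm=0$ and $(w_1^+,w_1^-)\neq(0,0)$ can be arranged by exploiting the symmetry between the two $\frsu(2)$-factors, but the relation $\delta_g w_3=\big((w_1^+)^2+(w_1^-)^2\big)\o$ couples the (necessarily nonzero) $\Omega^3_{12}$-component $w_3$ of $d\o$ to the scalars $w_1^\pm$ in a nontrivial way. The naive candidates with $w_3=0$ --- such as nearly K\"ahler structures, for which $d\big(w_1^+\psip+w_1^-\psim\big)=\big((w_1^+)^2+(w_1^-)^2\big)\o^2\neq 0$ --- fail this relation, so one is forced to deform $w_3$ away from zero and to verify the balance by a direct computation in the chosen basis.
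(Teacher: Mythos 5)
Your reduction of the statement is exactly the paper's: write $\f=\o\W e^7+\psip$ for a left-invariant $\SU(3)$-structure on $\frsu(2)\oplus\frsu(2)$ satisfying \eqref{G2TNS} and \eqref{G2TNSdT} with $w_2^+=0$ and $(w_1^+,w_1^-)\neq(0,0)$, then feed it into propositions \ref{prop:product-closedT} and \ref{prop:product-str}; your bookkeeping of which parts of those propositions give the strong $\G_2T$-structure, the solution of \eqref{eqn:strom7}, and the coclosed structure with closed torsion is correct, as is the observation that the constant-coefficient rotations of $\psip$ in Proposition \ref{prop:product-str} preserve left-invariance.

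The genuine gap is that the proposition is an existence statement, and the existence is never established: you describe an ansatz with free parameters and say they are ``to be tuned'' so that $w_4=w_5=w_2^\pm=0$, $(w_1^+,w_1^-)\neq(0,0)$ and $\delta_g w_3=\bigl((w_1^+)^2+(w_1^-)^2\bigr)\o$ hold, but you neither exhibit a solution nor argue that one exists. You yourself identify this balance condition as ``the crux'' and note that the naive candidates ($w_3=0$, e.g.\ nearly K\"ahler) fail it, so the whole content of the proof is precisely the step you defer. The paper closes this by writing down the structure explicitly: with the structure equations $de^1=e^{23}$, $de^2=-e^{13}$, $de^3=e^{12}$, $de^4=e^{56}$, $de^5=-e^{46}$, $de^6=e^{45}$, $de^7=0$, it takes $\o=e^{14}+e^{25}-e^{36}$, $\psip=e^{123}+e^{156}-e^{246}-e^{345}$, $\psim=e^{456}+e^{234}-e^{135}-e^{126}$, $g=\sum_{i=1}^6 e^i\odot e^i$, and verifies directly that the only nonzero torsion forms are $w_1^\pm=\tfrac12$ and a closed $w_3$ with $d\star_6 w_3=-\tfrac14\o^2=-\tfrac12\,d\bigl(w_1^+\psip+w_1^-\psim\bigr)$, yielding $\theta=e^7$ and $T=e^{123}+e^{456}$. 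Until you either produce such an explicit structure and check this identity, or give an independent argument that the constrained system on left-invariant data is solvable, the proposal does not prove the proposition (a minor additional point: $w_2^+=0$ is not forced by compactness alone but by compactness together with $dw_2^+=0$ from \eqref{G2TNSdT}; in an explicit left-invariant example one simply reads it off).
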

\begin{proof}

We describe an $\SU(3)$-structure on $\frsu(2)\oplus\frsu(2)$ that satisfies the hypothesis of (\ref{prodclosed1}) 
of Proposition \ref{prop:product-closedT} with $w_2^+=0$ and thus gives rise to a strong $\G_2T$-structure on 
$\frg = \frsu(2)\oplus\frsu(2)\oplus\R$. 
Proposition \ref{prop:product-str} guarantees then the existence of a strong $\G_2T$-structure solving the 
twisted $\G_2$ equation \eqref{eqn:strom7} and of a coclosed $\G_2$-structure on $\frg$. 
All these structures share the same metric and harmonic torsion $3$-form $T$. 

Let us choose a basis $(e^1,\ldots,e^7)$ of $\frg^*$ so that 
$\frsu(2)^*=\la e^1,e^2,e^3\ra$, $\frsu(2)^*=\la e^4,e^5,e^6\ra$, $\R^* = \la e^7\ra$, and the structure equations are the following  
\[
de^1=e^{23},~de^2=-e^{13},~de^3=e^{12},~de^4=e^{56},~de^5=-e^{46},~de^6=e^{45},~de^7=0. 
\]
Consider the $\SU(3)$-structure on $\frsu(2)\oplus\frsu(2)$ defined by  
\[
\omega = e^{14} + e^{25} - e^{36},\quad \psip = e^{123} + e^{156}-e^{246} - e^{345},	\quad \psim = e^{456} + e^{234} - e^{135}- e^{126}, 
\]
and with corresponding inner product  $g = \sum_{i=1}^6 e^i \odot e^i$. 
The only non-vanishing intrinsic torsion forms are $w_1^{\pm}=\frac{1}{2}$ and the closed $3$-form
\[
w_3 = \frac14\left(3\,e^{123}-3\,e^{456}  -e^{126} -e^{135}-e^{156}  +e^{234} +e^{246}  +e^{345}\right),
\]
so that
\[
d\o = -\frac34\psip+\frac34\psim +  w_3,\quad d\psi_{\pm} = \frac12 \o^2. 
\]
In addition, $d\star_6 w_3= -\frac{1}{4}\omega^2=-\frac12 d\left( w_1^+\psip +  w_1^- \psim\right)$.
Then, the $3$-form $\f = \o\W e^7 +\psip$ defines a strong $\G_2T$-structure inducing the inner product $g_\f = g + e^7\odot e^7$ on $\frg$. 
Its Lee form is the closed $1$-form $\theta= e^7$ and its torsion form is the harmonic $3$-form
\[
T = \frac12 w_1^+\psip + \frac12 w_1^- \psim +\star_6 w_3  = e^{123}+e^{456}. 
\]
 By Proposition \ref{prop:product-str}, the $\SU(3)$-structure $\left(\omega, \widetilde{\Psi}\right)$ with $\widetilde{\psi}_+=\frac{1}{\sqrt{2}}(\psi_+ - \psi_-)$  
gives rise to a strong $\G_2T$-structure on $\frg$ solving the twisted $\G_2$ equation \eqref{eqn:strom7}.  
On the other hand, the $\SU(3)$-structure $\left(\omega, \widehat{\Psi}\right)$ with $\widehat{\psi}_+=\frac{1}{\sqrt{2}}(\psi_+ + \psi_-)$ 
gives rise to a coclosed $\G_2$-structure on $\frg$. 
Both these $\G_2$-structures induce the inner product $g_\f$ and have torsion $3$-form $T=e^{123}+e^{456}$. 

\smallskip

In this example, the inner product $g$ gives rise to a bi-invariant metric on $\SU(2)\times \SU(2)$ that is given by the product of 
bi-invariant metrics of constant sectional curvature $\tfrac14$ on each factor. 
Moreover, the torsion $3$-form of the strong $\G_2T$-structure 
corresponds to the sum of the bi-invariant harmonic $3$-forms $T = -g([\cdot,\cdot],\cdot)$ of each factor $\SU(2)$.  
\end{proof}

\begin{remark}
The $\SU(3)$-structure $\left(\omega, \frac{1}{\sqrt{2}}(\psip - \psim)\right)$ on $\frsu(2)\oplus\frsu(2)$ is double-half flat. 
This example was obtained in  \cite{ScSc}.
\end{remark}

\begin{remark}
In the examples described in the proofs of Proposition \ref{S3T4G2T} and Proposition \ref{S3S3S1G2T}, 
the $\G_2$-connection with totally skew-symmetric torsion $\nabla = \nabla^{g_\f} +\tfrac12g_\f^{-1}T$ is flat. 
To see this, recall that every compact (semi)simple Lie group endowed with a bi-invariant metric $g$ 
admits two flat metric connections with totally skew-symmetric torsion $T(X,Y,Z)=\pm g([X,Y],Z)$ (see, e.g., \cite{AF}). 
The metric on $\SU(2)\times\U(1)^4$ is the product of a bi-invariant metric $g$ on the compact simple Lie group $\SU(2)$ and 
the flat metric on the Abelian factor. Moreover, the torsion $3$-form is the bi-invariant harmonic $3$-form $T = - g([\cdot,\cdot],\cdot)$ on $\SU(2)$. 
Similarly, the metric on $\SU(2)\times\SU(2)\times\U(1)$ is the product of two copies of the same bi-invariant metric $g$ on $\SU(2)$ and 
the flat metric on the Abelian factor, and the torsion $3$-form is the sum of the harmonic $3$-forms $T = - g([\cdot,\cdot],\cdot)$ of the simple factors. 
\end{remark}

Cases $4$ and $5$ of Table \ref{TabRed} are both homogeneous spaces diffeomorphic to $S^3 \times S^3 \times S^1$. 
In the first case, $\G= \SU(2)^2 \times \U(1)^2$ and $\H \cong \U(1)$  
is transversely embedded into ${\G}' \coloneqq \SU(2)^2 \times \U(1)$ 
as follows:
\[
\H = \left\{ \left(\diag\left(z, z^{-1}\right),\diag\left(z, z^{-1}\right), z\right) \st z \in \U(1) \right\}. 
\]
We then have a diffeomorphism
$
F \colon \SU(2) \times \SU(2) \to {\G}'/\H,
$ 
$
(g_1,g_2) \longmapsto [(g_1,g_2,1)].
$

In the second case, $\G= \SU(2)^3\times \U(1)$ and $\H\cong \SU(2)$ 
is diagonally embedded into ${\G}' \coloneqq \SU(2)^3$ as the subgroup
\[
\H=\Delta\SU(2) = \left\{ (h,h,h) \st h \in \SU(2) \right\}. 
\]
Thus, we have a diffeomorphism
$
F \colon \SU(2) \times \SU(2) \to {\G}'/\H,
$
$
(g_1,g_2) \longmapsto [(g_1,g_2,1)].
$

In both cases, it is easy to check that there is a $\G'$-invariant SU(3)-structure on $\G'/\H$ whose pull-back via $F$ is the left-invariant SU(3)-structure on 
$\SU(2)\times\SU(2)$ considered in the proof of Proposition \ref{S3S3S1G2T}. We then have the following. 

\begin{proposition}
Let $(\G,\H)=(\SU(2)^2 \times \U(1)^2,\U(1))$ or $(\G,\H)=(\SU(2)^3 \times \U(1),\SU(2))$. Then,
$\G/\H$ admits left-invariant strong $\G_2T$-structures 
solving the twisted $\G_2$ equation \eqref{eqn:strom7} and coclosed $\G_2$-structures with closed torsion $3$-form. 
\end{proposition}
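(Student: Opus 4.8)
The plan is to reduce everything to Proposition~\ref{prop:product-str}, using the diffeomorphism $F$ to carry the $\SU(3)$-structure of Proposition~\ref{S3S3S1G2T} from $\SU(2)\times\SU(2)$ to the space at hand. In both cases one has $\G=\G'\times\U(1)$ with $\H\subset\G'$, so that $\G/\H=(\G'/\H)\times S^1$; moreover, as observed just before the statement, $\G'/\H$ carries a $\G'$-invariant $\SU(3)$-structure $(\omega,\Psi)$ whose pull-back under $F\colon\SU(2)^2\to\G'/\H$ is exactly the left-invariant $\SU(3)$-structure built in the proof of Proposition~\ref{S3S3S1G2T}. Since intrinsic torsion forms are natural under diffeomorphisms, the intrinsic torsion of $(\omega,\Psi)$ on $N\coloneqq\G'/\H$ is the one computed there: $w_1^\pm\equiv\tfrac12$, $w_2^\pm=0$, and $w_3$ closed with $\delta_g w_3=\big((w_1^+)^2+(w_1^-)^2\big)\,\omega$. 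In particular $(\omega,\Psi)$ satisfies \eqref{G2TNS} and \eqref{G2TNSdT} with $w_2^+=0$ and $(w_1^+,w_1^-)=(\tfrac12,\tfrac12)\neq(0,0)$, so the hypotheses of Proposition~\ref{prop:product-str} are fulfilled.

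Next I would let $\eta$ be a bi-invariant (hence closed) $1$-form on the circle factor, which pulls back to a $\G$-invariant closed $1$-form on $\G/\H$. Applying Proposition~\ref{prop:product-str} to $N=\G'/\H$ produces, besides the strong $\G_2T$-structure $\varphi=\omega\wedge\eta+\psi_+$, a $\G_2$-structure solving the twisted $\G_2$ equation \eqref{eqn:strom7} and a coclosed $\G_2$-structure, all three sharing the same associated metric and the same harmonic torsion $3$-form. It then only remains to note that all of these are $\G$-invariant: each is of the form $\omega\wedge\eta+(a\,\psi_++b\,\psi_-)$ with $a,b$ constant, $a^2+b^2=1$, and replacing $\psi_+$ by a fixed combination $a\,\psi_++b\,\psi_-$ preserves $\G'$-invariance, while wedging with the $\G$-invariant form $\eta$ keeps the result $\G$-invariant on $N\times S^1=\G/\H$. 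This yields the two families asserted in the statement (and, as in the remark following Proposition~\ref{S3S3S1G2T}, the associated $\G_2$-connection with totally skew-symmetric torsion is flat, the metric being a product of bi-invariant metrics on the two $\SU(2)$-factors with the flat metric on $S^1$, and $T$ the sum of the harmonic $3$-forms $-g([\cdot,\cdot],\cdot)$).

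The one point that genuinely has to be verified — everything else being a formal assembly of Propositions~\ref{S3S3S1G2T}, \ref{prop:product-closedT} and \ref{prop:product-str} — is that the left-invariant $\SU(3)$-structure of Proposition~\ref{S3S3S1G2T} descends to a $\G'$-invariant structure on $\G'/\H$. Transported through $F$, the $\G'$-action on $\G'/\H$ becomes the action on $\SU(2)^2$ by left translations of $\SU(2)^2$ composed with right translations by the subgroup $K\subset\SU(2)^2$ ($1$-dimensional in case~$4$, a copy of $\SU(2)$ in case~$5$) cut out by the embedding $\H\hookrightarrow\G'$; invariance under the left translations is automatic, so the content is the $\Ad(K)$-invariance of $\omega$ and $\psi_\pm$. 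I expect this to be the only mildly delicate step: it reduces, via the explicit coordinate expressions, to checking that $K$ acts on the two $\frsu(2)$-summands by conjugations whose combined effect fixes $\omega=e^{14}+e^{25}-e^{36}$ and $\psi_\pm$, which is precisely the routine verification referred to as "easy" above.
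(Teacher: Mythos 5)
Your proposal is correct and is essentially the paper's own argument: the paper likewise just observes that the left-invariant $\SU(3)$-structure from the proof of Proposition \ref{S3S3S1G2T} is the pull-back under $F$ of a $\G'$-invariant $\SU(3)$-structure on $\G'/\H$, and the conclusion then follows from Propositions \ref{prop:product-closedT} and \ref{prop:product-str} exactly as you describe, with the right-$K$-invariance check left implicit ("easy to check"). One caveat on the verification you defer: with the naive identical identification of the two $\frsu(2)$-summands, the $2$-form fixed by the simultaneous adjoint action of the full diagonal is $e^{14}+e^{25}+e^{36}$ rather than $\omega=e^{14}+e^{25}-e^{36}$, so the check only goes through after aligning the basis of the second summand via a suitable automorphism (acting as $\diag(-1,-1,1)$ in the adjoint picture), a freedom which the structure equations used in Proposition \ref{S3S3S1G2T} do allow and which then also makes $\psi_\pm$ and $g$ invariant.
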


We now discard the existence of invariant strong $\G_2T$-structures on the spaces $6$ and $7$ of Table \ref{TabRed}, namely 
$S^5\times T^2$ and $V^{4,2}\times T^2$. 
These spaces  share some similarities and we describe them using a unified notation that allows us arguing simultaneously 
in the first part of the discussion. However, we will need to analyze them separately at the end.

\smallskip

Consider the space $S^5\times T^2 = \SU(3)\times \U(1)^2/\SU(2)$, 
where the embedding $\SU(2) \subset \SU(3)$ is the standard one. 
Let $\G=\SU(3)\times \U(1)^2$ and $\H=\SU(2)$.  
We choose a reductive decomposition $\frg= \frm \oplus \frh$  
with $\frh=\frsu(2)$ and $\frm= \frm' \oplus \R \oplus \R$, 
where $\frm' \subset \frsu(3)$ is spanned by
\[
\begin{split}
e_1 &= \begin{pmatrix}
0 & 0 & 1 \\
0 & 0 & 0 \\
-1 & 0 & 0 
\end{pmatrix}, \quad
e_2=  \begin{pmatrix}
0 & 0 & i \\
0 & 0 & 0 \\
i & 0 & 0  \end{pmatrix},\quad
e_3=\begin{pmatrix}
0 & 0 & 0 \\
0 & 0 & -1 \\
0 & 1 & 0 
\end{pmatrix},\quad
e_4 = \begin{pmatrix}
0 & 0 & 0 \\
0 & 0 & -i \\
0 & -i & 0 
\end{pmatrix},\\
e_5 &= \diag\left(i,i,-2i\right). 
\end{split}
\]
The $\Ad(\H)$-invariant subspace $\frm'$ is identified with the tangent space to $S^5=\SU(3)/\SU(2)$ at the identity coset, and 
it decomposes into $\Ad(\H)$-invariant irreducible summands as $\frm'\cong \frm_4\oplus \la e_5\ra$, where $\frm_4=\la e_1,e_2,e_3,e_4 \ra$. 

Denote by $(e_6,e_7)$ the standard basis of the subspace $\R \oplus \R\subset\frm$, so that $\frm=\la e_1,\ldots,e_7\ra$, 
and let $(e^1,\dots,e^7)$ be the dual basis of $\frm^*$. 

There is an $\Ad(\H)$-invariant decomposition $\frm= \frm_4 \oplus \frm_3$, where $\frm_4$ is irreducible, and
$\H$ acts trivially on 
$\frm_3= \la e_5, e_6, e_7 \ra$.
Consequently, the decomposition $\frm= \frm_4 \oplus \frm_3$ is orthogonal 
with respect to any $\Ad(\H)$-invariant inner product. 
The space of $\Ad(\H)$-invariant $2$-forms decomposes as follows
\[
(\L^2 \frm^*)^\H \cong ( \L^2 \frm_4^*)^\H \oplus \L^2 \frm_3^*, 
\] 
where 
\[
(\L^2 \frm_4^*)^\H  = \left\la 
\omega_1\coloneqq e^{12}- e^{34},\, 
\omega_2\coloneqq {e^{13}+ e^{24}},\, 
\omega_3\coloneqq  e^{14}- e^{23}
 \right\ra.
\]
Moreover, we have the following equalities
\begin{equation} \label{eq:str-diff-1}
\omega_1 = -de^5, \quad
d\omega_2 = -c\, \omega_3 \wedge e^5 \quad
d\omega_3 = c \,\omega_2 \wedge e^5,
\end{equation}
where $c=6$.
The space of $\Ad(\H)$-invariant $4$-forms decomposes as 
$
(\L^4 \frm^*)^\H= \L^4 \frm_4^* \oplus \L^2 \frm_3^* \otimes (\L^2 \frm_4^*)^\H. 
$

\smallskip

Consider now the space $V^{4,2}\times T^2 = \SU(2)^2\times \U(1)^2/\U(1)$. 
Here $\G=\SU(2)^2\times \U(1)^2$ and $\H\cong \U(1)$ is diagonally embedded into $\SU(2)^2$ as the subgroup
\begin{equation}\label{U1diag}
\H = \left\{ \left(\diag\left(z,z^{-1}\right),\diag\left(z,z^{-1}\right)\right) \st z \in \U(1) \right\}.
\end{equation}
We choose a reductive decomposition $\frg= \frm \oplus \frh$ with $\frm= \frm' \oplus {\R \oplus \R}$.  
Here, $\frm' \subset \frsu(2)\oplus \frsu(2)$ is identified with the tangent space to $V^{4,2} =  \SU(2)^2 / \U(1)$ at the identity coset. 
Let $(\sigma_1,\sigma_2,\sigma_3)$ be the generators of $\frsu(2)$ described in \eqref{eq:sigma-generators}. 
Then, a basis of $\frm'$ is given by
\[
e_1 =\left(\sigma_2,0\right), \quad e_2 =\left(\sigma_3,0\right),\quad
e_3=\left(0,\sigma_2 \right), \quad e_4=\left(0,\sigma_3 \right), \quad e_5= \left(\sigma_1,-\sigma_1 \right), 
\]
and we choose the generator $\left(\sigma_1,\sigma_1 \right) $ of $\frh \subset \frsu(2)\oplus \frsu(2)$. 
Then, $\frm'$ decomposes into $\Ad(\H)$-invariant irreducible summands as 
$\frm'\cong \frm_{2,1} \oplus \frm_{2,2} \oplus \la e_5\ra$, where $\frm_{2,k}=\la e_{2k-1}, e_{2k} \ra$, for $k=1,2$. 

As in the previous case, we denote by $(e_6,e_7)$ the standard basis of the subspace $\R \oplus \R\subset\frm$, 
so that $(e_1,\dots,e_7)$ is a basis of $\frm$, and we denote by $(e^1,\dots,e^7)$ the dual basis of $\frm^*$.  

We have an $\Ad(\H)$-invariant decomposition $\frm= \frm_4 \oplus \frm_3$, where $\frm_4=\frm_{2,1} \oplus \frm_{2,2}$ and 
$\frm_3= \la e_5, e_6, e_7 \ra$ is a trivial $\Ad(\H)$-module.  
In particular, the decomposition $\frm= \frm_4 \oplus \frm_3$ is orthogonal with respect to any $\Ad(\H)$-invariant inner product on $\frm$.

The space of $\Ad(\H)$-invariant $2$-forms decomposes as follows
\[
(\L^2 \frm^*)^\H \cong \L^2 \frm_{2,1}^* \oplus \L^2 \frm_{2,2}^* \oplus ( \frm_{2,1}^*\otimes \frm_{2,2}^*)^\H \oplus \L^2 \frm_3^*, 
\] 
where 
\[
( \frm_{2,1}^*\otimes \frm_{2,2}^*)^\H  = \left\la 
\omega_2\coloneqq  e^{13}+ e^{24},\, 
\omega_3\coloneqq e^{14}- e^{23}
 \right\ra.
\]
We also let $\omega_0\coloneqq e^{12}+e^{34}$ and $\omega_1 \coloneqq e^{12}- e^{34}$. 
We have the following equalities 
\begin{equation} \label{eq:str-diff-2}
d\omega_0=0, \quad
\omega_1 = -de^5, \quad
d\omega_2 = -c\, \omega_3 \wedge e^5, \quad
d\omega_3 = c \,\omega_2 \wedge e^5,
\end{equation}
where $c=4$. 
The space of $\Ad(\H)$-invariant $4$-forms decomposes as 
$
(\L^4 \frm^*)^\H= \L^4 \frm_4^* \oplus \L^2 \frm_3^* \otimes (\L^2 \frm_4^*)^\H. 
$

\smallskip

The next lemma shows that the Lee form of any $\Ad(\H)$-invariant $\G_2T$-structure $\f$ on $\frm$ is closed in both cases under exam.  
This implies in particular that $\theta \in \la e^6, e^7 \ra$. 
\begin{lemma} \label{lem:lee-closed}
The Lee form of any $\Ad(\H)$-invariant $\G_2T$-structure $\f$ on $\frm$ is closed.
\end{lemma}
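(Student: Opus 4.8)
The plan is to recover the Lee form from a single component of equation \eqref{IntG2}. Since $\theta$ is built from $\f$ and the $\G$-invariant Hodge operator, it is an $\Ad(\H)$-invariant $1$-form, hence $\theta\in(\frm^*)^\H$. In both cases under examination $\H$ acts trivially on $\frm_3$ and without a trivial summand on $\frm_4$, so $(\frm^*)^\H=\frm_3^*=\la e^5,e^6,e^7\ra$ and $\theta=\lambda_5\,e^5+\lambda_6\,e^6+\lambda_7\,e^7$ for constants $\lambda_5,\lambda_6,\lambda_7$. Because $de^6=de^7=0$ and $de^5=-\omega_1$ by \eqref{eq:str-diff-1} (resp.\ \eqref{eq:str-diff-2}), one has $d\theta=-\lambda_5\,\omega_1$, so it suffices to show $\lambda_5=0$.

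The next step is to write $\star\f$ explicitly: since $(\L^4\frm^*)^\H=\L^4\frm_4^*\oplus\L^2\frm_3^*\otimes(\L^2\frm_4^*)^\H$ and $\L^4\frm_4^*=\la e^{1234}\ra$,
\[
\star\f=\lambda\,e^{1234}+e^{56}\wedge\gamma_{56}+e^{57}\wedge\gamma_{57}+e^{67}\wedge\gamma_{67},\qquad \gamma_{ij}\in(\L^2\frm_4^*)^\H .
\]
I would then check, along the lines of Proposition \ref{prop:integrable-Sp(2)-SU(3)}, that definiteness forces $\lambda\neq0$: if $\lambda=0$, the trivector $\widehat{\star\f}$ with $\iota_{\widehat{\star\f}}\Omega=\star\f$ has every summand equal to a wedge of one of $e_5,e_6,e_7$ with a bivector in $\frm_4$, so $\iota_{e^5}\widehat{\star\f}\in\L^2\frm_4$, its square lies in $\L^4\frm_4$, and wedging with $\widehat{\star\f}$ (each of whose summands already uses two vectors of $\frm_4$) vanishes; hence $B_{\star\f}(e^5,e^5)=0$ and $\star\f$ is not definite.

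Finally I would expand both sides of $d\star\f=\theta\wedge\star\f$ in the invariant basis and compare the coefficient of $e^{12345}$. Since $\omega_1=-de^5$ is closed and $e^{1234}$ is proportional to $\omega_1\wedge\omega_1$, we get $d(e^{1234})=0$; using moreover $d\omega_0=0$, $d\omega_2=-c\,\omega_3\wedge e^5$, $d\omega_3=c\,\omega_2\wedge e^5$ and $\omega_1\wedge\omega_k=0$ for $k\neq1$, a short computation gives $d(e^{56}\wedge\gamma_{56})\in\la e^{12346}\ra$, $d(e^{57}\wedge\gamma_{57})\in\la e^{12347}\ra$ and $d(e^{67}\wedge\gamma_{67})\in e^{567}\wedge\la\omega_2,\omega_3\ra$, so $d\star\f$ has no $e^{12345}$-component. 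On the other hand, the only term of $\theta\wedge\star\f$ containing $e^{12345}$ is $\lambda_5\,e^5\wedge\lambda\,e^{1234}=\lambda\lambda_5\,e^{12345}$ (all other terms are supported on the $5$-forms $e^{12346}$, $e^{12347}$, and $e^{567}\wedge(\L^2\frm_4^*)^\H$). Equating coefficients yields $\lambda\lambda_5=0$, hence $\lambda_5=0$ and $d\theta=0$. The argument is uniform in the two cases: the value of $c$ is irrelevant, and the extra closed generator $\omega_0$ occurring for $V^{4,2}\times T^2$ does not contribute. The main obstacle I anticipate is the verification that $\lambda\neq0$, i.e.\ that $\lambda=0$ is incompatible with definiteness; the rest is routine bookkeeping with the $\Ad(\H)$-decompositions of $\L^k\frm^*$ and the structure equations.
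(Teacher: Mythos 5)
Your proposal is correct, and it reaches the conclusion by a genuinely different route than the paper. The paper's proof is shorter: it applies $d$ to the defining equation $d\star\f=\theta\W\star\f$ to get $d\theta\W\star\f=0$, writes $\star\f$ in the form adapted to the orthogonal splitting $\frm=\frm_4\oplus\frm_3$, i.e.\ in terms of an orthonormal basis $(v^1,v^2,v^3)$ of $\frm_3^*$ and an $\SU(2)$-structure $(\g_1,\g_2,\g_3)$ on $\frm_4$, and then uses $d\theta\in\la\omega_1\ra\subset\Lambda^2\frm_4^*$ to deduce $d\theta\W\g_k=0$ for $k=1,2,3$, hence $d\theta=0$ (the non-degeneracy of the wedge pairing of $\omega_1$ against the self-dual triple is left implicit there). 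You instead never differentiate the equation: you expand the generic $\Ad(\H)$-invariant $4$-form, prove $\lambda\neq0$ by the same stability device as in Proposition \ref{prop:integrable-Sp(2)-SU(3)} (showing $B_{\star\f}(e^5,e^5)=0$ when $\lambda=0$, contradicting definiteness — this step, which you flagged as the main obstacle, is carried out correctly), and then compare the $e^{12345}$-coefficients of the two sides of $d\star\f=\theta\W\star\f$ using the structure equations \eqref{eq:str-diff-1}--\eqref{eq:str-diff-2}, obtaining $\lambda\lambda_5=0$ and hence $\lambda_5=0$. What each buys: the paper's argument is more conceptual and avoids any computation of $B_{\star\f}$, but it relies on the position of $\omega_1$ relative to the self-dual forms of the (unknown) invariant metric; your argument is more computational but completely explicit, stays within the invariant-coefficient bookkeeping used throughout Section \ref{sec:Reducible}, and as a by-product records the non-vanishing of the $\Lambda^4\frm_4^*$-component of $\star\f$, a fact in the spirit of Proposition \ref{prop:integrable-rho-metric}.
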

\begin{proof}
The condition $d\star \varphi=\theta \wedge \star \varphi$ yields $d\theta \wedge \star \varphi =0$. 
The $1$-form $\theta$ is $\Ad(\H)$-invariant, thus $\theta \in \frm_3^* = \langle e^5,e^6,e^7\rangle$.
Due to the orthogonal splitting $\frm= \frm_4 \oplus \frm_3$, we can write
\[
\star \varphi= \frac{1}{2}\g_1^2 + v^{23}\wedge \g_1 - v^{13} \wedge \g_2 - {v^{12}}\wedge \g_3, 
\]
where $(v^1,v^2,v^3)$ is an orthonormal basis of $\frm_3^*$ and $(\gamma_1,\gamma_2,\gamma_3)$ is an $\SU(2)$-structure on $\frm_4$. 
Since  $d\theta \in \la \omega_1 \ra \subset \Lambda^2 \frm_4^*$, the condition $d\theta \wedge \star \varphi =0$ yields $d\theta \wedge \gamma_k=0$, 
for $k=1,2,3$, which is equivalent to $d\theta=0$.
\end{proof}

We now prove a general result ensuring that $e^5$ and the Lee form $\theta$ are $g_\f$-orthogonal.

\begin{proposition} \label{prop:integrable-rho-metric}
Let $\rho\in(\L^4\frm^*)^\H$ be an $\Ad(\H)$-invariant $4$-form such that $d\rho= \theta \wedge \rho$, 
for some $\Ad(\H)$-invariant non-zero closed $1$-form $\theta$ on $\frm$. 
Then, the bilinear map
$B_\rho : \frm^* \times \frm^* \to (\L^7 \frm^*)^{\otimes 2}$ 
satisfies $B_\rho\left(e^k, \theta\right)= 0$, for $1 \leq k \leq 5$.
\end{proposition}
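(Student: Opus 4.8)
The plan is to carry out an explicit computation on the reductive complement $\frm = \frm_4\oplus\frm_3$, using $\Ad(\H)$-invariance to reduce to finitely many parameters. First note that an $\Ad(\H)$-invariant $1$-form lies in $\frm_3^* = \langle e^5,e^6,e^7\rangle$, and among these the closed ones are precisely the linear combinations of $e^6$ and $e^7$, since $de^6 = de^7 = 0$ while $de^5 = -\omega_1\neq0$ by \eqref{eq:str-diff-1} (resp.\ \eqref{eq:str-diff-2}); thus $\theta = p\,e^6 + q\,e^7$. Next, $\Ad(\H)$-invariance forces $\rho = \lambda\,e^{1234} + \sum_i \mu_i\wedge\omega_i$ with $\mu_i = a_i\,e^{56} + b_i\,e^{57} + c_i\,e^{67}$, the index $i$ running over $\{1,2,3\}$ for $S^5\times T^2$ and over $\{0,1,2,3\}$ for $V^{4,2}\times T^2$. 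I would then expand both sides of $d\rho = \theta\wedge\rho$ by means of \eqref{eq:str-diff-1} (resp.\ \eqref{eq:str-diff-2}); the decisive part is the comparison of the $e^{567}\wedge\omega_i$-components, which gives $-p\,b_i + q\,a_i = r_i$, where $r_i$ denotes the $e^{567}\wedge\omega_i$-coefficient of $d\rho$. Since $d\omega_0 = d\omega_1 = 0$ one gets $r_0 = r_1 = 0$, while $d\omega_2 = -c\,\omega_3\wedge e^5$ and $d\omega_3 = c\,\omega_2\wedge e^5$ give $r_2 = c\,c_3$ and $r_3 = -c\,c_2$, with $c\in\{4,6\}$.

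Next I would pass to multivectors. Writing $\widehat\rho\in\L^3\frm$ for the $3$-vector with $\iota_{\widehat\rho}\Omega = \rho$, $\Omega = e^{1234567}$, and using that the $\Ad(\H)$-invariant $3$-vectors are $\L^3\frm_3\oplus\big(\frm_3\otimes(\L^2\frm_4)^\H\big)$ --- there being no invariants in $\L^3\frm_4$ or in $\L^2\frm_3\otimes\frm_4$ --- one finds $\widehat\rho = \lambda_0\,e_{567} + \sum_i v_i\wedge\widehat\omega_i$ with $\lambda_0$ a nonzero multiple of $\lambda$, $\widehat\omega_i\in\L^2\frm_4$ the bivector dual to $\omega_i$, and $v_i\in\frm_3$ whose components along $e_5, e_6, e_7$ are nonzero multiples of $c_i, b_i, a_i$ (the multipliers built only from the orientation of $\frm$ and the $\pm1$ eigenvalue $\ep_i$ of $\omega_i$ under the Hodge star of $\frm_4$ relative to $e^{1234}$). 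Contracting, $\iota_\theta\widehat\rho = \lambda_0(q\,e_{56} - p\,e_{57}) + \sum_i \kappa_i\,\widehat\omega_i$ with $\kappa_i$ a nonzero multiple of $-p\,b_i + q\,a_i = r_i$; for $k\le 4$ one has $\iota_{e^k}\widehat\rho = -\sum_i v_i\wedge\iota_{e^k}\widehat\omega_i$, a sum of terms $e_\ell\wedge(\text{vector of }\frm_4)$ with $\ell\in\{5,6,7\}$; and $\iota_{e^5}\widehat\rho = \lambda_0\,e_{67} + \sum_i(\iota_{e^5}v_i)\,\widehat\omega_i$.

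Finally I would evaluate $B_\rho(e^k,\theta)$, which vanishes iff the $7$-vector $\iota_{e^k}\widehat\rho\wedge\iota_\theta\widehat\rho\wedge\widehat\rho$ does, by bookkeeping how many legs come from $\frm_3$ (at most three) and from $\frm_4$ (at most four). For $1\le k\le 4$ the first factor always contributes exactly one $\frm_3$-leg and one $\frm_4$-leg; each of the remaining two factors contributes either two legs of a single type or (for $\widehat\rho$) three $\frm_3$-legs, or one $\frm_3$- and two $\frm_4$-legs; a quick check of all four combinations shows that in each of them either the $\frm_3$-count or the $\frm_4$-count is exceeded, so the triple wedge vanishes identically and $B_\rho(e^k,\theta) = 0$. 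For $k = 5$, using $\omega_i\wedge\omega_j = 0$ for $i\neq j$ and $\omega_i\wedge\omega_i = 2\ep_i\,e^{1234}$, the only terms that survive in the triple wedge are those using the same $\widehat\omega_i$ from two of the three factors; collecting them gives $\iota_{e^5}\widehat\rho\wedge\iota_\theta\widehat\rho\wedge\widehat\rho$ equal to a fixed nonzero scalar times $\lambda_0\big(\sum_i\ep_i\,\kappa_i\,c_i\big)\,e_{1234567}$. Substituting $\kappa_i\propto r_i$: the $i = 0$ and $i = 1$ contributions drop out because $r_0 = r_1 = 0$, and since $\ep_2 = \ep_3$ the $i = 2$ and $i = 3$ contributions are proportional to $(c\,c_3)\,c_2$ and $(-c\,c_2)\,c_3$, which cancel; hence the sum is zero and $B_\rho(e^5,\theta) = 0$.

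I expect the main obstacle to be the bare-hands multilinear computation of $\iota_{e^5}\widehat\rho\wedge\iota_\theta\widehat\rho\wedge\widehat\rho$ --- and, inside it, tracking the orientation and Hodge-star signs precisely enough that the cancellation of the $i = 2$ and $i = 3$ terms is transparent. Once the conventions are fixed this is mechanical, and the two reductive models $S^5\times T^2$ and $V^{4,2}\times T^2$ run in parallel: the extra module $\langle\omega_0\rangle$ and the different value of $c$ occurring for $V^{4,2}\times T^2$ are harmless, because $d\omega_0 = 0$ places $\omega_0$ on exactly the same footing as $\omega_1$.
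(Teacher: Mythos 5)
Your proposal is correct and takes essentially the same route as the paper's proof: the paper also parametrizes the invariant $4$-form, passes to the dual $3$-vector $\widehat{\rho}$ with $\iota_{\widehat{\rho}}\Omega=\rho$, inserts the constraints extracted from $d\rho=\theta\wedge\rho$, and gets $B_\rho(e^5,\theta)=0$ from precisely your cancellation between the $\o_2$- and $\o_3$-terms (there it appears as $\g_1\wedge\g_3=c\,\l_3^3\l_3^2\,(\widehat{\o}_2^2-\widehat{\o}_3^2)=0$, i.e.\ the fact that $\o_2$ and $\o_3$ have the same duality type). The only cosmetic differences are that the paper works in a coframe of $\frm_3^*$ adapted to $\theta$ and $e^5$, and handles $B_\rho(e^k,\theta)=0$ for $k\leq 4$ by inequivalence of the $\H$-modules $\frm_3$ and $\frm_4$ rather than by your leg-counting argument.
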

\begin{proof}
Since $\theta \in  \la e^6, e^7 \ra\subset \frm^*_3$ and the $\Ad(\H)$-modules $\frm_4$ and $\frm_3$ are inequivalent, 
we have $B_{\rho}(e^k,\theta)=0$ for $1 \leq k \leq 4$. 
We now prove that $B_{\rho}(e^5,\theta)=0$. 
For this purpose, we let $\eta^1\coloneqq \theta$, $\eta^3\coloneqq e^5$, and we choose $\eta^2 \in \la e^6,e^7 \ra$ linearly independent to $\eta^1$. 
We can write
\[
\rho= \l_{1234}\omega_1^2 + \eta^{23} \wedge \b_1 + \eta^{13} \wedge \b_2 + \eta^{12}\wedge \b_3,
\] with
$\b_j= \sum_{k=0}^3 \l_j^k \o_k$, where $\l_j^k$, $\l_{1234} \in \R$. Using equations \eqref{eq:str-diff-1} and \eqref{eq:str-diff-2} we get 
\[
\begin{split}
d\rho &= (\l_{1}^1 \eta^2 + \l_2^1 \eta^1)\wedge \omega_1^2 
+ \eta^{123} \wedge (-c\l_3^2 \omega_3 + c\l_3^3 \omega_2).
\end{split}	
\]
In addition,
\[
\eta^1 \wedge \rho= \l_{1234}\eta^1 \wedge \omega_1^2 + \eta^{123} \wedge \left( \l_1^0\, \o_0 +  \l_1^1\,\o_1 +  \l_1^2\,\o_2 + \l_1^3\, \o_3 \right).
\]

Since $\theta \neq 0$, the condition $d\rho= \eta^1 \wedge \rho$ is then equivalent to the following system of equations 
\[
\l_2^1=\l_{4567}, \quad 
-c\l_3^2=\l_1^3, \quad
c\l_3^3=\l_1^2, \quad
\l_1^0=\l_1^1=0.
\]

We now compute the bilinear map $B_{\rho}$ as explained in Section \ref{SecDefForm7}. 
Consider the dual basis $(\eta_1,\eta_2,\eta_3)$  of $(\eta^1,\eta^2,\eta^3)$, and define $\widehat{\o}_1 \coloneqq e_{12}- e_{34}$, 
$\widehat{\o}_2 \coloneqq e_{13}+e_{24}$, $\widehat{\o}_3\coloneqq e_{14}+ e_{23}\in\Lambda^2\frm$.  
Moreover, let $\widehat{\o}_0 \coloneqq -\left(e_{12}+ e_{34}\right) $ if $\G=\SU(2)^2 \times \U(1)^2$ 
and $\widehat{\o}_0 \coloneqq 0$ if $\G=\SU(3)\times \U(1)^2$. 
Define $\widehat{\rho}\in\Lambda^3\frm$ via the identity 
$\iota_{\widehat{\rho}}\Omega = \rho$, where $\Omega=-e^{1234}\wedge \eta^{123}$. 
Then 
\[
\widehat{\rho}=  \l_2^1 \eta_{123} + \eta_1 \wedge \g_1 - \eta_2 \wedge \g_2 + \eta_3 \wedge \g_3,
\] 
where  $\g_1= c \l_3^3 \widehat{\o}_2 - c \l_3^2 \widehat{\o}_3$
and $\g_j= \sum_{k=0}^3{\l_j^k \widehat{\o}_k}$ for $j=2,3$. Observe that
\[
\g_1 \wedge \g_3= c\l_3^3\l_3^2 (\widehat{\o}_2^2 - \widehat{\o}_3^2) = 0.
\]
We use this last equality twice to show that $B_\rho\left(\eta^1,\eta^3\right)=0$:
\[
 \iota_{\eta^1} \widehat{\rho} \wedge \iota_{\eta^3} \widehat{\rho}\wedge  \widehat{\rho}
 = \l_2^1 \left( \eta_{23} \wedge \g_3 + \eta_{12} \wedge \g_1 \right) \wedge \widehat{\rho} = 2 \l_2^1 \eta_{123}\g_1 \wedge \g_3=0.
\]
\end{proof}

Denote by $\G'$ the semisimple part of $\G$, and consider the six-dimensional manifold $N\coloneqq{\G}'/\H \times S^1$, 
that is homogeneous under the transitive action of $\G'\times\U(1)$.   
Here $\G'/\H$ is diffeomorphic to $S^5$ when ${\G}'=\SU(3)$ and to $V^{4,2}$ when ${\G}'=\SU(2)^2$. 
We now relate the existence of an invariant strong $\G_2T$-structure on $\G/\H$ to the existence of an invariant $\SU(3)$-structure on $N$ 
satisfying equations \eqref{G2TNS} and \eqref{G2TNSdT}.

\begin{lemma} \label{lem:reduction-su(3)}
If there exists an invariant strong $\G_2T$-structure on $\G/\H$, 
then there exists an invariant $\SU(3)$-structure on $N$ satisfying equations \eqref{G2TNS} and \eqref{G2TNSdT} with $w_2^+=0$.
\end{lemma}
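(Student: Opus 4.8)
The plan is to show that, under the natural identification of $\G/\H$ with a product $N\times S^1$, the $\G_2$-structure $\varphi$ takes the form $\varphi=\omega\wedge\eta+\psi_+$ for an invariant $\SU(3)$-structure $(\omega,\Psi)$ on $N$ and an invariant closed $1$-form $\eta$ with Lee form $\theta=\lambda\eta$, and then to read off the equations from Proposition \ref{prop:product-closedT}. First, by Lemma \ref{lem:lee-closed} the Lee form $\theta$ of $\varphi$ is closed; being $\Ad(\H)$-invariant and since $de^5=-\omega_1\neq 0$ while $de^6=de^7=0$, it lies in $\la e^6,e^7\ra$. Applying Proposition \ref{prop:integrable-rho-metric} to $\rho=\star\varphi$, which satisfies $d\rho=\theta\wedge\rho$, gives $B_\rho(e^k,\theta)=0$ for $1\le k\le 5$, so the Lee vector $\theta^\sharp$ lies in $\la e_6,e_7\ra$. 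Thus both $\theta$ and $\theta^\sharp$ are carried by the central abelian summand $\la e_6,e_7\ra$ of $\frg=\Lie(\G')\oplus\la e_6,e_7\ra$.

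Set $\eta:=\theta/|\theta|_{g_\varphi^{-1}}$ if $\theta\neq 0$ and $\eta:=e^6/|e^6|_{g_\varphi^{-1}}$ if $\theta=0$; this is a $g_\varphi$-unit, closed, $\Ad(\H)$-invariant $1$-form in $\la e^6,e^7\ra$ with $\theta=\lambda\eta$ for some $\lambda\geq 0$. Its kernel is $\ker\eta=\frm'\oplus\la u\ra$, where $\frm'=\frm_4\oplus\la e_5\ra$ is the reductive complement for $\G'/\H$ and $u\in\la e_6,e_7\ra$ is a non-zero vector annihilated by $\eta$, hence central in $\frg$. Put $v:=\eta^\sharp$, a $g_\varphi$-unit $\Ad(\H)$-invariant vector with $v^\flat=\eta$ and $v^\perp=\ker\eta=\frm'\oplus\la u\ra$. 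The standard $\G_2\to\SU(3)$ reduction determined by the unit covector $\eta$ yields $\Ad(\H)$-invariant forms $\omega:=\iota_v\varphi$ and $\psi_+:=\varphi-\eta\wedge\omega$ in $\Lambda^\bullet(v^\perp)^*$, and, with $\psi_-\in\Lambda^3(v^\perp)^*$ characterized by $\star\varphi=\tfrac12\omega^2+\psi_-\wedge\eta$, the pair $(\omega,\Psi:=\psi_++i\psi_-)$ is an $\SU(3)$-structure on $v^\perp$ with metric $g_\varphi|_{v^\perp}$ and $\varphi=\omega\wedge\eta+\psi_+$. Since $v$ is a fundamental vector field of the $\G$-action, $\mathcal{L}_v$ annihilates $\varphi$ and hence $\omega,\psi_\pm$, so their exterior derivatives again lie in $\Lambda^\bullet(v^\perp)^*$.

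Now identify $v^\perp=\frm'\oplus\la u\ra$ with the tangent space $\frm'\oplus\R$ of $N:=\G'/\H\times S^1$ via the $\Ad(\H)$-equivariant isomorphism that is the identity on $\frm'$ and maps $u$ to a generator of $\R$. Because $\frm'$ is the reductive complement for $\G'/\H$ and $u$ is central in $\frg$, this identification intertwines the Chevalley--Eilenberg differentials; hence $(\omega,\Psi)$ transports to a $(\G'\times\U(1))$-invariant $\SU(3)$-structure on $N$ with the same intrinsic torsion forms, and under the corresponding identification $\G/\H\cong N\times S^1$ one has that $\varphi=\omega\wedge\eta+\psi_+$ is the $\G_2$-structure on $N\times S^1$ associated with $(\omega,\Psi)$, with Lee form $\lambda\eta$ and closed torsion $3$-form (the latter because $\varphi$ is strong). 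By (\ref{prodclosed1}) of Proposition \ref{prop:product-closedT} the conditions \eqref{G2TNS} and \eqref{G2TNSdT} hold, and since $N$ is compact the same proposition forces $w_2^+=0$, which is what we wanted.

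The only delicate point is making the reduction precise: one must locate the Lee covector and the Lee vector inside the central torus directions---this is where the closedness of $\theta$ (Lemma \ref{lem:lee-closed}) and Proposition \ref{prop:integrable-rho-metric} are used---and then check that cutting $\varphi$ transversally to the Lee direction produces a genuinely \emph{invariant} $\SU(3)$-structure on the fixed homogeneous space $N=\G'/\H\times S^1$, which comes down to the reductive splitting of $\Lie(\G')$ and the centrality of the torus factor. Everything else is a direct appeal to Proposition \ref{prop:product-closedT}.
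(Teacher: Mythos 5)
Your treatment of the case $\theta\neq 0$ is essentially the paper's argument: Lemma \ref{lem:lee-closed} puts $\theta$ in $\la e^6,e^7\ra$, Proposition \ref{prop:integrable-rho-metric} puts $\theta^\sharp$ in the central directions $\la e_6,e_7\ra$, and then splitting off the circle in the Lee direction and quoting Proposition \ref{prop:product-closedT} (with compactness of $N$ giving $w_2^+=0$) is exactly what the paper does. The gap is in the coclosed case. There you set $\eta:=e^6/|e^6|$ and $v:=\eta^\sharp$, and then assert that $v$ is (the value at the identity coset of) a fundamental vector field so that $\mathcal{L}_v\varphi=0$, from which you deduce $d\omega,\,d\psi_\pm\in\Lambda^\bullet(v^\perp)^*$. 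But nothing guarantees that $(e^6)^\sharp$ lies in $\la e_6,e_7\ra$: $\Ad(\H)$-invariance of $g_\varphi$ only forces $\frm_4\perp\frm_3$, the inner product on $\frm_3=\la e_5,e_6,e_7\ra$ is arbitrary, and Proposition \ref{prop:integrable-rho-metric} is unavailable precisely because its proof assumes $\theta\neq0$. If $(e^6)^\sharp$ has a nonzero $e_5$-component, then $v$ is not central, the contraction $\iota_{v^*}\varphi$ is not an invariant form, and on the complex of invariant forms one gets $\iota_v d\alpha(x_1,\dots)=\pm\sum_j\alpha([v,x_j]_\frm,\dots)$ even when $\iota_v\alpha=0$; since $[e_5,\frm_4]_\frm\neq0$ (this is what makes $d\omega_2,d\omega_3\neq0$), the key claim that $d\omega$ and $d\psi_\pm$ have no $\eta$-component — and with it the identification with a product $N\times S^1$ needed to invoke Proposition \ref{prop:product-closedT} — is unjustified. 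Note also that the coclosed case cannot be discarded a priori here, since $b_3\left(V^{4,2}\times T^2\right)>0$, so Proposition \ref{lem:co-closed} does not rule out invariant coclosed strong $\G_2T$-structures on that space.

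The repair is the one the paper uses, and it is the mirror image of your choice: in the coclosed case pick first a unit \emph{vector} $\eta_1\in\la e_6,e_7\ra$ that is $g_\varphi$-orthogonal to $e_5$ (possible since $\la e_6,e_7\ra$ is $2$-dimensional and $e_5^\perp$ has codimension one); then its metric dual $1$-form $\eta_1^\sharp$ automatically lies in $\la e^6,e^7\ra$, hence is closed, while the circle direction $\eta_1$ is central, so your Lie-derivative argument and the product identification go through with $\lambda=0$. With that modification your proof coincides with the paper's.
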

\begin{proof}
Assume there exists an $\Ad(\H)$-invariant strong $\G_2T$-structure $\f$ on $\frm$, and let $g_\f$ be the induced metric.

We first consider the case when $\varphi$ is coclosed, namely $\theta=0$. 
Let $\eta_1 \in \la e_6,e_7\ra$ be a unit-length vector perpendicular to $e_5$. 
Its metric dual $\eta_1^\sharp = g_\f(\eta_1,\cdot)$ lies in $\la e^6, e^7 \ra$, and we can consider the six-dimensional $\Ad(\H)$-invariant 
subspace $\frn \coloneqq \ker(\eta_1^\sharp)= \frm' \oplus \la \eta_2 \ra$ of $\frm$, 
for some unit-length vector $\eta_2 \in \la e_6,e_7\ra$. 
Then, $\varphi= \eta_1^\sharp \wedge \omega + \psip$, for some $\Ad(\H)$-invariant invariant $\SU(3)$-structure $(\omega, \Psi)$ on $\frn$, 
and the decomposition $\frm = \frn \oplus \la \eta_1 \ra$ is $g_\f$-orthogonal. 
If we let $T^2 \coloneqq \R \la \eta_1, \eta_2 \ra/\mathbb{Z} \la \eta_1, \eta_2 \ra$, then  
the strong $\G_2T$-structure on $\frm$ induces an invariant strong $\G_2T$-structure on the homogeneous space
$
{\G}'/\H \times T^2, 
$
whose metric makes $N= {\G}'/\H \times \R \la \eta_2 \ra/ \Z \la \eta_2 \ra$ and $S^1= \R \la \eta_1 \ra/\Z\la \eta_1 \ra$ perpendicular. 
Proposition \ref{prop:product-closedT} implies then that the pair $(\omega, \Psi)$ satisfies equations \eqref{G2TNS} and \eqref{G2TNSdT} 
with $w_2^+=0$ and $\lambda=0$.

We now assume that $\theta \neq 0$. Lemma \ref{lem:lee-closed} guarantees that $\theta \in \la e^6,e^7 \ra$.  
We take a unit-length $1$-form $\eta^1$ with $\theta= \lambda \eta^1$, $\lambda>0$, and  
a unit-length $1$-form $\eta^2= \lambda_6 e^6 + \lambda_7 e^7$ perpendicular to $\eta^1$.
Then, Proposition \ref{prop:integrable-rho-metric} ensures that $\frn^*= \la e^1, \dots , e^5, \eta^2 \ra \subset \frm^*$  
is perpendicular to $\eta^1$, and we can write
$
\varphi= \eta^1 \wedge \omega + \psip,
$
where the pair $(\omega, \Psi)$ determines an $\Ad(\H)$-invariant $\SU(3)$ structure on the $\Ad(\H)$-invariant subspace of $\frm$  
\[
(\frn^*)^* = \frm' \oplus \la \lambda_6 e_6 + \lambda_7 e_7 \ra. 
\]
This pair induces an invariant $\SU(3)$-structure on ${\G}'/\H \times S^1$, with $S^1= \R \la \eta_2 \ra/ \Z \la \eta_2 \ra$. 
Taking into account that $d\eta_1=0$, $d\omega \in \Lambda^3 \frn^*$, $d\psi_{\pm} \in \Lambda^4 \frn^*$, 
we can argue as in the proof of Proposition \ref{prop:product-closedT} and conclude that 
the SU(3)-structure $(\omega, \Psi)$ satisfies equations \eqref{G2TNS} and \eqref{G2TNSdT} with $w_2^+=0$. 
\end{proof}

Since $b_3(S^5\times S^1)=0$, Corollary \ref{Corb3N} allows us to conclude the discussion for the case $6$ of Table \ref{TabRed}.  
\begin{corollary}
There are no invariant strong $\G_2T$-structures on $\G/\H$ when $\G=\SU(3)\times \U(1)^2$ and $\H=\SU(2)\subset\SU(3)$.
\end{corollary}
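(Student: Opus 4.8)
The plan is to reach a contradiction by pushing any hypothetical invariant strong $\G_2T$-structure down to a six-dimensional $\SU(3)$-structure and then invoking the Betti number obstruction of Corollary~\ref{Corb3N}. Concretely, suppose that $\G/\H\cong S^5\times T^2$ admitted an invariant strong $\G_2T$-structure. Applying Lemma~\ref{lem:reduction-su(3)}, one obtains an invariant $\SU(3)$-structure $(\omega,\Psi)$ on the six-manifold $N=\G'/\H\times S^1$, where $\G'=\SU(3)$ is the semisimple part of $\G$, so that $\G'/\H=\SU(3)/\SU(2)=S^5$ and $N=S^5\times S^1$; moreover $(\omega,\Psi)$ satisfies the relations \eqref{G2TNS} and \eqref{G2TNSdT} with $w_2^+=0$.

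Next I would feed this $\SU(3)$-structure into part~(\ref{prodclosed1}) of Proposition~\ref{prop:product-closedT}. Since \eqref{G2TNS} holds, the $\G_2$-structure $\f=\omega\wedge\eta+\psi_+$ on $N\times S^1$ is a $\G_2T$-structure whose Lee form is $\theta=\lambda\eta$ with $\lambda=2w_1^-$ constant; and since \eqref{G2TNSdT} holds, its torsion $3$-form is closed. Hence $\f$ is a strong $\G_2T$-structure on the compact six-manifold-times-circle $N\times S^1$ of exactly the shape required by the hypotheses of Corollary~\ref{Corb3N}. That corollary then forces $b_3(N)>0$.

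This is the contradiction: by the K\"unneth formula $b_3(S^5\times S^1)=b_3(S^5)\,b_0(S^1)+b_2(S^5)\,b_1(S^1)=0$. Therefore no invariant strong $\G_2T$-structure can exist on $\G/\H$. I do not expect a serious obstacle in this argument, since the substantial work is already packaged into Lemma~\ref{lem:reduction-su(3)} (which in turn relies on Lemma~\ref{lem:lee-closed} and Proposition~\ref{prop:integrable-rho-metric}) and into Corollary~\ref{Corb3N}. The one point deserving a line of care is to confirm that the $\SU(3)$-structure produced by the Lemma really meets every hypothesis of Corollary~\ref{Corb3N} --- in particular that the induced $\G_2$-structure on $N\times S^1$ has Lee form proportional to the closed $1$-form $\eta$ on the $S^1$ factor --- which is precisely what Proposition~\ref{prop:product-closedT}(\ref{prodclosed1}) guarantees once the normalizations \eqref{G2TNS} are in force.
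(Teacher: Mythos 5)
Your argument is correct and is essentially the paper's own proof: the paper likewise combines Lemma~\ref{lem:reduction-su(3)} with Corollary~\ref{Corb3N} and the vanishing of $b_3(S^5\times S^1)$ to rule out this case. Your extra step through Proposition~\ref{prop:product-closedT}(\ref{prodclosed1}) just makes explicit the verification (implicit in the paper) that the conditions \eqref{G2TNS} and \eqref{G2TNSdT} produced by the lemma place the induced $\G_2$-structure on $N\times S^1$ within the hypotheses of Corollary~\ref{Corb3N}.
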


Since $b_3(V^{4,2})=1$, we cannot rule out the case $7$ of Table \ref{TabRed} using the same argument as before. 
However, by Lemma \ref{lem:reduction-su(3)}, this case can be discarded by proving that there are no invariant $\SU(3)$-structures 
satisfying the equations \eqref{G2TNS} and \eqref{G2TNSdT} with $w_2^+=0$ on the homogeneous space $N = \G'\times\U(1)/ \H$, 
where $\G'=\SU(2)^2$ and $\H\cong\U(1)$ is diagonally embedded into $\G'$ 
as described in \eqref{U1diag}. 
Again, this boils down to showing that there are no $\Ad(\H)$-invariant $\SU(3)$-structures on 
$\frn = \frm'\oplus\R$ satisfying the required conditions. 
We consider the basis $(e_1,\dots,e_5)$ of  $\frm' \subset \frsu(2)\oplus \frsu(2)$ previously defined, and a generator $e_6 \in \R$, 
and we keep on using the notations introduced before. 

Recall that the conditions \eqref{G2TNS} and $w_2^+=0$ imply that the $\Ad(\H)$-invariant $\SU(3)$-structure $(\omega,\Psi)$ on $\frn$ satisfies 
\begin{equation}\label{RecapEqLast1}
d\omega	= -\frac{3}{2}w_1^-\, \psi_+ + \frac{3}{2}\,w_1^+ \psi_- + w_3,\quad
d\psi_+	= w_1^+ \omega^2,\quad
d\psi_-	=  w_1^-\, \omega^2, 
\end{equation}
and that \eqref{G2TNSdT} is equivalent to 
\begin{equation}\label{RecapEqLast2}
\delta_g w_3 =  \left( \left( w_1^+\right)^2 + \left( w_1^- \right)^2\right) \omega. 
\end{equation}

The generic $\Ad(\H)$-invariant 2-form $\frn$ can be written as follows
\begin{equation}\label{eq:omega}
\omega= a_0 \o_0 + a_1 \o_1 + a_2 \o_2 + a_3 \o_3 + a_4 e^{56},
\end{equation}
for some real constants $a_0,\ldots,a_4$. 
Moreover, in \eqref{eq:omega} we can additionally assume that $a_2=0$. 
Indeed, the one-dimensional torus $\U\coloneqq \exp(\R e_5)$ centralizes the isotropy $\H$, and for every $u=\exp(te_5)\in\U$ we can consider the
$\G'\times\U(1)$-equivariant diffeomorphism $\tau_u$ of $N$ given by $\tau_u(x\H) = xu\H$, where $x\in \G'\times\U(1)$. 
Then, under the usual identification between invariant forms on $\G'\times\U(1)/\H$ and $\Ad(\H)$-invariant forms on $\frn$, 
we have
\[
\tau_u^*\omega = a_0 \o_0 + a_1 \o_1 + a_2' \o_2 + a_3' \o_3 + a_4 e^{56},
\] 
where $a_2 ' = \cos(2t)\,a_2-\sin(2t)\,a_3$ and $a_3' = \sin(2t)\,a_2+\cos(2t)\,a_3$, and we obtain $a_2'=0$ for a suitable choice of $t$. 

 We have the following useful result.  
\begin{lemma}\label{prop:w2-vanish}
Let $\omega$ be an $\Ad(\H)$-invariant $2$-form on $\frn$ given by \eqref{eq:omega} with $a_2=0$. 
Then, $\omega$ cannot be symplectic. Moreover, if $\omega$ is non-degenerate, then it satisfies the condition $d\omega\W\omega=0$ 
if and only if $a_1=0$. 
\end{lemma}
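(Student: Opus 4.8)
The plan is to turn the statement into a short explicit computation with the structure equations \eqref{eq:str-diff-2} and the wedge table of the forms $\omega_0,\omega_1,\omega_2,\omega_3$. Since $a_2=0$, write $\omega = a_0\omega_0 + a_1\omega_1 + a_3\omega_3 + a_4\,e^{56}$. First I would differentiate: from \eqref{eq:str-diff-2} one has $d\omega_0=0$ and $\omega_1=-de^5$, hence $d\omega_1=0$, while $d(e^{56}) = de^5\wedge e^6 - e^5\wedge de^6 = -\omega_1\wedge e^6$ because $de^6=0$. Therefore
\[
d\omega \;=\; c\,a_3\,\omega_2\wedge e^5 \;-\; a_4\,\omega_1\wedge e^6,
\]
with $c=4\neq 0$ in the case under exam. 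The role of the normalization $a_2=0$ is precisely that the term $d\omega_2$, which would otherwise re-enter, plays no part.

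Next I would record the elementary wedge relations: the $\omega_i$ are pairwise wedge-orthogonal, $\omega_0^2 = 2\,e^{1234}$, $\omega_1^2=\omega_3^2=-2\,e^{1234}$, and $\omega_i\wedge e^{1234}=0$. A two-line computation of $\omega^2$ and then $\omega^3=\omega\wedge\omega^2$ gives
\[
\omega^3 \;=\; 6\,a_4\,(a_0^2-a_1^2-a_3^2)\,e^{123456},
\]
so $\omega$ is non-degenerate if and only if $a_4\neq 0$ and $a_0^2-a_1^2-a_3^2\neq 0$.

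For the first assertion I would argue by contradiction: if $\omega$ were symplectic then $d\omega=0$; but $\omega_2\wedge e^5$ and $\omega_1\wedge e^6$ are linear combinations of disjoint sets of basis monomials (one set containing $e^5$, the other $e^6$), hence linearly independent, so the displayed formula for $d\omega$ forces $a_3=a_4=0$, whence $\omega^3=0$, contradicting non-degeneracy. Thus a closed $\omega$ of the form \eqref{eq:omega} with $a_2=0$ is never non-degenerate.

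For the second assertion I would assume $\omega$ non-degenerate, so $a_4\neq 0$, and expand $d\omega\wedge\omega$ using the formula above. Every product containing $\omega_i\wedge\omega_j$ with $i\neq j$ in $\{0,1,2,3\}$ vanishes, and so does every product containing $e^5\wedge e^{56}$ or $e^6\wedge e^{56}$; the only surviving summand is $-a_4a_1\,\omega_1\wedge e^6\wedge\omega_1 = -a_4a_1\,\omega_1^2\wedge e^6 = 2\,a_1a_4\,e^{123456}$. Hence $d\omega\wedge\omega = 2\,a_1a_4\,e^{123456}$, which vanishes exactly when $a_1=0$ since $a_4\neq 0$. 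The only point demanding attention throughout is the bookkeeping of signs in $d(e^{56})$ and in $\omega_1^2$, together with tracking which monomials survive each wedge product; there is no conceptual obstacle once \eqref{eq:str-diff-2} is in hand.
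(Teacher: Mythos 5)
Your proof is correct and takes essentially the same route as the paper: compute $d\omega = 4a_3\,\omega_2\wedge e^5 - a_4\,\omega_1\wedge e^6$ from \eqref{eq:str-diff-2}, note that non-degeneracy ($\omega^3=6a_4(a_0^2-a_1^2-a_3^2)e^{123456}\neq0$) forces $a_4\neq0$ so $\omega$ cannot be closed, and then evaluate $d\omega\wedge\omega$. The only slip is in your last display: $d\omega\wedge\omega$ is a $5$-form, namely $2a_1a_4\,e^{12346}$ rather than $2a_1a_4\,e^{123456}$, which does not affect the conclusion.
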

\begin{proof}
We have 
\[
\omega^3 = 6\,a_4\left(a_0^2-a_1^2-a_3^2\right) e^{123456}. 
\]
Moreover, using \eqref{eq:str-diff-2}, we obtain
\[
d\omega = 4a_3\, \omega_2\W e^5 - a_4\, \omega_1\W e^6,
\]
whence we get 
\[
d\omega\W\omega= 2 a_1 a_4  e^{12346}. 
\]
Thus, if $\omega$ is non-degenerate, then $a_4\neq0$. In particular, $\omega$ cannot be closed  
and the condition $d\omega\W\omega=0$ is equivalent to $a_1=0$.  
\end{proof}

By the previous lemma and (\ref{prodclosed3}) of Proposition \ref{prop:product-closedT}, 
if $(\omega,\Psi)$ is an $\Ad(\H)$-invariant $\SU(3)$-structure on $\frn$ solving \eqref{RecapEqLast1}, 
then at least one of the intrinsic torsion forms $w_1^+$ and 
$w_1^-$ must be non-zero, as otherwise the $\SU(3)$-structure should be torsion-free. 
We can then argue as in the proof of Proposition \ref{prop:product-str} and conclude that if $\frn$ admits an $\Ad(\H)$-invariant $\SU(3)$-structure 
satisfying \eqref{RecapEqLast1} and \eqref{RecapEqLast2}, then it must admit an $\Ad(\H)$-invariant double half-flat $\SU(3)$-structure with 
$w_1^-\neq0$ solving
\[
\Delta_g \omega =  4(w_1^-)^2 \omega. 
\]
Recall that this last condition is equivalent to \eqref{RecapEqLast2} by Corollary \ref{corDHF}. 
Thus, to discard case $7$ of Table \ref{TabRed}, it is sufficient showing that $\frn$ does not admit any such structure. 

For our aim, it is convenient giving a parametrization of all $\Ad(\H)$-invariant double half-flat $\SU(3)$-structures on $\frn$ first. 
Recall from \cite{Hitchin} that a pair $(\omega,\psip)\in(\Lambda^2\frn^*)^\H\times(\Lambda^3\frn^*)^\H$ 
defines an $\SU(3)$-structure on $\frn$ if and only if the following hold 
\begin{enumerate}[(i)]
\item\label{ifirst} $\omega$ is non-degenerate, namely $\omega^3\neq0$;
\item $\psip$ is a {\em negative stable} 3-form, namely a stable 3-form whose associated quartic polynomial satisfies $\lambda(\psip)<0$. 
Here, $\lambda(\psip) \coloneqq \frac16\mathrm{tr}(K_{\psip}^2)$, 
where $K_{\psip}\in\End(\frn)$ is defined as follows. 
Let $A:\Lambda^5\frn^*\rightarrow \frn\otimes \Lambda^6\frn^*$ be the isomorphism induced by the wedge product, then 
$K_{\psip}(v)\otimes \omega^3 = A(\iota_v\psip\W\psip)$, for all $v\in\frn$. 
In particular, $K_{\psip}^2 = \lambda(\psip)\mathrm{Id}_\frn$, so that $(\omega,\psip)$ determines an almost complex structure 
\[
J:\frn\rightarrow\frn,\quad J = \frac{1}{\sqrt{-\lambda(\psip)}} \, K_{\psip};
\]
\item $\psip$ is primitive with respect to $\omega$, i.e., $\psip\W\omega=0$. 
This is equivalent to $\omega$ being of type $(1,1)$ with respect to $J$, namely $\omega(J\cdot,J\cdot)=\omega$;
\item the symmetric bilinear form $g \coloneqq \omega(\cdot,J\cdot)$ is positive definite;
\item\label{ilast} the normalization condition $\psip\W\psim = \tfrac23\omega^3$ holds, where $\psim\coloneqq J\psip$. 
\end{enumerate}
Using this characterization and the aid of the software Maple 2021 for the computations, we obtain the following.

\begin{proposition}\label{prop:double-half-flat-charact}
There is a $4$-parameter family of $\Ad(\H)$-invariant double half-flat $\SU(3)$-structures on $\frn$ that is defined by the pair 
\[
\omega= a_3\, \o_3 + a_4\, e^{56},\quad
\psip	= (b_0 \o_0  + b_2 \o_2) \wedge e^5 + (c_0 \o_0 + c_1 \o_1 )\wedge e^6,
\] 
where the real parameters $b_0,b_2,c_0,c_1$ satisfy the constraints
\[
b_0^2c_1^2 +b_2^2\left(c_0^2-c_1^2\right)<0,\quad c_1\left(c_0+c_1\right)>0, 
\]
and the components of $\o$ are given by 
\begin{equation}\label{a3a4}
a_3^3 = \varepsilon\, \frac{ \,c_1 \left(b_0^2-b_2^2\right)}{8\, b_2\left(c_0^2-c_1^2\right)}\,\sqrt{-\lambda(\psip)}, \qquad
a_4^3 = -2\,\varepsilon\, \frac{b_2^2\left(c_0^2-c_1^2\right)^2}{c_1^2\left(b_0^2-b_2^2 \right)^2}\,\sqrt{-\lambda(\psip)}, 
\end{equation}
with $\varepsilon\in\{\pm1\}$. 
\end{proposition}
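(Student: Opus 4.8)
The plan is to impose the characterization (i)--(v) of $\SU(3)$-structures on the generic $\Ad(\H)$-invariant pair $(\omega,\psip)$ on $\frn$, together with the three differential equations that encode double half-flatness, and to solve the resulting algebraic system. First I would record the invariant forms: $(\Lambda^2\frn^*)^\H$ is spanned by $\omega_0,\omega_1,\omega_2,\omega_3,e^{56}$, so $\omega$ is as in \eqref{eq:omega} and, by the torus normalisation discussed just above \eqref{eq:omega}, we may take $a_2=0$; and $(\Lambda^3\frn^*)^\H$ is spanned by $\omega_k\wedge e^5$ and $\omega_k\wedge e^6$ for $k=0,\dots,3$, so $\psip=\sum_{k=0}^{3}p_k\,\omega_k\wedge e^5+\sum_{k=0}^{3}q_k\,\omega_k\wedge e^6$ for real constants $p_k,q_k$. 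From the $\SU(3)$ structure equations one checks that, once $(\omega,\psip)$ already defines an $\SU(3)$-structure, it is double half-flat (i.e.\ $w_1^+=w_2^\pm=w_4=w_5=0$) if and only if $d\psip=0$, $d\omega\wedge\omega=0$ and $d\psim=w_1^-\,\omega^2$ for a scalar $w_1^-$ (which is then automatically constant).

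The two ``linear'' conditions dispose of most of the parameters. By Lemma~\ref{prop:w2-vanish}, a non-degenerate invariant $\omega$ with $a_2=0$ satisfies $d\omega\wedge\omega=0$ exactly when $a_1=0$, so $\omega=a_0\omega_0+a_3\omega_3+a_4 e^{56}$, and $\omega^3\neq 0$ forces $a_4\neq 0$ and $a_0^2\neq a_3^2$. Using \eqref{eq:str-diff-2} and $de^6=0$, a direct computation gives $d(\omega_0\wedge e^5)=d(\omega_2\wedge e^5)=d(\omega_3\wedge e^5)=d(\omega_0\wedge e^6)=d(\omega_1\wedge e^6)=0$, while $d(\omega_1\wedge e^5)=2\,e^{1234}$, $d(\omega_2\wedge e^6)=-4\,\omega_3\wedge e^{56}$ and $d(\omega_3\wedge e^6)=4\,\omega_2\wedge e^{56}$; hence $d\psip=2p_1\,e^{1234}-4q_2\,\omega_3\wedge e^{56}+4q_3\,\omega_2\wedge e^{56}$, and $d\psip=0$ forces $p_1=q_2=q_3=0$, leaving $\psip=p_0\,\omega_0\wedge e^5+p_2\,\omega_2\wedge e^5+p_3\,\omega_3\wedge e^5+q_0\,\omega_0\wedge e^6+q_1\,\omega_1\wedge e^6$.

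What remains is the nonlinear part, which I would handle with Maple. One computes the Hitchin endomorphism $K_{\psip}$, hence $\lambda(\psip)=\tfrac16\tr(K_{\psip}^2)$ and $\psim=J\psip$ with $J=\tfrac{1}{\sqrt{-\lambda(\psip)}}K_{\psip}$, as an explicit invariant $3$-form in $p_0,p_2,p_3,q_0,q_1$, and then imposes: primitivity $\psip\wedge\omega=0$, which gives $a_0p_0=a_3p_3$ and $a_0q_0=0$; positivity of $g=\omega(\cdot,J\cdot)$ together with $\lambda(\psip)<0$; the normalisation $\psip\wedge\psim=\tfrac23\omega^3$; and $d\psim=w_1^-\omega^2$. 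Since $d\psim$ is again the differential of an invariant $3$-form it has no $\omega_0\wedge e^{56}$ component, whereas $w_1^-\omega^2$ contains $2w_1^-a_0a_4\,\omega_0\wedge e^{56}$ with $a_4\neq0$; this forces $a_0=0$ (for $w_1^-\neq0$ directly, and the normalisation excludes $a_0\neq0$ when $w_1^-=0$), and then primitivity gives $p_3=0$. We are thus left with exactly the ansatz of the statement, with $b_0=p_0$, $b_2=p_2$, $c_0=q_0$, $c_1=q_1$; the surviving normalisation equation expresses $a_3^3$ and $a_4^3$ through these parameters and $\sqrt{-\lambda(\psip)}$ as in \eqref{a3a4}, the sign $\varepsilon$ recording the two compatible orientations, while positivity of $g$ and $\lambda(\psip)<0$ translate into the two displayed inequalities.

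The genuine obstacle is this last step: writing $K_{\psip}$, $\lambda(\psip)$ and $\psim$ in closed form in the five surviving parameters and then solving the nonlinear system from the normalisation and positivity. These involve cube roots of the parameters and a careful sign analysis, so computer algebra is essentially unavoidable, and one still has to check by hand that the region cut out by $b_0^2c_1^2+b_2^2(c_0^2-c_1^2)<0$ and $c_1(c_0+c_1)>0$ is non-empty and that on it $\omega$ is non-degenerate and $g$ positive definite, so that the listed pairs are genuine $\SU(3)$-structures.
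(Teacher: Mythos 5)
Your proposal is correct and follows essentially the same route as the paper: the same generic $\Ad(\H)$-invariant ansatz for $(\omega,\psi_+)$, the same linear reductions ($a_1=0$ via Lemma \ref{prop:w2-vanish} and $p_1=q_2=q_3=0$ from $d\psi_+=0$), and the same Hitchin machinery (computing $K_{\psi_+}$, $\lambda(\psi_+)$, $\psi_-=J\psi_+$) with computer algebra for the remaining nonlinear system, exactly as in the paper's proof. The only cosmetic differences are that the paper eliminates $b_3$ from the $\omega_2\wedge e^{56}$ component of $d\psi_-=w_1^-\,\omega^2$ (checking $\omega\wedge\psi_+=0$ only afterwards) rather than via primitivity, and that the formulas \eqref{a3a4} actually come from the normalisation combined with the two surviving components of $d\psi_-=w_1^-\,\omega^2$ (which fix $w_1^-$ and the ratio $a_3/a_4$), not from the normalisation equation alone, though all the needed equations are indeed in the system you set up.
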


\begin{proof}
Let us consider a generic $\Ad(\H)$-invariant $2$-form on $\frn$
\[
\omega= a_0 \o_0 + a_1 \o_1 + a_3 \o_3 + a_4 e^{56},
\]
and a generic $\Ad(\H)$-invariant $3$-form 
\[
\psip=  \left( b_0 \o_0 + b_1 \o_1 + b_2 \o_2 + b_3 \o_3 \right)\wedge e^5 +  \left( c_0 \o_0 + c_1 \o_1 + c_2 \o_2 + c_3 \o_3 \right)\wedge e^6, 
\]
where $a_k,b_k,c_k$ are real parameters. 
We discuss for which values of these parameters the pair $(\o,\psip)$ defines a double half-flat $\SU(3)$-structure on $\frn$. 
This happens if and only if $(\o,\psip)$ satisfies the conditions (\ref{ifirst})-(\ref{ilast}) recalled before the statement of this proposition and 
\[
d\o\W\o=0,\quad d\psip=0,\quad d\psim = w_1^-\,\o^2.
\]

By Lemma \ref{prop:w2-vanish}, we know that $\omega$ is non-degenerate and satisfies the condition $d\omega\W\omega=0$ if 
and only if 
\[
\frac{\omega^3}{6} = a_4\left(a_0^2-a_3^2\right) e^{123456} \neq0, \qquad a_1=0.  
\]
We denote by $\varepsilon\in\{\pm1\}$ the sign of $a_4\left(a_0^2-a_3^2\right)$, 
so that $\omega$ induces the same orientation as $\varepsilon\,e^{123456}$.

Using \eqref{eq:str-diff-2}, we obtain 
\[
d\psip = -b_1\,\o_1^2 -4 c_2\, \omega_3 \wedge e^{56} + 4c_3\, \omega_2 \wedge e^{56},
\]
whence it follows that $\psip$ is closed if and only if $b_1=0=c_2=c_3$. 

Now, the endomorphism $K_{\psip}\in\End(\frn)$ is represented by the following matrix with respect to the basis $(e_1,\ldots,e_6)$
\[
2\varepsilon
\left(
\begin{array}{cccccc}
 b_{0} c_{1} & 0 &  b_{3} \left(c_{1}-c_{0}\right) &  b_{2} (c_{0}-  c_{1}) & 0 & 0 
\\
 0 &  b_{0} c_{1} & b_{2} \left(c_{1}-c_{0}\right) &  b_{3} (c_{1}- c_{0}) & 0 & 0 
\\
 -b_{3}  \left(c_{0}+c_{1}\right) & - b_{2} (c_{0}+ c_{1}) & - b_{0} c_{1} & 0 & 0 & 0 
\\
 b_{2}  \left(c_{0}+c_{1}\right) & - b_{3} (c_{0} + c_{1}) & 0 & - b_{0} c_{1} & 0 & 0 
\\
 0 & 0 & 0 & 0 & - b_{0} c_{0} &   c_{1}^2-c_{0}^2 
\\
 0 & 0 & 0 & 0 &  b_{0}^{2}- b_{2}^{2}- b_{3}^{2} &  b_{0} c_{0} 
\end{array}
\right).
\]
Using this expression, we compute the quartic polynomial $\lambda(\psip)=\tfrac16\mathrm{tr}(K_{\psip}^2)$, 
and we see that $\psip$ is negative stable if and only if 
\[
\lambda(\psip) = 4\left(b_0^2c_1^2 + \left(b_2^2+b_3^2\right)\left(c_0^2-c_1^2\right)\right) <0. 
\]
This implies in particular that $c_0^2-c_1^2 <0$ and thus $c_1\neq0$. 
We then obtain the almost complex structure $J = \frac{1}{\sqrt{-\lambda(\psip)}} \, K_{\psip}$, and we compute 
\[
\psim 	\coloneqq J\psip 
= \left(p_0\, e^{12} + p_1\, e^{34} + p_2\, \o_2 + p_3\, \o_3 \right)\wedge e^5 +  \left(q_0\,e^{12} + q_1\,e^{34} +q_2\,\o_2+ q_3\, \o_3 \right)\wedge e^6,
\] 
where $p_j$ and $q_j$ depend on $b_k$ and $c_k$. 

Let us now consider the condition $d\psim= w_1^-\omega^2$. 
We have 
\[
\omega^2 = \left(a_3^2-a_0^2\right)\omega_1^2 + 2a_0a_4\,\o_1\W e^{56} + 2a_3a_4\,\o_3\W e^{56},
\]
while 
\[
d\psim = \frac12 (p_1-p_0)\, \o_1^2 - 4 q_2\, \o_3 \wedge e^{56} + 4 q_3\, \o_2 \wedge e^{56}, 
\]
where 
\[
\frac12 (p_1-p_0) =\frac{2\,\varepsilon \,c_1\left(b_0^2-b_2^2-b_3^2\right)}{\sqrt{-\lambda(\psip)}},\quad 
q_2 = \frac{2\,\varepsilon\,b_2\left(c_0^2-c_1^2\right)}{\sqrt{-\lambda(\psip)}},\quad
q_3 = \frac{2\,\varepsilon\,b_3\left(c_0^2-c_1^2\right)}{\sqrt{-\lambda(\psip)}}.
\]
Since $\omega$ has no component along $\o_2 \wedge e^{56}$, we must have $q_3=0$, which implies $b_3=0$, as $c_0^2-c_1^2 <0$. 
On the other hand, $d\psim$ has no component along $\o_1\W e^{56}$ and $\omega$ is non-degenerate, so we must have $a_0=0$. 
Consequently, $\omega^3 = -6\,a_3^2\,a_4\,e^{123456}$, so that $a_3a_4\neq0$ and $\varepsilon$  is the sign of $-a_4$. 
Moreover, $\lambda(\psip) = 4\left( b_0^2c_1^2 + b_2^2\left(c_0^2-c_1^2\right)\right) =  4\left( b_2^2c_0^2 + c_1^2 \left(b_0^2 - b_2^2\right)\right)<0$,  
which implies $b_0^2 - b_2^2<0$, $c_0^2-c_1^2<0$ and, in particular, $b_2c_1\neq0$. 
The condition $d\psim= w_1^-\omega^2$ gives then
\[
a_3^2\,w_1^- = \frac{2\, \varepsilon\,c_1 \left(b_0^2-b_2^2\right)}{\sqrt{-\lambda(\psip)}},\qquad  
a_3a_4\,w_1^- = -\frac{4\,\varepsilon\,b_2\left(c_0^2-c_1^2\right)}{\sqrt{-\lambda(\psip)}}, 
\]
whence we get 
\begin{equation}\label{Lasta3}
a_3 = -\frac{ a_4\,c_1 \left(b_0^2-b_2^2\right)}{2\, b_2\left(c_0^2-c_1^2\right)},\qquad
w_1^- = \frac{ 8\, \varepsilon\, b_2^2\left(c_0^2-c_1^2\right)^2}{ a_4^2\,c_1 \left(b_0^2-b_2^2\right)\sqrt{-\lambda(\psip)}} . 
\end{equation}

After these computations, the expressions of $\omega$ and $\psip$ reduce to 
\[
\o = a_3\, \o_3 +a_4\, e^{56}, \quad 
\psip = (b_0 \o_0  + b_2 \o_2) \wedge e^5 + (c_0 \o_0 + c_1 \o_1 )\wedge e^6,
\]
where $a_3$ is given by \eqref{Lasta3}. 
These forms satisfy the compatibility condition $\omega\W\psip=0$, and we can then consider the 
symmetric $2$-covariant tensor $g\coloneqq \omega(\cdot,J\cdot)$. 
Its scaling  $\frac{\sqrt{-\lambda(\psip)}}{2}\,g$ is represented by the following matrix with respect to the basis $(e_1,\ldots,e_6)$
\[
\varepsilon
\begin{pmatrix}
\left(c_{0}+c_{1}\right) b_{2} a_{3} & 0 & 0 & -b_{0} c_{1} a_{3} & 0 & 0 
\\
 0 & \left(c_{0}+c_{1}\right) b_{2} a_{3} & b_{0} c_{1} a_{3} & 0 & 0 & 0 
\\
 0 & b_{0} c_{1} a_{3} &  \left(c_{1}-c_{0}\right) b_{2}a_{3} & 0 & 0 & 0 
\\
 -b_{0} c_{1} a_{3} & 0 & 0 & \left(c_{1}-c_{0}\right)  b_{2}a_{3} & 0 & 0 
\\
 0 & 0 & 0 & 0 & \left(b_{0}^{2}-b_{2}^{2}\right) a_{4} & b_{0} c_{0} a_{4} 
\\
 0 & 0 & 0 & 0 & b_{0} c_{0} a_{4} & a_{4} \left(c_{0}^{2}-c_{1}^{2}\right) 
\end{pmatrix}.
\]
The additional constraint $c_1\left(c_0+c_1\right) >0$ ensures then that $g$ is positive definite. 

We are left with the normalization condition, which gives 
\[
2\,\varepsilon\,\sqrt{-\lambda(\psip)}\,e^{123456} = \psip \W\psim = 
\frac23\,\omega^3 = - \frac{a_4^3\,c_1^2\left(b_0^2-b_2^2 \right)^2}{b_2^2\left(c_0^2-c_1^2\right)^2}e^{123456}.  
\]
From this and \eqref{Lasta3}, we finally obtain the expressions of $a_3^3$ and $a_4^3$ given in \eqref{a3a4}. 
\end{proof}

We are now ready to conclude the discussion. 

\begin{proposition}
There are no $\Ad(\H)$-invariant double half-flat $\SU(3)$-structures on $\frn$ satisfying the condition $\Delta_g \o= 4 (w_1^-)^2 \o$. 
Therefore, there are no invariant strong $\G_2T$-structures on the homogeneous space $\G/\H$, where
$\G =\SU(2)^2\times \U(1)^2$ and $\H\cong\U(1)$ is diagonally embedded into $\SU(2)^2$ as in \eqref{U1diag}.   
\end{proposition}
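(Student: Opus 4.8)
By the discussion preceding the statement, the second assertion follows at once from the first, via Lemma \ref{lem:reduction-su(3)}, Proposition \ref{prop:product-str} and Corollary \ref{corDHF}: an invariant strong $\G_2T$-structure on $\G/\H$ would produce an $\Ad(\H)$-invariant $\SU(3)$-structure on $\frn$ satisfying \eqref{RecapEqLast1}--\eqref{RecapEqLast2}, which after the rotation of $\psi_+$ may be taken double half-flat with $\Delta_g\omega=4(w_1^-)^2\omega$. So the plan is to prove the first claim: no $\Ad(\H)$-invariant double half-flat $\SU(3)$-structure on $\frn$ satisfies $\Delta_g\omega=4(w_1^-)^2\omega$. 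By Proposition \ref{prop:double-half-flat-charact}, up to the action of the torus $\U=\exp(\R e_5)$ every such structure is one of the explicit pairs
\[
\omega=a_3\,\omega_3+a_4\,e^{56},\qquad
\psi_+=(b_0\omega_0+b_2\omega_2)\wedge e^5+(c_0\omega_0+c_1\omega_1)\wedge e^6,
\]
with $a_3,a_4$ as in \eqref{a3a4}; moreover each member of the family has $w_1^-\neq0$ and $a_3a_4\neq0$, since by \eqref{Lasta3}, \eqref{a3a4} and the open conditions $b_0^2c_1^2+b_2^2(c_0^2-c_1^2)<0$, $c_1(c_0+c_1)>0$ all the factors appearing in the formulas for $w_1^-$, $a_3$ and $a_4$ are nonzero (in particular $b_2c_1\neq0$, $c_0^2<c_1^2$, $b_0^2<b_2^2$).

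Next I would compute $\Delta_g\omega$ for this family, using the metric $g$ and its Hodge operator $\star_6$ from the proof of Proposition \ref{prop:double-half-flat-charact}. Since the Lee form of a double half-flat structure is zero one has $\omega\wedge d\omega=0$, hence $\delta_g\omega=0$ and $\Delta_g\omega=\delta_g d\omega=-\star_6 d\star_6 d\omega$, where by \eqref{eq:str-diff-2}
\[
d\omega=4a_3\,\omega_2\wedge e^5-a_4\,\omega_1\wedge e^6.
\]
Expanding the $\Ad(\H)$-invariant $2$-form $\Delta_g\omega$ in the basis $\{\omega_0,\omega_1,\omega_2,\omega_3,e^{56}\}$, the equation $\Delta_g\omega=4(w_1^-)^2\omega$ becomes: the $\omega_0$-, $\omega_1$- and $\omega_2$-components of $\Delta_g\omega$ vanish, while its $\omega_3$- and $e^{56}$-components equal $4(w_1^-)^2a_3$ and $4(w_1^-)^2a_4$. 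Equivalently, since $\langle\Delta_g\omega,\alpha\rangle=\langle d\omega,d\alpha\rangle$ for every invariant $2$-form $\alpha$, one may compute instead the five pairings $\langle d\omega,d\beta\rangle$ for $\beta\in\{\omega_0,\omega_1,\omega_2,\omega_3,e^{56}\}$ using only the inner product that $g$ induces on $3$-forms. After clearing the denominators by $\sqrt{-\lambda(\psi_+)}$ and powers of $a_3,a_4$, this turns into a system of polynomial equations in $b_0,b_2,c_0,c_1$ (and the sign $\varepsilon$).

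Finally I would show that this system has no solution compatible with $b_0^2c_1^2+b_2^2(c_0^2-c_1^2)<0$ and $c_1(c_0+c_1)>0$. I expect the vanishing of the $\omega_0$-, $\omega_1$- and $\omega_2$-components to force the parameters onto a very small set, on which the two remaining equations then fail (they cannot hold simultaneously with the strictly positive factor $4(w_1^-)^2$); this contradiction completes the argument. The main obstacle is precisely this last step: performing the $\star_6$/Laplacian computation over the five-dimensional coefficient space and distilling a transparent contradiction from the resulting polynomial identities. The algebra is substantial, and --- just as in Proposition \ref{prop:double-half-flat-charact} --- it is cleanest to carry it out with a computer algebra system (Maple 2021), verifying directly that $\Delta_g\omega=4(w_1^-)^2\omega$ has no solution in the allowed region.
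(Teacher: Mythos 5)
Your reduction of the second assertion to the first, and your setup for the first claim (the normal form of Proposition \ref{prop:double-half-flat-charact}, the observation that $\delta_g\omega=0$ so $\Delta_g\omega=-\star_6 d\star_6 d\omega$, and the plan to compare components of $\Delta_g\omega$ and $4(w_1^-)^2\omega$ in the invariant basis) all match the paper's argument. But there is a genuine gap: the entire mathematical content of this proposition is the verification that the resulting system has no solution in the admissible parameter region, and you do not carry it out --- you write ``I expect'' the components to force a contradiction and defer the check to a computer algebra system. An existence/non-existence claim over a several-parameter family cannot rest on an unexecuted expectation; as it stands the proposal establishes nothing beyond the (already known) strategy.

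For comparison, the paper does the computation explicitly and the contradiction is short and transparent, not a brute-force CAS ``no solutions'' certificate. One computes $\star_6 d\omega$ with the metric of Proposition \ref{prop:double-half-flat-charact}, obtaining $d\star_6 d\omega=(r_0-r_1)\,e^{1234}-4s_2\,\omega_3\wedge e^{56}$ with explicit rational expressions for $r_0-r_1$ and $s_2$ in $b_0,b_2,c_0,c_1$, and compares with $2(w_1^-)^2\omega^2$ using \eqref{Lasta3} and \eqref{a3a4}. The $\omega_3\wedge e^{56}$-component of the equation reduces, after dividing by $a_4$, to
\[
\varepsilon\,\frac{16\,c_1\,b_0^2\left(\lambda(\psi_+)-8\,b_2^2c_0^2\right)}{b_2\left(-\lambda(\psi_+)\right)^{3/2}}=0,
\]
which forces $b_0=0$ since $\lambda(\psi_+)<0$ and $c_1\neq0$; then $\lambda(\psi_+)=4b_2^2(c_0^2-c_1^2)$ and the $e^{1234}$-component reduces to $4\varepsilon b_2^2/\sqrt{-\lambda(\psi_+)}=0$, contradicting $b_2\neq0$. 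To complete your proof you would need to supply this (or an equivalent) explicit derivation; merely asserting that Maple finds no solution, without exhibiting the computation or the contradiction, leaves the key step unproved.
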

\begin{proof}
The equation $\Delta_g \o= 4 (\o_1^-)^2 \o$ is equivalent to $d\star_6 d \o= - 2 (w_1^-)^2 \o^2$. 
We first compute the expression of $d\star_6 d \omega$ for the generic double half-flat $\SU(3)$-structure $(\omega,\Psi)$ 
described in Proposition \ref{prop:double-half-flat-charact}. 

Let us consider $\o = a_3\o_3 +a_4 e^{56}$, where $a_3$ and $a_4$ are given in \eqref{a3a4}. 
We have that $d \omega = 4 a_3\, \omega_2 \wedge e^5 - a_4\, \omega_1 \wedge e^6$. 
The Hodge operator $\star_6$ is determined by the metric described in the proof of Proposition  \ref{prop:double-half-flat-charact} 
and the volume form $2\varepsilon\sqrt{-\lambda(\psip)}\,e^{123456}$. We compute 
\[
\star_6 d \omega = \left({r_0\,e^{12} +r_1\,e^{34}} + r_2 \o_2 \right) \wedge e^5 + \left({s_0\,e^{12} + s_1\,e^{34}} + s_2 \o_2\right) \wedge e^6, 
\]
where the components $s_i$ and $r_i$ depend on $b_k$ and $c_k$. 
We then have
\[
d \star_6 d \omega = { \left(r_0- r_1\right) } e^{1234} -4 s_2\, \o_3 \wedge e^{56}, 
\]
where the coefficients are 
\[
\begin{split}
r_0-r_1 	&= 	-\frac{32\,b_2^2\left(c_0^2-c_1^2\right)}{a_4^2\,c_1^2\left(b_0^2-b_2^2\right)\lambda(\psip)}
			\left(5\,b_0^2\,c_0^2\,c_1^2 - b_0^2\,c_1^4 - b_2^2\,c_0^4 + b_2^2\,c_1^4\right),\\
s_2 		&= 	-\frac{32\,b_2 \left(c_0^2-c_1^2\right)^2}{a_4^2\,c_1 \left(b_0^2-b_2^2\right)^2 \lambda(\psip)}
			\left( b_0^4\,c_1^2 - 2\,b_0^2\,b_2^2\,c_0^2 + b_2^4\,c_0^2 - b_2^4\,c_1^2 \right),
\end{split}
\]
and the expressions of $a_4$ and $\lambda(\psip)$ are provided in Proposition \ref{prop:double-half-flat-charact}.

In addition, using the expression \eqref{Lasta3} of $a_3$ and $w_1^-$, we get
\[
2 (w_1^- )^2\,\omega^2 = 	\frac{64\, b_2^2 \left(c_0^2-c_1^2\right)^2}{a_4^2\, \lambda(\psip)}\,  e^{1234} 
					+ \frac{128\, b_2^3\, \left(c_0^2-c_1^2\right)^3}{a_4^2\,c_1 \left(b_0^2-b_2^2\right)  \lambda(\psip)}\, \o_3 \wedge e^{56}.
\]
Therefore, the identity $d\star_6 d \o= - 2 (w_1^-)^2\, \o^2$ holds if and only if 
\[ 
(r_0-r_1) + \frac{64\, b_2^2 \left(c_0^2-c_1^2\right)^2}{a_4^2\, \lambda(\psip)} = 0
= s_2 - \frac{32\, b_2^3\, \left(c_0^2-c_1^2\right)^3}{a_4^2\,c_1 \left(b_0^2-b_2^2\right)  \lambda(\psip)}. 
\]  
Taking into account the expression \eqref{a3a4} of $a_4^3$, we get
\[ 
0 = \frac{1}{a_4} \left(s_2 - \frac{32\, b_2^3\, \left(c_0^2-c_1^2\right)^3}{a_4^2\,c_1 \left(b_0^2-b_2^2\right)  \lambda(\psip)}\right) =
\varepsilon\, \frac{16\,c_1\,b_0^2\left(\lambda(\psip)-8\,b_2^2 c_0^2\right)}{b_2\left(-\lambda(\psip)\right)^{3/2}}. 
\]
Since $\lambda(\psip)<0$ and $c_1\neq0$, we must have $b_0=0$. This implies $\lambda(\psip)= 4 b_2^2\left(c_0^2-c_1^2\right)$ and
\[ 
0 = \frac{1}{a_4} \left((r_0-r_1) + \frac{64\, b_2^2 \left(c_0^2-c_1^2\right)^2}{a_4^2\, \lambda(\psip)}  \right) = 
\frac{4\,\varepsilon\,b_2^2}{\sqrt{- \lambda(\psip)}}. 
\]
Therefore, there is no solution to the equation $d\star d \o= - 2 (w_1^-)^2 \o^2$ .

Corollary \ref{corDHF} and Lemma \ref{lem:reduction-su(3)} imply that $\G/\H$ does not admit any invariant strong $\G_2T$-structure. 
\end{proof}

We are left with the spaces $8$-$9$-$10$ of Table \ref{TabRed}. 
We now discard the existence of invariant strong $\G_2T$-structures on them by taking into account Corollary \ref{Corb3N}.

\begin{proposition}
There are no invariant strong $\G_2T$-structures on the homogeneous spaces 
$F(1,2)\times S^1 = \SU(3)\times \U(1)/\U(1)^2$, 
$\mathbb{C}P^3\times S^1 = \Sp(2)\times\U(1)/\Sp(1)\times\U(1)$ and 
$S^6 \times S^1 = \G_2\times\U(1)/\SU(3)$.
\end{proposition}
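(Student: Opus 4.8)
The plan is to exploit Corollary \ref{Corb3N}: each of these spaces is, up to a covering, a product $N\times S^1$ with $N$ equal to the full flag manifold $F(1,2)=\SU(3)/\U(1)^2$, the complex projective space $\mathbb{C}P^3=\Sp(2)/(\Sp(1)\times\U(1))$, or the sphere $S^6=\G_2/\SU(3)$, and in each case the circle factor corresponds to the central $\U(1)$-summand of $\G$. I would show that an invariant strong $\G_2T$-structure on $M$ necessarily induces on $N$ an $\SU(3)$-structure of the type appearing in Corollary \ref{Corb3N}, which then forces $b_3(N)>0$, contradicting $b_3(F(1,2))=b_3(\mathbb{C}P^3)=b_3(S^6)=0$.

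First I would fix a reductive decomposition $\frg=\frm\oplus\frh$ in which $\H$ sits inside the semisimple factor $\G'$ of $\G$, so that $\frm=\frm'\oplus\R$ with $\R=\Lie(\U(1))$ a trivial $\Ad(\H)$-module and $\frm'$ the isotropy module of $N=\G'/\H$; since the $\U(1)$-factor is central, the dual left-invariant $1$-form $\eta$ on the circle is closed. The key structural point is that the isotropy representation of each $N$ has no trivial summand: this is immediate for $S^6=\G_2/\SU(3)$, which is isotropy irreducible; for $F(1,2)=\SU(3)/\U(1)^2$ the module $\frm'$ splits into the three non-trivial $2$-dimensional root spaces; and for $\mathbb{C}P^3=\Sp(2)/(\Sp(1)\times\U(1))$ it splits into the $4$-dimensional standard $\Sp(1)$-module and the non-trivial $2$-dimensional $\U(1)$-module given by the vertical tangent space of the twistor fibration $\mathbb{C}P^3\to S^4$. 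Hence $((\frm')^*)^\H=0$, so an $\Ad(\H)$-invariant bilinear form cannot pair $\frm'$ with $\R$: every invariant metric on $M$ is a Riemannian product $g_N+\mu^2\,\eta\otimes\eta$, and the space of invariant $1$-forms is $(\frm^*)^\H=\langle\eta\rangle$.

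Now suppose $\f$ is an invariant strong $\G_2T$-structure on $M$. By the previous step $g_\f$ is a product metric in which $\eta$ is a closed unit $1$-form pulled back from $S^1$, so $\f$ decomposes as $\f=\omega\W\eta+\psip$ for an invariant $\SU(3)$-structure $(\omega,\Psi)$ on $N$, exactly as in Section \ref{SecProdS1}; and, being invariant, the Lee form of $\f$ is $\theta=\lambda\eta$ for a constant $\lambda$. Thus $(N,\omega,\Psi)$ satisfies the hypotheses of Corollary \ref{Corb3N}, whence $b_3(N)>0$. Since $F(1,2)$, $\mathbb{C}P^3$ and $S^6$ all have vanishing third Betti number, this is a contradiction, and none of the three spaces admits an invariant strong $\G_2T$-structure.

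I expect the only step needing genuine care to be the identification of the isotropy modules of $F(1,2)$ and $\mathbb{C}P^3$ and the verification that neither contains a trivial summand, since that single fact simultaneously pins down the product form of the metric and confines the Lee form to the $S^1$-direction; the remaining inputs — the standard reduction $\f=\omega\W\eta+\psip$ on a product, the values of the third Betti numbers, and the application of Corollary \ref{Corb3N} — are routine.
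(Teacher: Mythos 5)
Your proposal is correct and follows essentially the same route as the paper's proof: both hinge on the observation that $\H$ fixes no nonzero vector in the isotropy module of $\G'/\H$, which forces the invariant metric to split as a product and the Lee form to be proportional to the closed $1$-form $\eta$ on the circle factor, so that Corollary \ref{Corb3N} applies and contradicts $b_3(F(1,2))=b_3(\mathbb{C}P^3)=b_3(S^6)=0$. Your explicit identification of the isotropy summands of $F(1,2)$ and $\mathbb{C}P^3$ simply spells out the detail the paper states in one line.
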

\begin{proof}
Let $\G/\H$ be one of these spaces. Then, $\G=\G'\times \U(1)$, $\H \subset \G'$,  and 
$\H$ does not fix any non-zero vector on the tangent space to $\G'/\H$. 
Therefore, $\G/\H\cong\G'/\H \times \U(1)$, and invariant metrics on $\G/\H$ make $\G'/\H$ perpendicular to $\U(1)$. 
Consequently, invariant $\G_2$-structures are of the form $\varphi= \omega \wedge \eta + \psi_+$, 
for some invariant $\SU(3)$-structure $(\omega,\Psi)$ on $\G'/\H$ and some invariant $1$-form $\eta$ on $\U(1)$. 
In addition, the Lee form of any invariant $\G_2$-structure must be proportional to $\eta$. 
According to Corollary \ref{Corb3N}, if there were an invariant strong $\G_2T$-structure on $\G/\H$, then $b_3(\G'/\H)>0$. 
However, the spaces $\G'/\H$ correspond to $F(1,2)$, $\mathbb{C}P^3$, and $S^6$, and these have vanishing third Betti number. 
\end{proof}

%
\section{Examples of $\G_2T$-structures evolving under the Laplacian coflow}\label{CoflowSect}
We now focus on the examples of left-invariant strong $\G_2T$-structures on $\G=\SU(2)\times \U(1)^4$ and $\G=\SU(2)\times\SU(2)\times \U(1)$  
considered in the proofs of propositions \ref{S3T4G2T} and \ref{S3S3S1G2T}, respectively, and we investigate their behavior under the Laplacian coflow 
\[
\frac{\partial}{\partial t} \psi(t) = \Delta_{\psi(t)} \psi(t) = (dd^{*_t}+d^{*_t}d)\psi_t = d\star_t d\star_t \psi(t) - \star_td\star_td\psi(t). 
\]
Here, $\psi(t)$ is a one-parameter family of definite $4$-forms and $\star_t$ is the Hodge operator determined by the induced metric and 
an orientation that is fixed throughout the flow. 
A natural choice for the latter is the orientation determined by a given $\G_2$-structure $\f_0$ such that $\psi(0)=\star_0\f_0$. 

Since we are interested in left-invariant solutions to the flow, it is sufficient working on the Lie algebra $\frg$ of the Lie group $\G$. 
In this setting, the Laplacian coflow is equivalent to a system of ODEs for certain unknown functions, and  
the choice of the initial datum $\psi(0) = \psi_0$ gives an initial value problem for these functions.  
We will determine the explicit solution to this problem in both examples, 
showing that the solution to the Laplacian coflow starting at the strong $\G_2T$-structure $\psi_0$ under exam exists (at least) for all positive times 
and it defines a strong $\G_2T$-structure.

\subsection{The example on $\mathbf{{SU}(2)\times {U}(1)^4}$}
As in the proof of Proposition \ref{S3T4G2T}, 
we choose a basis $(e_1,\ldots,e_7)$ of $\frg= \R^3\oplus\frsu(2)\oplus\R$ whose dual basis $(e^1,\ldots,e^7)$ satisfies the structure equations 
\[
de^1=de^2=de^3=de^7=0,\quad de^4=e^{56},\quad de^5=-e^{46},\quad de^6=e^{45}. 
\] 
Let us consider the strong $\G_2T$-structure 
\[
\varphi_0 = e^{123}+ e^1\wedge (e^{45}+e^{67}) + e^2\wedge (e^{46}-e^{57}) - e^3\wedge (e^{47}+e^{56}), 
\]
inducing the metric $g_0 = \sum_{i=1}^7 e^i\odot e^i$ and the orientation $dV_0 = e^{1234567}$. The dual $4$-form is
\[
\psi_0 = \star_0\f_0 = e^{4567}+e^{23}\W(e^{45}+e^{67}) - e^{13}\W(e^{46}-e^{57}) -e^{12}\W(e^{47}+e^{56}), 
\]
and it satisfies the equation 
\[
d\psi_0 = \theta_0\W\psi_0,
\]
where the Lee form is $\theta_0=e^7$. Recall that the torsion $3$-form of $\f_0$ is $T_0=e^{456}$ 
and that this strong $\G_2T$-structure solves the twisted $\G_2$ equation \eqref{eqn:strom7}. 

We now look for the solution to the Laplacian coflow starting at $\psi_0$, namely  
\[
\begin{cases}
\frac{\partial}{\partial t} \psi(t) = \Delta_{\psi(t)} \psi(t), \\
\psi(0)=\psi_0.
\end{cases}
\]
We consider the following Ansatz
\[
\psi(t) =  a_1(t) e^{4567}+a_2(t)e^{2345} + a_3(t) e^{2367} -a_4(t) e^{1346} + a_5(t)e^{1357} -a_6(t)e^{1247} -a_7(t) e^{1256}, 
\]
where for $k=1,\ldots,7$, $a_k(t)$ is an unknown function such that $a_k(0)=1$. 
Henceforth, we will omit the dependence on $t$ of these functions for brevity. 

Choosing the orientation $\Omega \coloneqq dV_0$, we obtain  
\[
\begin{split}
&B_{\psi(t)}(e^1,e^1) = a_1a_2a_3\, \Omega^{\otimes2}, \quad 
B_{\psi(t)}(e^2,e^2) = a_1a_4a_5\, \Omega^{\otimes2},\quad 
B_{\psi(t)}(e^3,e^3) = a_1a_6 a_7 \, \Omega^{\otimes2},\\
&B_{\psi(t)}(e^4,e^4) = a_3a_5 a_7 \, \Omega^{\otimes2},\quad 
B_{\psi(t)}(e^5,e^5) = a_3a_4 a_6 \, \Omega^{\otimes2},\quad 
B_{\psi(t)}(e^6,e^6) = a_2a_5 a_6 \, \Omega^{\otimes2},\\
&B_{\psi(t)}(e^7,e^7) = a_2a_4 a_7 \, \Omega^{\otimes2}, \quad B_{\psi(t)}(e^i,e^j)=0, \mbox{ if } i\neq j. 
\end{split}
\]
Thus, $\psi(t)$ is a definite $4$-form as long as the functions $a_k$ are positive. 
In such a case, one can easily check that $\psi(t)$ defines a $\G_2T$-structure with Lee form $\theta(t) = \lambda(t) e^7$, 
for some positive function $\lambda(t)$ such that $\lambda(0)=1$,  if and only if 
\[
a_3 = \lambda\,a_2,\quad a_5 = \lambda\,a_4,\quad a_6 = \lambda\, a_7.
\]  
From the expression of $B_{\psi(t)}$ it follows that the metric induced by $\psi(t)$ is 
\[
\begin{split}
g_{\psi(t)}	&=	\frac{a_4a_7}{a_2}\sqrt{\frac{\lambda}{a_1}}e^1\odot e^1 + \frac{a_2a_7}{a_4}\sqrt{\frac{\lambda}{a_1}}e^2\odot e^2 
			+\frac{a_2a_4}{a_7}\sqrt{\frac{\lambda}{a_1}}e^3\odot e^3 \\
		&\quad	+  \sum_{k=4}^6 \sqrt{\frac{a_1}{\lambda}}e^k\odot e^k 
			+ \lambda\sqrt{\lambda a_1} e^7\odot e^7. 
\end{split} 
\]
The Hodge operator $\star_t$ is determined by $g_{\psi(t)}$ and the volume form $\sqrt{\det(g_{\psi(t)})}\Omega$. 
Now, a straightforward computation shows that  
\[
\Delta_{\psi(t)}\psi(t) 	= 	a_2\sqrt{\frac{\lambda}{a_1}}\left(e^{2345}+\lambda\, e^{2367}\right) 
					- a_4\sqrt{\frac{\lambda}{a_1}}\left(e^{1346}-\lambda\, e^{1357}\right) 
					- a_7\sqrt{\frac{\lambda}{a_1}}\left(e^{1256}+\lambda\, e^{1247}\right). 
\]
Comparing this expression with that of $\psi(t)$, we see that the Laplacian coflow is equivalent to the following initial value problem
\[
\left\{
\begin{split}
&a_1'=0,\\ 
&a_k' = a_k\,\sqrt{\frac{\lambda}{a_1}},\quad k=2,4,7,\\
&(\lambda a_k)' = \lambda\, a_k\,\sqrt{\frac{\lambda}{a_1}},\quad k=2,4,7,\\
&a_1(0)=1=a_2(0)=a_3(0)=a_4(0)=\lambda(0), 
\end{split}\right.
\]
where the prime symbol denotes the derivative with respect to $t$. 
The solution to this system is easily seen to be
\[
a_1 \equiv 1,\quad \lambda \equiv 1,\quad  a_2 = a_4 = a_7 = \exp(t). 
\]
Therefore, the solution to the Laplacian coflow starting at $\psi_0$ is given by the definite $4$-form 
\[
\psi(t) = e^{4567}+\exp(t)\left(e^{2345}+e^{2367} -e^{1346}+e^{1357} - e^{1247}-e^{1256}\right).
\]
This exists for all real times and defines a strong $\G_2T$-structure with constant Lee form $\theta(t) \equiv \theta_0= e^7$ and constant
torsion $3$-form $T(t)\equiv T_0 = e^{456}$. Moreover, it solves the twisted $\G_2$ equation \eqref{eqn:strom7}, since the definite $3$-form
\[
\f(t) = \star_t\psi(t) = \exp\left(\frac32 t\right)e^{123}+ \exp\left(\frac{t}{2}\right)\left(e^{145}+e^{167}+ e^{246}-e^{257} - e^{347}-e^{356}\right), 
\]
satisfies the equation $d\f(t)\W\f(t)=0$. 

\smallskip

Notice that, for every $t\in\R$, the global frame 
\[
e_k(t) \coloneqq \exp\left(\frac{t}{2}\right) e_k,~k=1,2,3,\quad e_k(t) \coloneqq e_k,~k=4,5,6,7,
\]
is adapted to the $\G_2$-structure $\f(t)$, and   
the manifold $\SU(2)\times\U(1)^4$ splits as the Riemannian product of the $3$-torus $T^3=\U(1)^3$ endowed with the metric 
$g_{\f(t)}|_{T^3} = \exp(t)  \sum_{k=1}^3e^k\odot e^k$ and the homogeneous Hopf surface $X^4= \SU(2)\times\U(1)$ endowed with the metric 
$g_{\f(t)}|_{X^4} = \sum_{k=4}^7e^k\odot e^k$.  
In particular, the $3$-torus is calibrated by $\f(t)$, i.e., it is an associative submanifold of $\SU(2)\times\U(1)^4$, 
and its volume increases exponentially along the flow. 
On the other hand, the Hopf surface $X^4$ is calibrated by $\psi(t)$, i.e., it is a coassociative submanifold of $\SU(2)\times\U(1)^4$, 
and its volume is fixed along the flow.

\subsection{The example on $\mathbf{{SU}(2)\times SU(2)\times  {U}(1)}$}
We consider the Lie algebra $\frg = \frsu(2)\oplus\frsu(2)\oplus\R$ and a basis  $(e^1,\ldots,e^7)$ of $\frg^*$ 
for which the structure equations are the following  
\[
de^1=e^{23},~de^2=-e^{13},~de^3=e^{12},~de^4=e^{56},~de^5=-e^{46},~de^6=e^{45},~de^7=0. 
\]
We will study the Laplacian coflow starting at the strong $\G_2T$-structure defined by the $3$-form 
\[
\f_0 = \left( e^{14} + e^{25} - e^{36}\right)\W e^7 +  e^{123} + e^{156}-e^{246} - e^{345}.
\]
Recall from the proof of Proposition \ref{S3S3S1G2T} that this definite $3$-form induces the metric 
$g_0 = \sum_{i=1}^7 e^i\odot e^i$ and the orientation $dV_0 = e^{1234567}$. The dual $4$-form is
\[
\psi_0 = \star_0\f_0 = e^{4567} + e^{2345} + e^{2367} - e^{1346} + e^{1357} - e^{1247} - e^{1256}, 
\]
and it satisfies the equation $d\psi_0 = \theta_0\W\psi_0$,
where the Lee form is $\theta_0=e^7$. Moreover, the torsion $3$-form of $\f_0$ is $T_0=e^{123} + e^{456}$. 

The result of the previous section suggests the choice of following Ansatz for the solution of the Laplacian coflow starting at $\psi_0$
\[
\psi(t) = a_1(t)\,e^{4567} + a_2(t)\left(e^{2345} + e^{2367} - e^{1346} + e^{1357} - e^{1247} - e^{1256}\right), 
\]
where $a_1$ and $a_2$ are unknown functions such that $a_1(0)=1=a_2(0)$. 

As before, this $4$-form is definite as long as these functions are positive. 
When this happens, it defines a $\G_2T$-structure with constant Lee form $\theta(t)\equiv\theta_0 = e^7$, since $d\psi(t) = e^7 \W \psi(t)$,
and it induces the metric 
\[
g_{\psi(t)}	=	\sum_{k=1}^3 \frac{a_2}{\sqrt{a_1}}\,e^k\odot e^k  + \sum_{k=4}^7 \sqrt{a_1}\,e^k\odot e^k. 
\]
We then obtain
\[
\Delta_{\psi(t)}\psi(t) 	= \frac{a_1+a_2}{\sqrt{a_1}}\left(e^{2345} + e^{2367} - e^{1346} + e^{1357} - e^{1247} - e^{1256}\right), 
\]
whence it follows that the Laplacian coflow starting at $\psi_0$ is equivalent to the initial value problem 
\[\left\{
\begin{split}
&a_1' 		= 0, \\
&a_2' 		= \frac{a_1+a_2}{\sqrt{a_1}},\\
&a_1(0) 	= 1 = a_2(0). 
\end{split}\right.
\]
The solution is 
\[
a_1(t) \equiv 1, \quad a_2(t) = 2\exp(t)-1.  
\]
Thus, the solution to the Laplacian coflow starting at $\psi_0$ is given by the $4$-form 
\[
\psi(t) = e^{4567} + \left(2\exp(t)-1\right)\left(e^{2345} + e^{2367} - e^{1346} + e^{1357} - e^{1247} - e^{1256}\right)
\]
and it exists for all $t > -\log2$. Standard computations show that the $3$-form $\f(t) = \star_t\psi(t)$ has the following expression
\[
\f(t)  = \left(2\exp(t)-1\right)^{3/2}e^{123}+ \left(2\exp(t)-1\right)^{1/2}\left(e^{145}+e^{167}+ e^{246}-e^{257} - e^{347}-e^{356}\right), 
\]
and the intrinsic torsion form $\tau_0(t)$ is given by 
\[
\tau_0(t) = \frac17\star_t(d\f(t)\W\f(t)) = \frac67 \frac{1}{\sqrt{2\exp(t)-1}}, 
\]
so that $\lim_{t\to+\infty}\tau_0(t) = 0$. 
Finally, the torsion $3$-form is 
\[
T(t) =  \frac{7}{6} \tau_0(t)  \f(t) - \star_t d\f(t) + \star_t(\theta(t) \wedge \f(t)) = \sqrt{2\exp(t)-1}\,e^{123} + e^{456}. 
\]
Notice that $T(t)$ is closed, thus $\f(t)$ defines a strong $\G_2T$-structure. 

\smallskip

Here, for every $t\in( -\log2,+\infty)$, the global frame 
\[
e_k(t) \coloneqq  \left(2\exp(t)-1\right)^{1/2} e_k,~k=1,2,3,\quad e_k(t) \coloneqq e_k,~k=4,5,6,7,
\]
is adapted to the $\G_2$-structure $\f(t)$, and   
the manifold $\SU(2)\times\SU(2)\times\U(1)$ splits as the Riemannian product of the $3$-sphere $S^3\cong\SU(2)$ endowed with the metric 
$g_{\f(t)}|_{S^3} = \left(2\exp(t)-1\right) \sum_{k=1}^3e^k\odot e^k$ and the homogeneous Hopf surface $X^4= \SU(2)\times\U(1)$ 
endowed with the metric $g_{\f(t)}|_{X^4} = \sum_{k=4}^7e^k\odot e^k$.  
In particular, the $3$-sphere is an associative submanifold of $\SU(2)\times\SU(2)\times\U(1)$ and its volume increases exponentially along the flow, 
while the Hopf surface $X^4$ is a coassociative submanifold of $\SU(2)\times\SU(2)\times\U(1)$  and its volume is fixed along the flow.

\bigskip

{\bf Acknowledgements.}
The authors were supported by GNSAGA of INdAM and by the project PRIN 2017  
``Real and Complex Manifolds: Topology, Geometry and Holomorphic Dynamics''. 
A.F.~and A.R.~were also supported by the project PRIN 2022 "Real and Complex Manifolds: Geometry and Holomorphic Dynamics". 
A.F.~was also supported by a grant from the Simons Foundation ($\#$944448).  
The authors would like to thank Mario Garcia-Fern\'andez for useful conversations.


\begin{thebibliography}{10}

\bibitem{Agr}  
I.~Agricola.
\newblock The {S}rn\'\i\ lectures on non-integrable geometries with torsion.
\newblock {\em Arch.~Math. (Brno)} {\bf42}, 5--84, 2006. 

\bibitem{AF} 
I.~Agricola, T.~Friedrich.
\newblock A note on flat metric connections with antisymmetric torsion. 
\newblock {\em Differential Geom.~Appl.} {\bf28}, 480--487, 2010. 

\bibitem{AK}
D.~V.~Alekseevsky, B.~N.~Kimelfeld.
\newblock Structure of homogeneous Riemannian spaces with zero Ricci curvature.
\newblock {\em Funktional.~Anal.~i Prilo\v{z}en} {\bf 9}, 5--11, 1975.

\bibitem{AAF}
 L. \'Alvarez-C\'nsul, A.  De Arriba de La Hera, M. Garcia-Fernandez.
\newblock $(0,2)$ Mirror Symmetry on homogeneous Hopf surfaces.
\newblock
 \href{https://arxiv.org/abs/2012.01851}{arXiv:2012.01851}. To appear in {\em International Mathematics Research Notices}. 

\bibitem{BF}  
L. Bagaglini, A. Fino.
\newblock The Laplacian coflow on almost-abelian Lie groups.
\newblock {\em Ann.~Mat.~Pura Appl.} {\bf197}, 1855--1873, 2018. 

\bibitem{BeVe}
L.~Bedulli, L.~Vezzoni.
\newblock The {R}icci tensor of {SU}(3)-manifolds.
\newblock {\em J.\,Geom.\,Phys.} {\bf 57}, 1125--1146, 2007.

\bibitem{BerH}
M.~Berger.
\newblock Les vari\`et\`es riemanniennes homog\`enes normales simplement connexes \`a courbure strictement
positive.
\newblock{\em Ann.~Scuola Norm.~Sup.~Pisa}~\textbf{15},  179--246, 1961.

\bibitem{Bis} 
J.-M.~Bismut.
\newblock A local index theorem for non-K\"ahler manifolds,.
\newblock {\em Math.~Ann.} {\bf284}, 681--699, 1989. 

\bibitem{Bry}
R.~L.~Bryant.
\newblock Some remarks on {G$_2$}-structures.
\newblock In {\em Proceedings of {G}{\"o}kova {G}eometry-{T}opology {C}onference 2005}, pages 75--109. 
G{\"o}kova Geometry/Topology Conference (GGT), G{\"o}kova, 2006.

\bibitem{ChSa}
S..~G.~Chiossi, S.~Salamon.
\newblock The intrinsic torsion of {$\rm SU(3)$} and {G$_2$} structures.
\newblock In {\em Differential geometry, {V}alencia, 2001}, pages 115--133. World Sci. Publ., River Edge, NJ, 2002.

\bibitem{ChSw} 
S.~G. Chiossi, A.~Swann.
\newblock G{$_2$}-structures with torsion from half-integrable nilmanifolds.
\newblock {\em J.~Geom.~Phys.} {\bf 54}, 262--285, 2005.

\bibitem{CFT} 
\newblock A. Clarke, M. Garcia-Fernandez, C. Tipler.
\newblock T-Dual solutions and infinitesimal moduli of the $G_2$-Strominger system.
\newblock \href{https://arxiv.org/abs/2005.09977}{arXiv:2005.09977}. To appear in {\em Advances in Theoretical and Mathematical Physics}. 

\bibitem{FerGray} 
M.~Fern\'andez, A.~Gray. 
\newblock  Riemannian manifolds with structure group $\G_2$.
\newblock  {\em Ann.~Mat.~Pura Appl.}  {\bf 32}, 19--45, 1982.

\bibitem{Fr}
T. Friedrich.
\newblock $G_2$-manifolds with parallel characteristic torsion. 
\newblock {\em  Differential Geom.~Appl.} \textbf{25}, 632--648, 2007.

\bibitem{FI}
T. Friedrich,  S.  Ivanov.
\newblock Parallel spinors and connections with skew-symmetric torsion in string theory.
\newblock {\em Asian J.~Math.}  \textbf{6}, 303--335, 2002.

\bibitem{FI2}
T. Friedrich,  S.  Ivanov.
\newblock Killing spinor equations in dimension 7 and geometry of integrable G$_2$-manifolds. 
\newblock {\em J.~Geom.~Phys.} {\bf48}, 1--11, 2003.

\bibitem{Gar} 
M.~Garcia-Fern\'andez.
\newblock Ricci flow, Killing spinors, and T-duality in generalized geometry.
\newblock {\em Adv.~Math.} {\bf350}, 1059--1108, 2019. 

\bibitem{GRST}
M. Garcia-Fern\'andez, R. Rubio, C. S. Shahbazi, C. Tipler. 
\newblock Canonical metrics on holomorphic Courant algebroids.   
\newblock {\em Proc.~London Math.~Soc.} {\bf125}, 700--758, 2022. 

\bibitem{FS} 
M.~Garcia-Fern\'andez, J.~Streets.
\newblock Generalized Ricci Flow.
\newblock{ \em AMS University Lecture Series} {\bf 76}, 2021.

\bibitem{Gau} 
P.~Gauduchon.
\newblock Hermitian connections and {D}irac operators.
\newblock {\em Boll.~Un.~Mat.~Ital.~B} {\bf11}, 257--288, 1997. 

\bibitem{GMPW}
J. P. Gauntlett, D. Martelli, S. Pakis,  D. Waldram. 
\newblock G-Structures and Wrapped NS5-Branes. 
\newblock {\em Commun.~Math.~Phys.} {\bf247}, 421--445, 2004. 

\bibitem{Hitchin}
N.~Hitchin.
\newblock{Stable forms and special metrics.}
 Global differential geometry: the mathematical legacy of Alfred
Gray (Bilbao, 2000), Contemp.~Math., vol.~288, AMS, Providence, RI, 2001, pp.~70--89.

\bibitem{Iva} 
S. Ivanov. 
\newblock Heterotic supersymmetry, anomaly cancellation and equations of motion. 
\newblock {\em Phys.~Lett.~B} {\bf685}, 190--196, 2010.  

\bibitem{IP} 
S. Ivanov, G. Papadopoulos. 
\newblock Vanishing theorems and string backgrounds. 
\newblock {\em Classical Quantum Gravity} {\bf18}, 1089--1110, 2001. 

\bibitem{KMT} 
S.~Karigiannis, B.~McKay,  M.-P. Tsui. 
\newblock Soliton solutions for the {L}aplacian co-flow of some {$\G_2$}-structures with symmetry.
\newblock {\em Differential Geom.~Appl.} {\bf30}, 318--333, 2012.  
 
\bibitem{KL} 
A.~Kennon, J.~D.~Lotay. 
\newblock Geometric Flows of $G_2$-Structures on $3$-Sasakian $7$-Manifolds. 
\newblock {\em  J.~Geom.~Phys.} {\bf187}, 104793, 2023.   
 
\bibitem{LM}
H.~V. L\^e and M.~Munir.
\newblock Classification of compact homogeneous spaces with invariant {$\rm G_2$}-structures.
\newblock {\em Adv.~Geom.} {\bf12}, 302--328, 2012. 

\bibitem{LSS} 
J.~D.~Lotay, H.~N.~S\'a Earp, J. Saavedra. 
\newblock Flows of $G_2$-structures on contact Calabi-Yau 7-manifolds. 
\newblock {\em Ann.~Glob.~Anal.~Geom.} {\bf62}, 367--389, 2022. 


\bibitem{Mil}
J.~Milnor.
\newblock Curvatures of left invariant metrics on {L}ie groups.
\newblock {\em Adv.~Math.} {\bf21}, 293--329, 1976.

\bibitem{PS}
S. Picard, C. Suan. 
\newblock Flows of $G_2$-Structures associated to Calabi-Yau Manifolds. 
\newblock \href{https://arxiv.org/abs/2209.03411}{arXiv:2209.03411}. 

\bibitem{Reid}
F. Reidegeld.~Spaces admitting homogeneous $G_2$-structures.~{\em Differential Geom.~Appl.} {\bf 28}, 301--312,  2010. 

\bibitem{ScSc}
L.~Sch{{\"a}}fer, F. L. Schulte-Hengesbach.
\newblock Nearly pseudo-{K}{\"a}hler and nearly para-{K}{\"a}hler six-manifolds.
\newblock In {\em Handbook of pseudo-{R}iemannian geometry and supersymmetry},
  vol.~16 of {\em IRMA Lect.~Math.~Theor.~Phys.}, pp.~425--453. Eur.~Math.~Soc., Z{\"u}rich, 2010.

\bibitem{Schulte}
F. L. Schulte-Hengesbach.
\newblock Half-flat structures on Lie groups, PhD Thesis (2010), Hamburg.
\newblock  Available at \href{https://www.math.uni-hamburg.de/home/schulte-hengesbach/diss.pdf}{math.uni-hamburg.de/home/schulte-hengesbach/diss.pdf}.

\bibitem{Str} 
J. Streets.
\newblock Generalized geometry, T-duality, and renormalization group flow.
\newblock {\em J.~Geom.~Phys.} {\bf114}, 506--522, 2017. 



\end{thebibliography}
\end{document}